\pdfoutput=1
\documentclass[11pt]{article}
\usepackage[svgnames]{xcolor}
\usepackage{mathtools}
\usepackage[abbrev,lite,nobysame]{amsrefs}
\usepackage{amsthm,amsfonts,amsmath}
\usepackage{thmtools}
\usepackage{abstract}
\usepackage{hyphenat}
\hyphenation{magneto-hydrodynamics}
\usepackage{csquotes}
\usepackage{enumerate}
\usepackage{moresize}
\usepackage[font=footnotesize,labelfont=bf]{caption}
\usepackage{subcaption}
\usepackage[colorlinks=true,linkcolor=NavyBlue,
citecolor=NavyBlue,urlcolor=NavyBlue]{hyperref}
\usepackage[titles]{tocloft}

\usepackage{indentfirst}
\usepackage{titlesec}
\titlespacing*{\paragraph}{\parindent}{3.25ex plus 1ex minus .2ex}{5pt plus 1pt minus 1pt}

\usepackage[looser]{newtxtext}
\usepackage[bigdelims,varvw]{newtxmath}
\usepackage[protrusion=true, tracking=true, expansion=true, kerning=true,spacing=true]{microtype}
\usepackage{bm}

\usepackage{cleveref}
\crefname{equation}{}{}

\makeatletter
\AddToHook{cmd/appendix/before}{\def\cref@section@alias{appendix}}
\makeatother

\renewcommand{\leq}{\leqslant}

\renewcommand{\geq}{\geqslant}

\usepackage{tikz}
\usepgflibrary{patterns}
\usetikzlibrary{arrows.meta}


\numberwithin{equation}{section}

\theoremstyle{plain}
\newtheorem{thrm}{Theorem}[section]

\newtheorem{lmm}[thrm]{Lemma}
\newtheorem{crllr}[thrm]{Corollary}

\newtheorem{rmrk}[thrm]{Remark}
\theoremstyle{definition}
\newtheorem{dfntn}[thrm]{Definition}
\newtheorem{xmpl}[thrm]{Example}

\theoremstyle{plain}

\makeatletter
\newcommand*{\toccontents}{\@starttoc{toc}}
\makeatother

\newcommand\tfootnote[1]{%
	\begingroup
	\renewcommand\thefootnote{}\footnote{#1}%
	\addtocounter{footnote}{-1}%
	\endgroup
}

\begin{document}\linespread{1.05}\selectfont
	\date{}
	
	\author{Manuel~Rissel\,\footnote{Institute of Mathematical Sciences, ShanghaiTech University, Shanghai, 201210, China, e-mail: \href{mailto:mrissel@shanghaitech.edu.cn}{mrissel@shanghaitech.edu.cn}} \footnote{NYU-ECNU Institute of Mathematical Sciences at NYU Shanghai, 3663 Zhongshan Road North, Shanghai, 200062, China}}

	\title{Controllability of Boussinesq flows driven by finite-dimensional and physically localized forces}

	\maketitle
	
	\tfootnote{	{\bf MSC2020}: 35Q35, 35Q30, 93C10 (primary); 76B75, 80A19, 93B05 (secondary)}
	\tfootnote{	{\bf Keywords}:  Boussinesq system, incompressible fluids, approximate controllability, finite-dimensional controls, physically localized controls}
	
	\begin{abstract}

		We show approximate controllability of Boussinesq flows in $\mathbb{T}^2 = \mathbb{R}^2 / 2\pi\mathbb{Z}^2$ driven by finite-dimensional controls that are supported in any fixed region~$\omegaup \subset \mathbb{T}^2$. This addresses a Boussinesq version of a question by Agrachev and provides the first known example of incompressible fluids with this property. In this context, we complement results obtained for the Navier--Stokes system by Agrachev--Sarychev (Comm. Math. Phys. 265, 2006), where the controls are finite-dimensional but not localized in physical space, and Nersesyan--Rissel (Comm. Pure Appl. Math. 78, 2025), where physically localized controls admit for special $\omegaup$ a degenerate but not finite-dimensional structure.
		
		For our proof, we study controllability properties of tailored convection equations governed by time-periodic degenerately forced Euler flows that provide a twofold geometric mechanism: transport of information through $\omegaup$ versus non-stationary mixing effects transferring energy from low-dimensional sources to higher frequencies. The temperature is then controlled by using Coron's return method, while the velocity is mainly driven by the buoyant force.

		When $\omegaup$ contains two cuts of $\mathbb{T}^2$, our approach allows to effectively construct low-dimensional control spaces of dimensions that are independent of the choice of $\omegaup$ within this class of control regions.

	\end{abstract}
	\newpage
	\section*{Contents}
	\setcounter{tocdepth}{2}
	\toccontents 
	
	\section{Introduction}

	Let $\mathbb{T}^2=\mathbb{R}^2 / 2\pi\mathbb{Z}^2$ and $\emptyset \neq \omegaup \subset \mathbb{T}^2$ open. The objective of this work is to show that the Boussinesq system propagates degenerate classes of forces, which are finite-dimensional and physically localized in~$\omegaup$, to rich sets in the state space. Such properties are of independent interest and can be expressed using notions from control theory, but they are of relevance also beyond that scope; for instance, in the study of turbulence and other topics in mathematical physics (see~\cite{KuksinShirikyan2012,Agrachev2014,KuksinNersesyanShirikyan2020} and~\Cref{subsection:literature}). 
	Given any $T > 0$, we consider the velocity~$u$, temperature~$\theta$, and pressure~$p$
	describing the motion of an incompressible fluid with thermal buoyancy convection, driven by localized forces that belong at each time to the same finite-dimensional spaces. That is, $u\colon\mathbb{T}^2\times(0,T)\longrightarrow \mathbb{R}^2$ and $\theta, p\colon\mathbb{T}^2\times(0,T) \longrightarrow \mathbb{R}$ are sought to satisfy in $\mathbb{T}^2\times(0,T)$ the initial value problem
	\begin{equation}\label{equation:Boussinesq}
		\begin{gathered}
			\partial_t u - \nu \Delta u + \left(u \cdot \nabla\right) u + \nabla p = \theta e_2 + f + \mathbb{I}_{\omegaup} \xi, \\ \operatorname{div}(u) = 0, \\
			\partial_t \theta - \tau \Delta \theta + (u \cdot \nabla) \theta = g + \mathbb{I}_{\omegaup} \eta,\\
			u(\cdot, 0) = u_0, \quad \theta(
			\cdot, 0) = \theta_0,
		\end{gathered}
	\end{equation}
	where $\nu > 0$ and $\tau > 0$ specify the viscosity and thermal diffusivity,~$\mathbb{I}_{S}$ denotes the indicator function of a set $S$, the unit vector~$e_2 = (0, 1)$ points in the direction of gravity,~$u_0$ and~$\theta_0$ are the initial states,~$f$ and~$g$ are known forces, and~$\xi$ and~$\eta$ are to-be-determined controls of the particular type detailed below in \eqref{equation:controlrepresentation}. For further background on the Boussinesq system in general, which is among other subjects relevant to the study of geophysical phenomena and turbulent flows, we refer to~\cite{Majda2003,ConstantinDoering1999,FoiasManleyTemam1987}.

	As to be made precise in~\Cref{theorem:main2}, the meaning of approximate controllability of \eqref{equation:Boussinesq} can be sketched as follows. For any approximation accuracy $\varepsilon > 0$, time $T > 0$, initial- and target states $(u_0, \theta_0)$ and $(u_1, \theta_1)$, parameters $\nu, \tau > 0$, and forces $(f,g)$, there exist controls $(\xi, \eta)$ such that
	\[
		\|u(\cdot, T) - u_1\| + \|\theta(\cdot, T) - \theta_1\| < \varepsilon,
	\]
	where $\|\cdot\|$ denotes suitable norms. In particular, this is a global (large data) notion of controllability.
	
	To achieve this with finite-dimensional controls means that there are universal numbers $d_1, d_2 \in \mathbb{N}$ and functions $\xi_1, \dots, \xi_{d_1}\colon \mathbb{T}^2\longrightarrow\mathbb{R}^2$ and $\eta_1, \dots, \eta_{d_2}\colon \mathbb{T}^2\longrightarrow\mathbb{R}$, which depend on the fixed choice of $\omegaup$ but are independent of $\nu,\tau,\varepsilon$, and all data in \eqref{equation:Boussinesq}, such that $\xi$ and $\eta$ are for each $t \in [0,T]$ of the form
	\begin{equation}\label{equation:controlrepresentation}
		\begin{gathered}
			\xi(\cdot, t) = \alpha_1(t)\xi_1 + \dots + \alpha_{d_1}(t) \xi_{d_1}, \\
			\eta(\cdot, t) = \beta_1(t)\eta_1 + \dots + \beta_{d_2}(t) \eta_{d_2},
		\end{gathered}
	\end{equation}
	involving the control coefficients
	\[
		\alpha_1, \dots \alpha_{d_1}, \beta_1, \dots, \beta_{d_2} \colon [0,T]\longrightarrow\mathbb{R}.
	\]
	The numbers $d_1, d_2$ and profiles $\xi_1, \dots, \xi_{d_1}, \eta_1, \dots, \eta_{d_2}$ must remain unchanged when varying the viscosity, thermal diffusivity, approximation accuracy, initial- and target states, and prescribed body forces. Only the control coefficients $\alpha_1, \dots \alpha_{d_1}$ and $\beta_1, \dots, \beta_{d_2}$ in the representations of~$\xi$ and~$\eta$ can be chosen in dependence on the data in order to influence the final state of the solution to \eqref{equation:Boussinesq}. This translates to the goal of constructing universal finite-dimensional vector spaces $\mathscr{F}_{\mathscr{v}} \subset C^{\infty}(\mathbb{T}^2; \mathbb{R}^2)$ and $\mathscr{F}_{\mathscr{t}} \subset C^{\infty}(\mathbb{T}^2; \mathbb{R})$ of functions supported in~$\omegaup$ such that~\eqref{equation:Boussinesq} is approximately controllable in time $T > 0$ with controls that satisfy $\xi(\cdot, t) \in \mathscr{F}_{\mathscr{v}}$ and $\eta(\cdot, t) \in \mathscr{F}_{\mathscr{t}}$ for $t \in [0, T]$. 
	
	For the Navier--Stokes system on $\mathbb{T}^2$ driven by finite-dimensional but not physically localized forces, approximate controllability has been shown first in~\cite{AgrachevSarychev2005,AgrachevSarychev2006}. On the other hand, controllability of the Navier--Stokes and Boussinesq systems on $\mathbb{T}^2$ and $\mathbb{T}^3$ driven by physically localized but not finite-dimensional controls is known due to~\cite{CoronFursikov1996,FursikovImanuvilov1999}, where Coron's return method from \cite{Coron96,Coron1996EulerEq} is developed. However, the natural question whether approximate controllability holds with controls that are both finite-dimensional and physically localized constitutes an open problem, posed by Agrachev for the Navier--Stokes system ({\it c.f.}~\cite[Section 7]{Agrachev2014}). Also for other fluid models, corresponding versions of this question have remained unanswered. Here, we give a positive answer for the planar Boussinesq system \eqref{equation:Boussinesq}, which couples the Navier--Stokes equations for the velocity and a convection diffusion equation for the temperature.

	\subsection*{Notation}
	
	\paragraph*{Function spaces.} Given $m \in \mathbb{N}_0 \coloneq \mathbb{N}\cup\{0\}$, and writing $H_{\operatorname{avg}}$ for the $L^2(\mathbb{T}^2;\mathbb{R})$-functions with zero average, we denote the~$L^2$-based Sobolev spaces of~divergence-free vector fields and of zero average scalar functions
	\begin{gather*}
		H \coloneq \left\{ f \in H_{\operatorname{avg}}^2  \, \left| \right. \, \operatorname{div}(f) = 0 \mbox{ in } \mathbb{T}^2 \right\}, \quad
		V^m \coloneq H^m(\mathbb{T}^2;\mathbb{R}^2) \cap H, \\
		H^m \coloneq H^m(\mathbb{T}^2;\mathbb{R}) \cap H_{\operatorname{avg}},
	\end{gather*} 
	where $H^m$ and $V^m$ are endowed with the usual norms $\|\cdot\|_{m}$ of $H^m(\mathbb{T}^2;\mathbb{R})$ and $H^m(\mathbb{T}^2;\mathbb{R}^2)$ respectively. Further, we say that $f \in L^2((0,T); C^{\infty}(\mathbb{T}^2; \mathbb{R}^N))$, with $T > 0$ and $N \in \{1,2\}$, when $f \in L^2((0,T); H^m(\mathbb{T}^2; \mathbb{R}^N))$ for all $m \in \mathbb{N}$. Throughout, the Lebesgue measure is normalized such that $\int_{\mathbb{T}^2} \, dx = 1$.
	
	\paragraph*{Flow maps.} Let $T > 0$ and $v$ be a continuous map $\mathbb{T}^2\times[0,T] \longrightarrow \mathbb{R}^2$ that is Lipschitz continuous in the space variables with time-independent Lipschitz constant. Then, the Cauchy--Lipschitz theorem provides for each $x \in \mathbb{T}^2$ and $s \in [0, T]$ a unique solution $\Phi^v(x,s,\cdot) \colon [0,T] \longrightarrow \mathbb{T}^2$ to the initial value problem
	\begin{equation}\label{equation:flow}
		\begin{gathered}
			\frac{\rm d}{{\rm d}t} \Phi^v (x,s,t) = v(\Phi^v(x,s,t), t), \quad
			\Phi^v (x,s,s) = x.
		\end{gathered}
	\end{equation}
	We call $\Phi^v$ the flow of~$v$ and note that $\Phi^v(\Phi^v(x,s,r),r,t) = \Phi^v(x,s,t)$
	for all $x \in \mathbb{T}^2$ and $r,s,t \in [0,T]$. When $A \subset \mathbb{T}^2$ and $I, J \subset [0,T]$, we write $\Phi^v(A, I, J) \coloneq \{ \Phi^v(x, s, t) \, | \, x \in A, \, s \in I, \, t \in J \}$.
	
	\paragraph*{Div-curl problems.}For sufficiently regular~$U = (U_1,U_2)\colon\mathbb{T}^2 \longrightarrow \mathbb{R}^2$, the \enquote{curl} of~$U$ is defined as $\nabla \wedge U \coloneq \partial_1 U_2 - \partial_2 U_1$. Moreover, for $z \in H^m$, $m \in \mathbb{N}_0$, and $A \in \mathbb{R}^2$, we denote by $\Upsilon(z, A) \in H^{m+1}(\mathbb{T}^2;\mathbb{R}^2)$ the unique solution to the div-curl problem
	\begin{equation}\label{equation:Upsilon}
		\begin{gathered}
			\nabla \cdot \Upsilon(z, A) = 0, \quad 
			\nabla \wedge \Upsilon(z, A) = z
		\end{gathered}
	\end{equation}
	that satisfies $\int_{\mathbb{T}^2} \Upsilon(z, A)(x) \, dx = A$.
	One can express $\Upsilon(z, A) = \nabla^{\perp} \phi + A$, where the stream function $\phi$ solves Poisson's equation $\Delta \phi = - z$ in $\mathbb{T}^2$ and $\nabla^{\perp} \phi \coloneq (\partial_2 \phi, -\partial_1 \phi)$. When $A = 0$, we abbreviate $\Upsilon(z) = \Upsilon(z, 0)$.

	\subsection{Main results}\label{subsection:mainresults}
	The following Theorem is our main contribution. It provides quick approximate controllability for the temperature, while keeping the velocity close to its initial state. At this point, the data is assumed more regular than necessary; several assumptions will be relaxed subsequently due to the parabolic smoothing effects exhibited by~\eqref{equation:Boussinesq}. The full approximate controllability of the Boussinesq system is stated below in \Cref{theorem:main2}.

	\begin{thrm}\label{theorem:main}
		There are finite-dimensional spaces~$\mathscr{F}_{\mathscr{v}} \subset C^{\infty}(\mathbb{T}^2; \mathbb{R}^2)$ and~$\mathscr{F}_{\mathscr{t}} \subset C^{\infty}(\mathbb{T}^2; \mathbb{R})\cap H_{\operatorname{avg}}$ such that the following statement holds. For any given data
		\begin{gather*}
			\nu, \tau,  \varepsilon,  T > 0, \quad m \in \mathbb{N}, \quad
			u_0 \in V^{m+2}, \quad \theta_0, \theta_1 \in H^{m+2}, \\
			f \in L^2((0,T);V^{m}), \quad g \in L^2((0,T);H^{m}),
		\end{gather*}
		there exists~$\delta_0 > 0$ so that for each $\delta \in (0, \delta_0)$ there are $\xi \in L^2((0,\delta); \mathscr{F}_{\mathscr{v}})$ and $\eta \in L^2((0,\delta); \mathscr{F}_{\mathscr{t}})$ 
		for which the associated solution
		\[
		(u, \theta) \in C^0([0,\delta];H^{m}(\mathbb{T}^2;\mathbb{R}^2) \times H^m)\cap L^2((0,\delta);H^{m+1}(\mathbb{T}^2;\mathbb{R}^2) \times H^{m+1})
		\]
		to the Boussinesq problem \eqref{equation:Boussinesq} satisfies
		\[
		\|u(\cdot, \delta) - u_0\|_{m+1} + \|\theta(\cdot, \delta) - \theta_1\|_{m+1} < \varepsilon.
		\]
	\end{thrm}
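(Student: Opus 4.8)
The plan is to run Coron's return method for the temperature on a short interval $[0,\delta]$, in a regime where the full Boussinesq dynamics is a perturbation of a forced Euler--Boussinesq system. Rescaling time by $t\mapsto t/\delta$ (and the velocity and pressure amplitudes correspondingly) turns the dissipative terms $\nu\Delta u$, $\tau\Delta\theta$ and the prescribed forces $f,g$ into quantities that vanish with $\delta$ in the norms at hand; it therefore suffices to construct, for the inviscid problem, a \emph{return trajectory} $(\widehat u,\widehat\theta,\widehat p)$ on $[0,\delta]$ with $\widehat u(\cdot,0)=\widehat u(\cdot,\delta)=u_0$, $\widehat\theta(\cdot,0)=\theta_0$ and $\|\widehat\theta(\cdot,\delta)-\theta_1\|_{m+1}$ as small as desired, driven by controls $\widehat\xi(\cdot,t)\in\mathscr{F}_{\mathscr{v}}$ and $\widehat\eta(\cdot,t)\in\mathscr{F}_{\mathscr{t}}$, and then to show that feeding the same controls into \eqref{equation:Boussinesq} yields a solution that stays $O(\delta^{1/2})$-close to $(\widehat u,\widehat\theta)$ in $H^{m+1}$. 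Since $\varepsilon$ is fixed, choosing $\delta_0$ small makes this last error smaller than $\varepsilon/2$, while the inviscid maneuver accounts for the remaining $\varepsilon/2$.

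For the reference trajectory I would first fix a time-$1$-periodic flow $Y$ on $\mathbb{T}^2$ — conveniently described via its vorticity and the operator $\Upsilon$ of \eqref{equation:Upsilon} — that is \emph{flushing} in the sense that $\Phi^Y$ carries every $x\in\mathbb{T}^2$ into $\omegaup$ and keeps it there for a time bounded below uniformly in $x$, and that is sustained by an admissible force: as the abstract indicates, the flushing motion should come mainly from the buoyancy of a companion (large, time-periodic) temperature field, with a localized force $F(\cdot,t)\in\mathscr{F}_{\mathscr{v}}$ only correcting lower-order discrepancies, in particular the mean-velocity drift, which buoyancy cannot influence because temperatures here have zero average (hence two constant directions in $\mathscr{F}_{\mathscr{v}}$). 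Running this flow at speed $\delta^{-1}$ over $[0,\delta]$ reproduces the time-$1$ flow map of $Y$, so the flushing property persists; a lower-order correction enforces $\widehat u(\cdot,0)=\widehat u(\cdot,\delta)=u_0$ without spoiling $\Phi^{\widehat u}(\cdot,0,\delta)\approx\Phi^{Y}(\cdot,0,1)$. One must arrange, moreover, that $(\widehat u,\widehat\theta)$ solves the \emph{free} Euler--Boussinesq system outside $\omegaup$, so that $\widehat\xi$ and $\widehat\eta$ are genuinely supported in $\omegaup$.

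The heart of the matter is the temperature. Along the fixed flushing velocity $\widehat u$, and after discarding the $O(\delta)$ diffusion, $\widehat\theta$ solves a convection equation $\partial_t\widehat\theta+(\widehat u\cdot\nabla)\widehat\theta=\mathbb{I}_{\omegaup}\widehat\eta$, and I would prove — as the key lemma, isolating exactly the \enquote{tailored convection equations} alluded to above — that such equations are approximately controllable from $\theta_0$ to $\theta_1$ with $\widehat\eta(\cdot,t)\in\mathscr{F}_{\mathscr{t}}$. Two geometric effects cooperate: transport of information through $\omegaup$, i.e.\ because every fluid particle spends a definite amount of time in $\omegaup$, the localized control can essentially overwrite the Lagrangian profile of the solution everywhere along the flow; and a \emph{non-stationary mixing} effect, whereby the finitely many fixed profiles spanning $\mathscr{F}_{\mathscr{t}}$, once transported and repeatedly bracketed against the time-dependent transport field, approximate arbitrarily high Fourier modes — this is what upgrades a low-dimensional source to density in the state space. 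The part of $\widehat\theta$ responsible for reaching $\theta_1$ is of size $O(1)$ and its buoyancy influences $\widehat u$ only at order $\delta$, so this step does not conflict with the velocity construction.

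I expect the main obstacle to be precisely this temperature step: exhibiting a \emph{single, universal} pair of finite-dimensional spaces $(\mathscr{F}_{\mathscr{v}},\mathscr{F}_{\mathscr{t}})$, depending only on $\omegaup$, that simultaneously sustains a flushing time-periodic Euler--Boussinesq trajectory through forces supported in $\omegaup$ and is \enquote{saturating} for the attendant convection equation, with all estimates uniform in $\nu,\tau,\varepsilon$ and in the data. A secondary difficulty is quantitative: the return rescaling inflates the controls by powers of $\delta^{-1}$, so one must verify that the energy estimates for \eqref{equation:Boussinesq} in $V^m\times H^m$ — for which the extra regularity $u_0\in V^{m+2}$ and $\theta_0,\theta_1\in H^{m+2}$ is used — still close and produce the stated bound as $\delta\to0$.
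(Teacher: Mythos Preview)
Your high-level architecture is right: return method with a $\delta$-rescaling, a reference velocity $\overline{U}$ solving a controlled Euler problem, reduction of the temperature to a convection equation along $\overline{U}$, and a scaling-limit estimate showing the viscous problem tracks the inviscid construction. Two points, however, depart from what actually works in the paper, and the second is a real gap.

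First, a minor but conceptually important correction: the reference velocity $\overline{U}$ is \emph{not} sustained by buoyancy. In the paper's construction the velocity force $\xi$ is essentially $\delta^{-2}\overline{H}(\cdot,\delta^{-1}t)-\nu\delta^{-1}\Delta\overline{U}(\cdot,\delta^{-1}t)$, where $\overline{H}$ is the Euler force from \eqref{equation:ovp}; the temperature is kept at order $O(\delta)$ in the rescaled variables (see $\vartheta_\delta(\cdot,0)=\delta\theta_0$ in \eqref{equation:isll} and the hypothesis \eqref{equation:od}), so the buoyancy term $\partial_1\theta$ enters the vorticity equation only as a perturbation. The buoyancy mechanism you describe is used \emph{later}, in \Cref{theorem:main2}, to steer the velocity toward an arbitrary target---not to build the reference trajectory in \Cref{theorem:main}. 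With your proposed setup (large companion temperature driving $\overline{U}$), the temperature equation would be dominated by that large field, and it is unclear how you would simultaneously run the control maneuver $\theta_0\to\theta_1$.

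Second, and more seriously, your account of the key convection step does not identify the mechanism by which the control stays both physically localized \emph{and} finite-dimensional. Saying that localized profiles, ``once transported and repeatedly bracketed against the time-dependent transport field,'' saturate the state space describes the Agrachev--Sarychev picture for the \emph{nonlocalized} problem; it does not explain why the same works after multiplication by a cutoff. The paper's device is quite different and rather specific. One first fixes a generating field $\overline{u}^{\star}$ of \emph{small} uniform norm (so its flow cannot move points outside a fixed square), for which the nonlocalized convection problem $\partial_t v+(\overline{u}^{\star}\cdot\nabla)v=g^{\star}$ is controllable with $g^{\star}$ in a four-dimensional space $\mathscr{H}$. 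Then $\overline{U}$ is built to alternate between (a) rigidly translating a fixed square $\mathcal{O}_i$ into $\omegaup$ by a shift $S_i$, (b) acting as the localized field $\nabla^{\perp}[\chi\,\overline{\phi}^{\star}(\cdot-S_i,\cdot)]$, which coincides with $\overline{u}^{\star}(\cdot-S_i,\cdot)$ on the relevant set thanks to the smallness of $\overline{u}^{\star}$, and (c) translating back. The localized control is then simply $G=\mu\sum_i \mathbb{I}_{[t^i_a,t^i_b]}\,g^{\star}(\cdot-S_i,\cdot-t^i_a)$, and the proof of \Cref{theorem:locfinthm} establishes the exact rearrangement identity
\[
\int_0^{T^{\star}} g^{\star}(\Phi^{\overline{u}^{\star}}(x,T^{\star},s),s)\,ds \;=\; \int_0^{1} G(\Phi^{\overline{U}}(x,1,s),s)\,ds,
\]
so that $V(\cdot,1)=v(\cdot,T^{\star})$. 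The point is that $G$ is a \emph{product} of a fixed cutoff $\mu$ with finitely many \emph{rigid translates} of elements of $\mathscr{H}$---not compositions with a flow map---so its span has dimension at most $4M$ plus a few average-correction directions. Your proposal does not contain this rearrangement idea, and without it there is no argument that a universal finite-dimensional $\mathscr{F}_{\mathscr{t}}$ exists; indeed, the naive localization of an Agrachev--Sarychev saturation would require composing generators with flow maps, which is precisely what destroys finite-dimensionality in earlier work.
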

	The proof of \Cref{theorem:main} is organized as follows. In \Cref{section:knownfinre}, a controllability result for transport problems with generating drift is recalled. In \Cref{section:lct}, approximate controllability via finite-dimensional and physically localized forces is established for a specially constructed convection problem. The argument is completed in \Cref{section:conclusion}.
	
	As a corollary of \Cref{theorem:main}, we can conclude also the approximate controllability of both the temperature and the velocity in arbitrary time, and for less regular initial states. 
	
	\begin{crllr}\label{theorem:main2}
		Let the spaces~$\mathscr{F}_{\mathscr{v}} \subset C^{\infty}(\mathbb{T}^2; \mathbb{R}^2)$ and~$\mathscr{F}_{\mathscr{t}} \subset C^{\infty}(\mathbb{T}^2; \mathbb{R})\cap H_{\operatorname{avg}}$ be obtained via \Cref{theorem:main}. For any given $\varepsilon, \nu, \tau, T > 0$, $k \in \mathbb{N}_0$, $u_0 \in H$, $u_1 \in V^k$, $\theta_0 \in H_{\operatorname{avg}}$, $\theta_1 \in H^{k}$, $f \in L^2((0,T);V^{\max\{k-1,1\}})$, and $g \in L^2((0,T);H^{\max\{k-1,1\}})$,
		there exist controls $\xi \in L^2((0,T); \mathscr{F}_{\mathscr{v}})$ and $\eta \in L^2((0,T); \mathscr{F}_{\mathscr{t}})$ such that the solution
		\begin{gather*}
			u \in C^0((0,T];H^{\max\{k,2\}}(\mathbb{T}^2;\mathbb{R}^2))\cap L^2((0,T);H^{\max\{k+1,3\}}(\mathbb{T}^2;\mathbb{R}^2)),\\
			\theta \in C^0((0,T];H^{\max\{k,2\}})\cap L^2((0,T);H^{\max\{k+1,3\}})
		\end{gather*}
		to the Boussinesq problem \eqref{equation:Boussinesq}
		satisfies
		\[
		\|u(\cdot, T) - u_1\|_{k} + \|\theta(\cdot, T) - \theta_1\|_{k} < \varepsilon.
		\]
	\end{crllr}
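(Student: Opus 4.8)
The plan is to deduce \Cref{theorem:main2} from \Cref{theorem:main} together with the two scaling-limit convergences of \cite[Theorem 3.4]{NersesyanRissel2024} recalled above, arranged into the piecewise-in-time scheme of \cite{NersesyanRissel2024} but with the finite-dimensional and physically localized controls now supplied by \Cref{theorem:main}. First I would remove the low regularity of $(u_0,\theta_0) \in H \times H_{\operatorname{avg}}$: the solution of \eqref{equation:Boussinesq} issued from $t = 0$ with zero controls is a Leray--Hopf trajectory which, by the parabolic smoothing associated with forces $f \in L^2((0,T);V^{\max\{k-1,1\}})$ and $g \in L^2((0,T);H^{\max\{k-1,1\}})$, belongs to $V^{\max\{k,2\}+2} \times H^{\max\{k,2\}+2}$ at almost every positive time; fixing an arbitrarily small such time $t_0$ and restarting there, it suffices to treat initial data of the regularity required by \Cref{theorem:main}. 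Symmetrically, using well-posedness of the $2$D Boussinesq system, I would fix $\sigma \in (0, T-t_0)$ such that every zero-control trajectory started in the $(\varepsilon/2)$-ball about $(u_1,\theta_1)$ in $V^{\max\{k,2\}+1} \times H^{\max\{k,2\}+1}$ remains in the $\varepsilon$-ball about $(u_1,\theta_1)$ in $H^{k}(\mathbb{T}^2;\mathbb{R}^2) \times H^{k}$ throughout an interval of length $\sigma$; it then suffices to reach the former ball by time $T - \sigma$ and to switch off the controls on $[T-\sigma, T]$.

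On $[t_0, T-\sigma]$ I would pass to the vorticity $w = \nabla\wedge u$ and recover $u = \Upsilon(w, \overline{u})$ from $w$ and the spatial mean $\overline{u}$; since $f$ is divergence-free and $u_0 \in H$, $u_1 \in V^{k}$ both have zero mean, $\overline{u}$ stays equal to zero as soon as $\mathscr{F}_{\mathscr{v}}$ is chosen among zero-mean fields (and is a trivially controllable two-dimensional quantity in any case), so reaching the target reduces to steering $(w,\theta)$ near $(\nabla\wedge u_1, \theta_1)$ in $H^{\max\{k,2\}+1} \times H^{\max\{k,2\}+1}$. Because $\mathscr{E}$ contains $\pm\sin(x\cdot n)$ and $\pm\cos(x\cdot n)$ for all $n \in \mathbb{Z}^2\setminus\{0\}$, and $\partial_1$ is boundedly invertible on $\operatorname{span}_{\mathbb{R}}\mathscr{E}_0$, there exist $N \in \mathbb{N}$ and $q_0,\dots,q_{2N} \in \operatorname{span}_{\mathbb{R}}\mathscr{E}_0$ with
\[
\Big\| \nabla\wedge u_1 - \Big( w_{t_0} - q_0 - \sum_{i=1}^{2N}(\Upsilon(q_i)\cdot\nabla)q_i \Big) \Big\|_{\max\{k,2\}} < \frac{\varepsilon}{4},
\]
where $w_{t_0} = \nabla\wedge u(\cdot,t_0)$. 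Then I would alternate, on vanishing subintervals, two elementary moves: (a) a temperature spike $\theta \mapsto \theta - \delta^{-1}q$, which by the first convergence of \cite[Theorem 3.4]{NersesyanRissel2024} shifts the vorticity by $-\partial_1 q$ over time $\delta$ — used both to add $-q_0$ directly and to install and later remove the auxiliary spikes $\pm\delta^{-1/2}q_i$; and (b) starting from a vorticity $\widetilde{w} + \delta^{-1/2}q_i$, the second convergence of \cite[Theorem 3.4]{NersesyanRissel2024} yields at time $\delta$ a vorticity close to $\delta^{-1/2}q_i + \widetilde{w} - (\Upsilon(q_i)\cdot\nabla)q_i$ with essentially unchanged temperature, so that after stripping the residual $\delta^{-1/2}q_i$ via (a), the net effect is the addition of $-(\Upsilon(q_i)\cdot\nabla)q_i$ to the vorticity. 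Each move is as short and as accurate as desired, both convergences are uniform for $(f,g)$ in bounded subsets of $L^2((0,T);V^{\max\{k,2\}}\times H^{\max\{k,2\}})$, and interposing applications of \Cref{theorem:main} resets the temperature between moves; after finitely many moves the vorticity is $(\varepsilon/4)$-close to $\nabla\wedge u_1$, and a final application of \Cref{theorem:main} drives $\theta$ to within $\varepsilon/2$ of $\theta_1$ while perturbing $u$ by at most $\varepsilon/4$ more. Concatenating all controls (extended by zero outside the active subintervals) gives $\xi \in L^2((0,T);\mathscr{F}_{\mathscr{v}})$ and $\eta \in L^2((0,T);\mathscr{F}_{\mathscr{t}})$, and the asserted regularity of $(u,\theta)$ is the one furnished by \Cref{theorem:main} on the controlled pieces and by parabolic regularity elsewhere.

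The main obstacle is the middle step: one must verify that the two scaling regimes — a temperature of order $\delta^{-1}$ generating $-\partial_1 q$, and a vorticity of order $\delta^{-1/2}$ generating $-(\Upsilon(q)\cdot\nabla)q$ — can be iterated and interleaved with the exact-regularity control steps of \Cref{theorem:main} so that the accumulated perturbations of $u$ and $\theta$ do not destroy the uniform estimates, and that the whole finite sequence of moves fits inside $[t_0, T-\sigma]$. This is precisely the bookkeeping already executed in \cite{NersesyanRissel2024} for controls that are finite-dimensional but not localized; the only change here is that the control steps based on \Cref{theorem:main} now output functions supported in $\omegaup$ and spanning the fixed finite-dimensional spaces $\mathscr{F}_{\mathscr{v}}$ and $\mathscr{F}_{\mathscr{t}}$, so that argument carries over, and the dimension counts announced in the abstract are read off from the construction of $\mathscr{F}_{\mathscr{v}}$ and $\mathscr{F}_{\mathscr{t}}$ in the proof of \Cref{theorem:main}.
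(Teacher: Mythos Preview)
Your proposal is correct and follows essentially the same approach as the paper: parabolic regularization on an initial segment, a stability radius near the target, and then the piecewise scheme from \cite{NersesyanRissel2024} alternating the two scaling limits with temperature-reset steps furnished by \Cref{theorem:main} (the paper packages the latter as \Cref{theorem:msltc}). Two minor inaccuracies worth noting: the velocity mean is not identically zero during the control phases since the return-method force $\overline{H}$ can have nonzero spatial average, but it returns to zero at the end of each phase because $\int_0^1\!\int_{\mathbb{T}^2}\overline{H}\,dx\,ds=0$; and the controls in \cite{NersesyanRissel2024} are physically localized but not finite-dimensional, not the other way around.
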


	A sketch of the proof of \Cref{theorem:main2}, which will be presented in more detail in \Cref{subsection:prfmain2}, is as follows. 
	
	1) Let $m \in \mathbb{N}$, $f \in L^2((0,T);V^{m})$, and $g \in L^2((0,T);H^{m})$. By the well-posedness of the $2$D Boussinesq system, one can choose $\sigma > 0$ so small that, if an uncontrolled solution $(u, \theta)$ to \eqref{equation:Boussinesq} is issued at $t = t_0$ from the $\varepsilon/2$-neighborhood of $(u_1, \theta_1)$ in $V^{m+1} \times H^{m+1}$, then $(u, \theta)(\cdot, t)$ remains in the $\varepsilon$-neighborhood of $(u_1, \theta_1)$ in $H^{m+1}(\mathbb{T}^2;\mathbb{R}^2) \times H^{m+1}(\mathbb{T}^2;\mathbb{R})$ for all $t \in [t_0, t_0+\sigma]$. Thus, to control the system in any given time~$T > 0$ and with less regular initial states, one can first issue a trajectory of \eqref{equation:Boussinesq} in the Leray-Hopf weak sense from $(u_0, \theta_0) \in H\times H_{\operatorname{avg}}$ at time $t = 0$ with zero controls ($\xi = 0$, $\eta = 0$). By parabolic regularization effects, available due to the choice of forces $f \in L^2((0,T);V^{m})$ and $g \in L^2((0,T);H^{m})$, this trajectory will belong to $V^{m+2}\times H^{m+2}$ at almost all times $t > 0$. Then, starting from $t = T - \sigma$ one employs the actual control strategy; we refer also to the similar situations in \cite{Nersesyan2021,NersesyanRissel2024a,NersesyanRissel2024}.
	
	2) To steer also the velocity approximately to any given target, and not only the temperature, a mechanism from \cite{NersesyanRissel2024} can be applied, relying on several scaling limits and the fact that the set $\mathscr{E}$, defined as
	\begin{equation}\label{equation:E}
		\begin{gathered}
			\mathscr{E} \coloneq \left\{ q_0 + \left(\Upsilon(q_1) \cdot \nabla\right) q_1 + \left(\Upsilon(q_2) \cdot \nabla\right) q_2 \, \, | \, \, q_0,q_1,q_2\in \operatorname{span}_{\mathbb{R}}\mathscr{E}_0 \right\},\\
			\mathscr{E}_0 \coloneq \left\{\sin(x\cdot n), \, \cos(x \cdot n) \, \left| \right. \, n \in \mathbb{N}\times\mathbb{N}_0 \right\},
		\end{gathered}
	\end{equation}
	contains $\pm\sin(x\cdot n)$ and $\pm\cos(x\cdot n)$ for all $n\in \mathbb{Z}^2\setminus\{0\}$; {\it c.f.}~\cite{AgrachevSarychev2006} and also \cite[Lemma 3.5]{NersesyanRissel2024}.
	More precisely, given any~$q \in C^{\infty}(\mathbb{T}^2;\mathbb{R})$ with zero average, it is shown in \cite[Theorem 3.4]{NersesyanRissel2024} that
	\[
	\nabla \wedge u_{\delta}(\cdot, \delta) \longrightarrow \nabla \wedge u_0 - \partial_1 q \, \mbox{ in } \, H^{m} \, \mbox{ as } \, \delta \longrightarrow 0,
	\]
	where $(u_{\delta}, \theta_{\delta})$ solves \eqref{equation:Boussinesq} with zero controls~$(\xi, \eta) = (0,0)$, initial velocity~$u_0 \in H^{m+2}$, and initial temperature of the form $\theta_0 = - \delta^{-1} q$. In addition, the latter reference provides in $H^{m}\times H^{m+1}$ the convergence
	\[
	(\nabla \wedge  u_{\delta}, \theta_{\delta})(\cdot, \delta) - (\delta^{-1/2}q, 0) \longrightarrow (\widetilde{w}_0 - (\Upsilon(q) \cdot \nabla) q, \theta_0)  \, \mbox{ as } \, \delta \longrightarrow 0,
	\]
	where $(u_{\delta}, \theta_{\delta})$ is the solution to \eqref{equation:Boussinesq} 
	with zero controls~$(\xi, \eta) = (0,0)$, initial temperature $\theta_0 \in H^{m+2}$, and initial vorticity $\nabla \wedge u_0 = \widetilde{w}_0 + \delta^{-1/2} q$ for a given $\widetilde{w}_0 \in H^{m+1}$. Combining iterations of these two convergence results and \Cref{theorem:main}, one can steer $\nabla \wedge u$ arbitrarily fast, and as close as desired in $H^m$, to any finite sum of the form
	\[
		\widetilde{w}_0 - q_0 - \sum_{i=1}^{2N}(\Upsilon(q_i) \cdot \nabla) q_i,
	\]
	where $N \in \mathbb{N}$ and $q_0, q_1, \dots, q_{2N} \in \operatorname{span}_{\mathbb{R}}\mathscr{E}_0$. 
	Owing to the form of $\mathscr{E}$, this implies approximate controllability for the vorticity. As both convergences from \cite[Theorem 3.4]{NersesyanRissel2024} are uniform with respect to~$(f, g)$ from bounded subsets of $L^2((0, T); V^{m}\times H^m)$, one can as in \cite{NersesyanRissel2024} define in a piece-wise (in time) manner a suitably controlled trajectory. Due to \Cref{theorem:main}, the resulting controls are here finite-dimensional and physically localized.

	\begin{rmrk}\label{remark:nza}
		Our approach also works for initial- and target states of non-zero average, possibly requiring the addition of a two-dimensional space to $\mathscr{F}_{\mathscr{v}}$ and of a one-dimensional space to $\mathscr{F}_{\mathscr{t}}$. This is explained in \Cref{remark:nza2} in \Cref{subsection:prfmain2}. Velocity controls cannot be divergence-free in general, as this function class may leave the velocity average invariant. For instance, when~$\omegaup$ is simply-connected, Poincar\'e's lemma yields for divergence-free $\xi \in L^2(\mathbb{T}^2;\mathbb{R}^2)$ with $\operatorname{supp}(\xi) \subset \omegaup$ the existence of $\phi \in H^1(\mathbb{T}^2;\mathbb{R})$ such that $\xi = \nabla^{\perp} \phi$. Thus, if such $\xi$ is plugged into~\eqref{equation:Boussinesq}, one finds that
		\begin{equation*}
			\frac{d}{dt} \int_{\mathbb{T}^2} u \cdot e_1 \, dx =  \int_{\mathbb{T}^2} (\nu \Delta u \cdot e_1 - \left(u \cdot \nabla\right) u \cdot e_1 - \partial_1 p + \partial_2 \phi) \, dx = 0,
		\end{equation*}
		obstructing approximate controllability of~\eqref{equation:Boussinesq} when $\int_{\mathbb{T}^2} u_0 \cdot e_1 \, dx \neq \int_{\mathbb{T}^2} u_1 \cdot e_1 \, dx$ for $e_1 = (1,0)$.
		The controls obtained here are not divergence-free, which is in alignment with the existing literature on controllability properties of incompressible fluids driven by physically localized forces; see also the references in \Cref{subsection:literature}.
	\end{rmrk}

	\begin{rmrk}
		The controls in \Cref{theorem:main} and~\Cref{theorem:main2} can be chosen smooth in time, using a density argument and the stability of solutions to the Boussinesq system with respect to small perturbations of the forces.
	\end{rmrk}

	\subsection{Description of the approach}\label{subsection:appraoch}
	
	We start with a so-called \enquote{generating} vector field~$\overline{u}^{\star}$ that has small uniform norm (depending only on~$\omegaup$) and is constructed from an observable family as described in \Cref{definition:ovf}. This notion of observability, first introduced in \cite{KuksinNersesyanShirikyan2020} for the study of randomly forced PDEs, induces a certain type of non-stationary mixing effect, propagating energy added to the system by low-dimensional forces to higher frequencies. Choosing~$\overline{u}^{\star}$ of small norm ensures that its flow cannot transport information in a fixed time over large distances, which will be crucial for the definition of our localized controls. Then, having a linearized and inviscid version of the temperature equation from~\eqref{equation:Boussinesq} in mind, we consider on a small time interval $[0,T^{\star}]$ the transport problem
	\begin{equation}\label{equation:introlg}
		\partial_t v + (\overline{u}^{\star}\cdot \nabla)v = g^{\star}
	\end{equation}
	for which approximate controllability by means of low-dimensional controls~$g^{\star}$ without physical localization is known; since $g^{\star}$ can act everywhere in $\mathbb{T}^2$, the small uniform norm of $\overline{u}^{\star}$ is not an obstruction.
	Based on this, we construct a universal vector field $\overline{U}$, depending only on~$\omegaup$, such that approximate controllability also holds for
	\begin{equation}\label{equation:introtrp}
		\partial_t V + (\overline{U}\cdot \nabla)V = \mathbb{I}_{\omegaup}G
	\end{equation}
	with a finite-dimensional control $G$. Up to a few technical details omitted at this point, the force $G$ will on the reference time interval $[0,1]$ be given by
	\begin{equation}\label{intro:control}
		G(x,t) = \mu(x) \sum_{i=1}^M  \mathbb{I}_{[t^i_a, t^i_b]}(t) g^{\star}(x-S_i, t-t^i_a),
	\end{equation}
	where $g^{\star}$ is a low-dimensional control for \eqref{equation:introlg}, $M$ is a number depending only on the geometry of the control region, $\mu$ is a particular cutoff supported in~$\omegaup$, $[t^i_a, t^i_b]\subset(0,1)$ are disjoint time intervals of length $T^{\star}$ on which $\overline{U}$ will represent a physically localized version of $\overline{u}^{\star}(\cdot-S_i, \cdot - t^i_a)$, and $S_i \in \mathbb{R}^2$ are fixed shifts related to the convection mechanism that $\overline{U}$ provides for times outside~$[t^i_a, t^i_b]$. See also \Cref{Figure:Pattern}.
	
	The idea is, by a careful construction of~$\overline{U}$ involving some geometric considerations, to achieve the rearrangement (see \Cref{theorem:locfinthm})
	\begin{equation*}
		\int_0^{T^{\star}} g^{\star}(\Phi^{\overline{u}^{\star}}(x,T^{\star}, s), s) \, ds = \int_0^{1} G(\Phi^{\overline{U}}(x,1, s), s) \, ds,
	\end{equation*}
	where $\Phi^{\overline{u}^{\star}}$ and $\Phi^{\overline{U}}$ are the flows of the vector fields $\overline{u}^{\star}$ and $\overline{U}$, respectively. 
	\begin{figure}[ht!]
		\centering
		\resizebox{0.8\textwidth}{!}{
			\begin{tikzpicture}
				\clip(3.9,-36.1) rectangle (46.1,10.3);
				
				\coordinate[label=center:\HUGE{$\overline{u}^{\star}$ on $[0, T^{\star}]$}] (A) at (4.5+15.25,9.7);
				\coordinate[label=center:\HUGE{$g^{\star}$ on $[0, T^{\star}]$}] (B) at (14.5+15.25,9.7);
				
				\draw[line width=0.3mm, color=black] plot[smooth cycle] (0+15.25,0) rectangle+ (3,3);
				\draw[line width=0.3mm, color=black] plot[smooth cycle] (3+15.25,0) rectangle+ (3,3);
				\draw[line width=0.3mm, color=black] plot[smooth cycle] (6+15.25,0) rectangle+ (3,3);
				\draw[line width=0.3mm, color=black] plot[smooth cycle] (0+15.25,3) rectangle+ (3,3);
				
				\draw[line width=0.3mm, color=black] plot[smooth cycle] (6+15.25,3) rectangle+ (3,3);
				\draw[line width=0.3mm, color=black] plot[smooth cycle] (0+15.25,6) rectangle+ (3,3);
				\draw[line width=0.3mm, color=black] plot[smooth cycle] (3+15.25,6) rectangle+ (3,3);
				\draw[line width=0.3mm, color=black] plot[smooth cycle] (6+15.25,6) rectangle+ (3,3);
				
				\draw[line width=1.5mm, color=black] plot[smooth cycle] (3+15.25,3) rectangle+ (3,3);
				\node[font=\HUGE, anchor=center, text=DarkBlue] at (1.5+15.25,1.5) {\bf A};
				\node[font=\HUGE, anchor=center, text=DarkBlue] at (4.5+15.25,1.5) {\bf B};
				\node[font=\HUGE, anchor=center, text=DarkBlue] at (7.5+15.25,1.5) {\bf C};
				\node[font=\HUGE, anchor=center, text=DarkBlue] at (1.5+15.25,4.5) {\bf D};
				\node[font=\HUGE, anchor=center, text=DarkBlue] at (4.5+15.25,4.5) {\bf E};
				\node[font=\HUGE, anchor=center, text=DarkBlue] at (7.5+15.25,4.5) {\bf F};
				\node[font=\HUGE, anchor=center, text=DarkBlue] at (1.5+15.25,7.5) {\bf G};
				\node[font=\HUGE, anchor=center, text=DarkBlue] at (4.5+15.25,7.5) {\bf H};
				\node[font=\HUGE, anchor=center, text=DarkBlue] at (7.5+15.25,7.5) {\bf I};
				
				\draw[line width=0.3mm, color=black] plot[smooth cycle] (10+15.25,0) rectangle+ (3,3);
				\draw[line width=0.3mm, color=black] plot[smooth cycle] (13+15.25,0) rectangle+ (3,3);
				\draw[line width=0.3mm, color=black] plot[smooth cycle] (16+15.25,0) rectangle+ (3,3);
				\draw[line width=0.3mm, color=black] plot[smooth cycle] (10+15.25,3) rectangle+ (3,3);
				
				\draw[line width=0.3mm, color=black] plot[smooth cycle] (16+15.25,3) rectangle+ (3,3);
				\draw[line width=0.3mm, color=black] plot[smooth cycle] (10+15.25,6) rectangle+ (3,3);
				\draw[line width=0.3mm, color=black] plot[smooth cycle] (13+15.25,6) rectangle+ (3,3);
				\draw[line width=0.3mm, color=black] plot[smooth cycle] (16+15.25,6) rectangle+ (3,3);
				
				\draw[line width=1.5mm, color=black] plot[smooth cycle] (13+15.25,3) rectangle+ (3,3);
				
				\node[font=\HUGE, anchor=center, text=DarkBlue] at (11.5+15.25,1.5) {$\mathbf 1$};
				\node[font=\HUGE, anchor=center, text=DarkBlue] at (14.5+15.25,1.5) {$\mathbf 2$};
				\node[font=\HUGE, anchor=center, text=DarkBlue] at (17.5+15.25,1.5) {$\mathbf 3$};
				\node[font=\HUGE, anchor=center, text=DarkBlue] at (11.5+15.25,4.5) {$\mathbf 4$};
				\node[font=\HUGE, anchor=center, text=DarkBlue] at (14.5+15.25,4.5) {$\mathbf 5$};
				\node[font=\HUGE, anchor=center, text=DarkBlue] at (17.5+15.25,4.5) {$\mathbf 6$};
				\node[font=\HUGE, anchor=center, text=DarkBlue] at (11.5+15.25,7.5) {$\mathbf 7$};
				\node[font=\HUGE, anchor=center, text=DarkBlue] at (14.5+15.25,7.5) {$\mathbf 8$};
				\node[font=\HUGE, anchor=center, text=DarkBlue] at (17.5+15.25,7.5) {$\mathbf 9$};

				\coordinate[label=center:\HUGE{$\overline{U}$ on $[0, t^1_a]$}] (C) at (4.5+4,9.7-11);
				\coordinate[label=center:\HUGE{$G$ on $[0, t^1_a]$}] (D) at (14.5+4,9.7-11);
				
				\draw[line width=0.3mm, color=black] plot[smooth cycle] (0+4,-11) rectangle+ (3,3);
				\draw[line width=0.3mm, color=black] plot[smooth cycle] (3+4,-11) rectangle+ (3,3);
				\draw[line width=0.3mm, color=black] plot[smooth cycle] (6+4,-11) rectangle+ (3,3);
				\draw[line width=0.3mm, color=black] plot[smooth cycle] (0+4,3-11) rectangle+ (3,3);
				
				\draw[line width=0.3mm, color=black] plot[smooth cycle] (6+4,3-11) rectangle+ (3,3);
				\draw[line width=0.3mm, color=black] plot[smooth cycle] (0+4,6-11) rectangle+ (3,3);
				\draw[line width=0.3mm, color=black] plot[smooth cycle] (3+4,6-11) rectangle+ (3,3);
				\draw[line width=0.3mm, color=black] plot[smooth cycle] (6+4,6-11) rectangle+ (3,3);
				
				\draw[line width=1.5mm, color=black] plot[smooth cycle] (3+4,3-11) rectangle+ (3,3);
				
				\draw[line width=1.5mm,-{Stealth[length=5mm, width=7mm]}] (1.5+4,1.5-11) -- (4.5+4,4.5-11);

				\draw[line width=0.3mm, color=black] plot[smooth cycle] (10+4,0-11) rectangle+ (3,3);
				\draw[line width=0.3mm, color=black] plot[smooth cycle] (13+4,0-11) rectangle+ (3,3);
				\draw[line width=0.3mm, color=black] plot[smooth cycle] (16+4,0-11) rectangle+ (3,3);
				\draw[line width=0.3mm, color=black] plot[smooth cycle] (10+4,3-11) rectangle+ (3,3);
				
				\draw[line width=0.3mm, color=black] plot[smooth cycle] (16+4,3-11) rectangle+ (3,3);
				\draw[line width=0.3mm, color=black] plot[smooth cycle] (10+4,6-11) rectangle+ (3,3);
				\draw[line width=0.3mm, color=black] plot[smooth cycle] (13+4,6-11) rectangle+ (3,3);
				\draw[line width=0.3mm, color=black] plot[smooth cycle] (16+4,6-11) rectangle+ (3,3);
				
				\draw[line width=1.5mm, color=black] plot[smooth cycle] (13+4,3-11) rectangle+ (3,3);
				
				\node[font=\HUGE, anchor=center, text=DarkBlue] at (11.5+4,1.5-11) {$-$};
				\node[font=\HUGE, anchor=center, text=DarkBlue] at (14.5+4,1.5-11) {$-$};
				\node[font=\HUGE, anchor=center, text=DarkBlue] at (17.5+4,1.5-11) {$-$};
				\node[font=\HUGE, anchor=center, text=DarkBlue] at (11.5+4,4.5-11) {$-$};
				\node[font=\HUGE, anchor=center, text=DarkBlue] at (14.5+4,4.5-11) {$-$};
				\node[font=\HUGE, anchor=center, text=DarkBlue] at (17.5+4,4.5-11) {$-$};
				\node[font=\HUGE, anchor=center, text=DarkBlue] at (11.5+4,7.5-11) {$-$};
				\node[font=\HUGE, anchor=center, text=DarkBlue] at (14.5+4,7.5-11) {$-$};
				\node[font=\HUGE, anchor=center, text=DarkBlue] at (17.5+4,7.5-11) {$-$};

				\coordinate[label=center:\HUGE{$\overline{U}$ on $[t^{1}_a, t^{1}_b]$}] (E) at (4.5-4+30.5,9.7-11);
				\coordinate[label=center:\HUGE{$G$ on $[t^{1}_a, t^{1}_b]$}] (F) at (14.5-4+30.5,9.7-11);
				\draw[line width=0.3mm, color=black] plot[smooth cycle] (0-4+30.5,0-11) rectangle+ (3,3);
				\draw[line width=0.3mm, color=black] plot[smooth cycle] (3-4+30.5,0-11) rectangle+ (3,3);
				\draw[line width=0.3mm, color=black] plot[smooth cycle] (6-4+30.5,0-11) rectangle+ (3,3);
				\draw[line width=0.3mm, color=black] plot[smooth cycle] (0-4+30.5,3-11) rectangle+ (3,3);
				
				\draw[line width=0.3mm, color=black] plot[smooth cycle] (6-4+30.5,3-11) rectangle+ (3,3);
				\draw[line width=0.3mm, color=black] plot[smooth cycle] (0-4+30.5,6-11) rectangle+ (3,3);
				\draw[line width=0.3mm, color=black] plot[smooth cycle] (3-4+30.5,6-11) rectangle+ (3,3);
				\draw[line width=0.3mm, color=black] plot[smooth cycle] (6-4+30.5,6-11) rectangle+ (3,3);
				
				\draw[line width=1.5mm, color=black] plot[smooth cycle] (3-4+30.5,3-11) rectangle+ (3,3);
				
				\node[font=\HUGE, anchor=center, text=DarkBlue] at (1.5-4+30.5,1.5-11) {$-$};
				\node[font=\HUGE, anchor=center, text=DarkBlue] at (4.5-4+30.5,1.5-11) {$-$};
				\node[font=\HUGE, anchor=center, text=DarkBlue] at (7.5-4+30.5,1.5-11) {$-$};
				\node[font=\HUGE, anchor=center, text=DarkBlue] at (1.5-4+30.5,4.5-11) {$-$};
				\node[font=\HUGE, anchor=center, text=DarkBlue] at (4.5-4+30.5,4.5-11) {\bf A};
				\node[font=\HUGE, anchor=center, text=DarkBlue] at (7.5-4+30.5,4.5-11) {$-$};
				\node[font=\HUGE, anchor=center, text=DarkBlue] at (1.5-4+30.5,7.5-11) {$-$};
				\node[font=\HUGE, anchor=center, text=DarkBlue] at (4.5-4+30.5,7.5-11) {$-$};
				\node[font=\HUGE, anchor=center, text=DarkBlue] at (7.5-4+30.5,7.5-11) {$-$};
				
				\draw[line width=0.3mm, color=black] plot[smooth cycle] (10-4+30.5,0-11) rectangle+ (3,3);
				\draw[line width=0.3mm, color=black] plot[smooth cycle] (13-4+30.5,0-11) rectangle+ (3,3);
				\draw[line width=0.3mm, color=black] plot[smooth cycle] (16-4+30.5,0-11) rectangle+ (3,3);
				\draw[line width=0.3mm, color=black] plot[smooth cycle] (10-4+30.5,3-11) rectangle+ (3,3);
				
				\draw[line width=0.3mm, color=black] plot[smooth cycle] (16-4+30.5,3-11) rectangle+ (3,3);
				\draw[line width=0.3mm, color=black] plot[smooth cycle] (10-4+30.5,6-11) rectangle+ (3,3);
				\draw[line width=0.3mm, color=black] plot[smooth cycle] (13-4+30.5,6-11) rectangle+ (3,3);
				\draw[line width=0.3mm, color=black] plot[smooth cycle] (16-4+30.5,6-11) rectangle+ (3,3);
				
				\draw[line width=1.5mm, color=black] plot[smooth cycle] (13-4+30.5,3-11) rectangle+ (3,3);
				
				\node[font=\HUGE, anchor=center, text=DarkBlue] at (11.5-4+30.5,1.5-11) {$-$};
				\node[font=\HUGE, anchor=center, text=DarkBlue] at (14.5-4+30.5,1.5-11) {$-$};
				\node[font=\HUGE, anchor=center, text=DarkBlue] at (17.5-4+30.5,1.5-11) {$-$};
				\node[font=\HUGE, anchor=center, text=DarkBlue] at (11.5-4+30.5,4.5-11) {$-$};
				\node[font=\HUGE, anchor=center, text=DarkBlue] at (14.5-4+30.5,4.5-11) {$\mathbf 1$};
				\node[font=\HUGE, anchor=center, text=DarkBlue] at (17.5-4+30.5,4.5-11) {$-$};
				\node[font=\HUGE, anchor=center, text=DarkBlue] at (11.5-4+30.5,7.5-11) {$-$};
				\node[font=\HUGE, anchor=center, text=DarkBlue] at (14.5-4+30.5,7.5-11) {$-$};
				\node[font=\HUGE, anchor=center, text=DarkBlue] at (17.5-4+30.5,7.5-11) {$-$};

				\coordinate[label=center:\HUGE{$\overline{U}$ on $[t^{1}_b, t^{2}_a]$}] (G) at (4.5+4,9.7-22);
				\coordinate[label=center:\HUGE{$G$ on $[t^{1}_b, t^{2}_a]$}] (H) at (14.5+4,9.7-22);
				
				\draw[line width=0.3mm, color=black] plot[smooth cycle] (0+4,-22) rectangle+ (3,3);
				\draw[line width=0.3mm, color=black] plot[smooth cycle] (3+4,-22) rectangle+ (3,3);
				\draw[line width=0.3mm, color=black] plot[smooth cycle] (6+4,-22) rectangle+ (3,3);
				\draw[line width=0.3mm, color=black] plot[smooth cycle] (0+4,3-22) rectangle+ (3,3);
				\draw[line width=0.3mm, color=black] plot[smooth cycle] (6+4,3-22) rectangle+ (3,3);
				\draw[line width=0.3mm, color=black] plot[smooth cycle] (0+4,6-22) rectangle+ (3,3);
				\draw[line width=0.3mm, color=black] plot[smooth cycle] (3+4,6-22) rectangle+ (3,3);
				\draw[line width=0.3mm, color=black] plot[smooth cycle] (6+4,6-22) rectangle+ (3,3);
				
				\draw[line width=1.5mm, color=black] plot[smooth cycle] (3+4,3-22) rectangle+ (3,3);
				
				\draw[line width=1.5mm,-{Stealth[length=5mm, width=7mm]}]  (4.5+4,1.5-22) -- (4.5+4,4.5-22);

				\draw[line width=0.3mm, color=black] plot[smooth cycle] (10+4,0-22) rectangle+ (3,3);
				\draw[line width=0.3mm, color=black] plot[smooth cycle] (13+4,0-22) rectangle+ (3,3);
				\draw[line width=0.3mm, color=black] plot[smooth cycle] (16+4,0-22) rectangle+ (3,3);
				\draw[line width=0.3mm, color=black] plot[smooth cycle] (10+4,3-22) rectangle+ (3,3);
				
				\draw[line width=0.3mm, color=black] plot[smooth cycle] (16+4,3-22) rectangle+ (3,3);
				\draw[line width=0.3mm, color=black] plot[smooth cycle] (10+4,6-22) rectangle+ (3,3);
				\draw[line width=0.3mm, color=black] plot[smooth cycle] (13+4,6-22) rectangle+ (3,3);
				\draw[line width=0.3mm, color=black] plot[smooth cycle] (16+4,6-22) rectangle+ (3,3);
				
				\draw[line width=1.5mm, color=black] plot[smooth cycle] (13+4,3-22) rectangle+ (3,3);
				
				\node[font=\HUGE, anchor=center, text=DarkBlue] at (11.5+4,1.5-22) {$-$};
				\node[font=\HUGE, anchor=center, text=DarkBlue] at (14.5+4,1.5-22) {$-$};
				\node[font=\HUGE, anchor=center, text=DarkBlue] at (17.5+4,1.5-22) {$-$};
				\node[font=\HUGE, anchor=center, text=DarkBlue] at (11.5+4,4.5-22) {$-$};
				\node[font=\HUGE, anchor=center, text=DarkBlue] at (14.5+4,4.5-22) {$-$};
				\node[font=\HUGE, anchor=center, text=DarkBlue] at (17.5+4,4.5-22) {$-$};
				\node[font=\HUGE, anchor=center, text=DarkBlue] at (11.5+4,7.5-22) {$-$};
				\node[font=\HUGE, anchor=center, text=DarkBlue] at (14.5+4,7.5-22) {$-$};
				\node[font=\HUGE, anchor=center, text=DarkBlue] at (17.5+4,7.5-22) {$-$};

				\coordinate[label=center:\HUGE{$\overline{U}$ on $[t^{2}_a, t^{2}_b]$}] (I) at (4.5-4+30.5,9.7-22);
				\coordinate[label=center:\HUGE{$G$ on $[t^{2}_a, t^{2}_b]$}] (J) at (14.5-4+30.5,9.7-22);
				\draw[line width=0.3mm, color=black] plot[smooth cycle] (0-4+30.5,0-22) rectangle+ (3,3);
				\draw[line width=0.3mm, color=black] plot[smooth cycle] (3-4+30.5,0-22) rectangle+ (3,3);
				\draw[line width=0.3mm, color=black] plot[smooth cycle] (6-4+30.5,0-22) rectangle+ (3,3);
				\draw[line width=0.3mm, color=black] plot[smooth cycle] (0-4+30.5,3-22) rectangle+ (3,3);
				
				\draw[line width=0.3mm, color=black] plot[smooth cycle] (6-4+30.5,3-22) rectangle+ (3,3);
				\draw[line width=0.3mm, color=black] plot[smooth cycle] (0-4+30.5,6-22) rectangle+ (3,3);
				\draw[line width=0.3mm, color=black] plot[smooth cycle] (3-4+30.5,6-22) rectangle+ (3,3);
				\draw[line width=0.3mm, color=black] plot[smooth cycle] (6-4+30.5,6-22) rectangle+ (3,3);
				
				\draw[line width=1.5mm, color=black] plot[smooth cycle] (3-4+30.5,3-22) rectangle+ (3,3);
				\node[font=\HUGE, anchor=center, text=DarkBlue] at (1.5-4+30.5,1.5-22) {$-$};
				\node[font=\HUGE, anchor=center, text=DarkBlue] at (4.5-4+30.5,1.5-22) {$-$};
				\node[font=\HUGE, anchor=center, text=DarkBlue] at (7.5-4+30.5,1.5-22) {$-$};
				\node[font=\HUGE, anchor=center, text=DarkBlue] at (1.5-4+30.5,4.5-22) {$-$};
				\node[font=\HUGE, anchor=center, text=DarkBlue] at (4.5-4+30.5,4.5-22) {\bf B};
				\node[font=\HUGE, anchor=center, text=DarkBlue] at (7.5-4+30.5,4.5-22) {$-$};
				\node[font=\HUGE, anchor=center, text=DarkBlue] at (1.5-4+30.5,7.5-22) {$-$};
				\node[font=\HUGE, anchor=center, text=DarkBlue] at (4.5-4+30.5,7.5-22) {$-$};
				\node[font=\HUGE, anchor=center, text=DarkBlue] at (7.5-4+30.5,7.5-22) {$-$};
				
				\draw[line width=0.3mm, color=black] plot[smooth cycle] (10-4+30.5,0-22) rectangle+ (3,3);
				\draw[line width=0.3mm, color=black] plot[smooth cycle] (13-4+30.5,0-22) rectangle+ (3,3);
				\draw[line width=0.3mm, color=black] plot[smooth cycle] (16-4+30.5,0-22) rectangle+ (3,3);
				\draw[line width=0.3mm, color=black] plot[smooth cycle] (10-4+30.5,3-22) rectangle+ (3,3);
				
				\draw[line width=0.3mm, color=black] plot[smooth cycle] (16-4+30.5,3-22) rectangle+ (3,3);
				\draw[line width=0.3mm, color=black] plot[smooth cycle] (10-4+30.5,6-22) rectangle+ (3,3);
				\draw[line width=0.3mm, color=black] plot[smooth cycle] (13-4+30.5,6-22) rectangle+ (3,3);
				\draw[line width=0.3mm, color=black] plot[smooth cycle] (16-4+30.5,6-22) rectangle+ (3,3);
				
				\draw[line width=1.5mm, color=black] plot[smooth cycle] (13-4+30.5,3-22) rectangle+ (3,3);
				
				\node[font=\HUGE, anchor=center, text=DarkBlue] at (11.5-4+30.5,1.5-22) {$-$};
				\node[font=\HUGE, anchor=center, text=DarkBlue] at (14.5-4+30.5,1.5-22) {$-$};
				\node[font=\HUGE, anchor=center, text=DarkBlue] at (17.5-4+30.5,1.5-22) {$-$};
				\node[font=\HUGE, anchor=center, text=DarkBlue] at (11.5-4+30.5,4.5-22) {$-$};
				\node[font=\HUGE, anchor=center, text=DarkBlue] at (14.5-4+30.5,4.5-22) {$\mathbf 2$};
				\node[font=\HUGE, anchor=center, text=DarkBlue] at (17.5-4+30.5,4.5-22) {$-$};
				\node[font=\HUGE, anchor=center, text=DarkBlue] at (11.5-4+30.5,7.5-22) {$-$};
				\node[font=\HUGE, anchor=center, text=DarkBlue] at (14.5-4+30.5,7.5-22) {$-$};
				\node[font=\HUGE, anchor=center, text=DarkBlue] at (17.5-4+30.5,7.5-22) {$-$};

				\coordinate[label=center:\HUGE{$\overline{U}$ on $[t^{8}_b, t^{9}_a]$}] (G) at (4.5+4,9.7-36);
				\coordinate[label=center:\HUGE{$G$ on $[t^{8}_b, t^{9}_a]$}] (H) at (14.5+4,9.7-36);
				
				\draw[line width=0.3mm, color=black] plot[smooth cycle] (0+4,-36) rectangle+ (3,3);
				\draw[line width=0.3mm, color=black] plot[smooth cycle] (3+4,-36) rectangle+ (3,3);
				\draw[line width=0.3mm, color=black] plot[smooth cycle] (6+4,-36) rectangle+ (3,3);
				\draw[line width=0.3mm, color=black] plot[smooth cycle] (0+4,3-36) rectangle+ (3,3);
				\draw[line width=0.3mm, color=black] plot[smooth cycle] (6+4,3-36) rectangle+ (3,3);
				\draw[line width=0.3mm, color=black] plot[smooth cycle] (0+4,6-36) rectangle+ (3,3);
				\draw[line width=0.3mm, color=black] plot[smooth cycle] (3+4,6-36) rectangle+ (3,3);
				\draw[line width=0.3mm, color=black] plot[smooth cycle] (6+4,6-36) rectangle+ (3,3);
				
				\draw[line width=1.5mm, color=black] plot[smooth cycle] (3+4,3-36) rectangle+ (3,3);

				\draw[line width=1.5mm,-{Stealth[length=5mm, width=7mm]}]  (7.5+4,7.5-36) -- (4.5+4,4.5-36);

				\draw[line width=0.3mm, color=black] plot[smooth cycle] (10+4,0-36) rectangle+ (3,3);
				\draw[line width=0.3mm, color=black] plot[smooth cycle] (13+4,0-36) rectangle+ (3,3);
				\draw[line width=0.3mm, color=black] plot[smooth cycle] (16+4,0-36) rectangle+ (3,3);
				\draw[line width=0.3mm, color=black] plot[smooth cycle] (10+4,3-36) rectangle+ (3,3);
				
				\draw[line width=0.3mm, color=black] plot[smooth cycle] (16+4,3-36) rectangle+ (3,3);
				\draw[line width=0.3mm, color=black] plot[smooth cycle] (10+4,6-36) rectangle+ (3,3);
				\draw[line width=0.3mm, color=black] plot[smooth cycle] (13+4,6-36) rectangle+ (3,3);
				\draw[line width=0.3mm, color=black] plot[smooth cycle] (16+4,6-36) rectangle+ (3,3);
				
				\draw[line width=1.5mm, color=black] plot[smooth cycle] (13+4,3-36) rectangle+ (3,3);
				
				\node[font=\HUGE, anchor=center, text=DarkBlue] at (11.5+4,1.5-36) {$-$};
				\node[font=\HUGE, anchor=center, text=DarkBlue] at (14.5+4,1.5-36) {$-$};
				\node[font=\HUGE, anchor=center, text=DarkBlue] at (17.5+4,1.5-36) {$-$};
				\node[font=\HUGE, anchor=center, text=DarkBlue] at (11.5+4,4.5-36) {$-$};
				\node[font=\HUGE, anchor=center, text=DarkBlue] at (14.5+4,4.5-36) {$-$};
				\node[font=\HUGE, anchor=center, text=DarkBlue] at (17.5+4,4.5-36) {$-$};
				\node[font=\HUGE, anchor=center, text=DarkBlue] at (11.5+4,7.5-36) {$-$};
				\node[font=\HUGE, anchor=center, text=DarkBlue] at (14.5+4,7.5-36) {$-$};
				\node[font=\HUGE, anchor=center, text=DarkBlue] at (17.5+4,7.5-36) {$-$};

				\coordinate[label=center:\HUGE{$\overline{U}$ on $[t^{9}_a, t^{9}_b]$}] (I) at (4.5-4+30.5,9.7-36);
				\coordinate[label=center:\HUGE{$G$ on $[t^{9}_a, t^{9}_b]$}] (J) at (14.5-4+30.5,9.7-36);
				\draw[line width=0.3mm, color=black] plot[smooth cycle] (0-4+30.5,0-36) rectangle+ (3,3);
				\draw[line width=0.3mm, color=black] plot[smooth cycle] (3-4+30.5,0-36) rectangle+ (3,3);
				\draw[line width=0.3mm, color=black] plot[smooth cycle] (6-4+30.5,0-36) rectangle+ (3,3);
				\draw[line width=0.3mm, color=black] plot[smooth cycle] (0-4+30.5,3-36) rectangle+ (3,3);
				
				\draw[line width=0.3mm, color=black] plot[smooth cycle] (6-4+30.5,3-36) rectangle+ (3,3);
				\draw[line width=0.3mm, color=black] plot[smooth cycle] (0-4+30.5,6-36) rectangle+ (3,3);
				\draw[line width=0.3mm, color=black] plot[smooth cycle] (3-4+30.5,6-36) rectangle+ (3,3);
				\draw[line width=0.3mm, color=black] plot[smooth cycle] (6-4+30.5,6-36) rectangle+ (3,3);
				
				\draw[line width=1.5mm, color=black] plot[smooth cycle] (3-4+30.5,3-36) rectangle+ (3,3);
				\node[font=\HUGE, anchor=center, text=DarkBlue] at (1.5-4+30.5,1.5-36) {$-$};
				\node[font=\HUGE, anchor=center, text=DarkBlue] at (4.5-4+30.5,1.5-36) {$-$};
				\node[font=\HUGE, anchor=center, text=DarkBlue] at (7.5-4+30.5,1.5-36) {$-$};
				\node[font=\HUGE, anchor=center, text=DarkBlue] at (1.5-4+30.5,4.5-36) {$-$};
				\node[font=\HUGE, anchor=center, text=DarkBlue] at (4.5-4+30.5,4.5-36) {\bf I};
				\node[font=\HUGE, anchor=center, text=DarkBlue] at (7.5-4+30.5,4.5-36) {$-$};
				\node[font=\HUGE, anchor=center, text=DarkBlue] at (1.5-4+30.5,7.5-36) {$-$};
				\node[font=\HUGE, anchor=center, text=DarkBlue] at (4.5-4+30.5,7.5-36) {$-$};
				\node[font=\HUGE, anchor=center, text=DarkBlue] at (7.5-4+30.5,7.5-36) {$-$};
				
				\draw[line width=0.3mm, color=black] plot[smooth cycle] (10-4+30.5,0-36) rectangle+ (3,3);
				\draw[line width=0.3mm, color=black] plot[smooth cycle] (13-4+30.5,0-36) rectangle+ (3,3);
				\draw[line width=0.3mm, color=black] plot[smooth cycle] (16-4+30.5,0-36) rectangle+ (3,3);
				\draw[line width=0.3mm, color=black] plot[smooth cycle] (10-4+30.5,3-36) rectangle+ (3,3);
				
				\draw[line width=0.3mm, color=black] plot[smooth cycle] (16-4+30.5,3-36) rectangle+ (3,3);
				\draw[line width=0.3mm, color=black] plot[smooth cycle] (10-4+30.5,6-36) rectangle+ (3,3);
				\draw[line width=0.3mm, color=black] plot[smooth cycle] (13-4+30.5,6-36) rectangle+ (3,3);
				\draw[line width=0.3mm, color=black] plot[smooth cycle] (16-4+30.5,6-36) rectangle+ (3,3);
				
				\draw[line width=1.5mm, color=black] plot[smooth cycle] (13-4+30.5,3-36) rectangle+ (3,3);
				
				\node[font=\HUGE, anchor=center, text=DarkBlue] at (11.5-4+30.5,1.5-36) {$-$};
				\node[font=\HUGE, anchor=center, text=DarkBlue] at (14.5-4+30.5,1.5-36) {$-$};
				\node[font=\HUGE, anchor=center, text=DarkBlue] at (17.5-4+30.5,1.5-36) {$-$};
				\node[font=\HUGE, anchor=center, text=DarkBlue] at (11.5-4+30.5,4.5-36) {$-$};
				\node[font=\HUGE, anchor=center, text=DarkBlue] at (14.5-4+30.5,4.5-36) {$\mathbf 9$};
				\node[font=\HUGE, anchor=center, text=DarkBlue] at (17.5-4+30.5,4.5-36) {$-$};
				\node[font=\HUGE, anchor=center, text=DarkBlue] at (11.5-4+30.5,7.5-36) {$-$};
				\node[font=\HUGE, anchor=center, text=DarkBlue] at (14.5-4+30.5,7.5-36) {$-$};
				\node[font=\HUGE, anchor=center, text=DarkBlue] at (17.5-4+30.5,7.5-36) {$-$};
				
				\filldraw[line width=0.3mm] (9.5+15.25,9.7-33) circle (0.2);
				\filldraw[line width=0.3mm] (9.5+15.25,9.7-34) circle (0.2);
				\filldraw[line width=0.3mm] (9.5+15.25,9.7-35) circle (0.2);
			\end{tikzpicture}
		}
		\caption{The behavior of~$\overline{u}^{\star}$ and $g^{\star}$ from~\eqref{equation:introlg} on the time interval $[0, T^{\star}]$ is schematically denoted for a subdivision of~$\mathbb{T}^2$ by letters from A to I and numbers from $1$ to $9$, respectively. It is then indicated how $G$ and $\overline{U}$ from \eqref{equation:introtrp} are obtained with the help of $g^{\star}$ and $\overline{u}^{\star}$, and arrows indicate how information is propagated along $\overline{U}$ into the control zone (center square). In regions marked with \enquote{$-$}, the flow or controls are inactive at the given time. For example, the values of $\overline{U}$ on the time interval \smash{$[t^1_a, t^1_b]$} marked with \enquote{A} in the center region of the domain correspond to the values of \smash{$\overline{u}^{\star}$}  on the time interval $[0, T^{\star}]$ marked with \enquote{A} in the bottom-left region of the domain.}
		\label{Figure:Pattern}
	\end{figure}
	Next, through a hydrodynamic scaling limit, which is commonly part of the return method in the context of incompressible fluids ({\it c.f.}~\cite[Chapter 6]{Coron2007} and \cite{Coron1996EulerEq}), our result for \eqref{equation:introtrp} yields finite-dimensional and physically localized controls that steer the temperature approximately to any target, while the velocity is kept near its initial state. To this end, the profile~$\overline{U}$ will be used as a reference trajectory (in the return method sense), satisfying an incompressible Euler system driven by a finite-dimensional and physically localized force. In addition, $\overline{U}$ has to encode certain non-stationary mixing effects, like those provided by $\overline{u}^{\star}$, to guarantee approximate controllability of \eqref{equation:introtrp} by finite-dimensional forces. But, to localize the controls, $\overline{U}$ should also behave like a gradient flow in the complement of $\omegaup$, transporting information into the control region.

	Compared to the Navier--Stokes equations, we can exploit here the buoyant force to steer the velocity indirectly. To control the velocity directly by using the return method, that is, to obtain a version of \Cref{theorem:ctl} with arbitrary target states for the vorticity, we would need to obtain finite-dimensional and physically localized controls $G$ not for \eqref{equation:introtrp}, but instead for the following convection problem with stretching term:
	\begin{equation}\label{equation:intstr}
		\partial_t V + (\overline{U}\cdot \nabla)V + (\Upsilon(V) \cdot \nabla) \nabla \wedge \overline{U} = \mathbb{I}_{\omegaup}G,
	\end{equation}
	where $\Upsilon$ is the inverse div-curl operator defined in \eqref{equation:Upsilon}.

	\subsection{Explicit representations of \texorpdfstring{$\mathscr{F}_{\mathscr{v}}$ and $\mathscr{F}_{\mathscr{t}}$ for a class of regions $\omegaup$}{the control spaces for a special case}}\label{subsection:ce}
	Our proofs of \Cref{theorem:main} and \Cref{theorem:main2} provide explicit control spaces of universal dimension for a the particular class of control regions $\omegaup$ that render $\mathbb{T}^2\setminus\omegaup$ simply-connected. Following the explanations in Remarks~\ref{remark:exampley} and~\ref{remark:exampley2}, as provided in Sections~\ref{section:lct} and~\ref{section:conclusion}, one can see that the space $\mathscr{F}_{\mathscr{v}}$ of smooth functions $\mathbb{T}^2\longrightarrow \mathbb{R}^2$ may be spanned, for instance, by
	\begin{gather*}
		\begin{aligned}
			&\Lambda, && \Sigma, & (e_k \cdot \nabla)\nabla^{\perp}[\chi s_l],  && (e_k \cdot \nabla)\nabla^{\perp}[\chi c_l], && \nabla^{\perp}[\chi s_l], &&
			\nabla^{\perp}[\chi c_l],\\
		\end{aligned}\\
		\begin{aligned}
			& \Delta \nabla^{\perp}[\chi s_l], && (\nabla^{\perp}[\chi s_k] \cdot \nabla)\nabla^{\perp}[\chi s_l], & (\nabla^{\perp}[\chi s_k] \cdot \nabla)\nabla^{\perp}[\chi c_l],\\
			& \Delta \nabla^{\perp}[\chi c_l], && (\nabla^{\perp}[\chi c_k] \cdot \nabla)\nabla^{\perp}[\chi s_l], & (\nabla^{\perp}[\chi c_k] \cdot \nabla)\nabla^{\perp}[\chi c_l]
		\end{aligned}
	\end{gather*}
	with indices $k, l \in \{1,2\}$, while the space $\mathscr{F}_{\mathscr{t}}$ of smooth zero average functions $\mathbb{T}^2\longrightarrow \mathbb{R}$ may be spanned by
	\begin{gather*}
		\begin{aligned}
				& \mu s_l - \mu\frac{\int_{\mathbb{T}^2}\mu(x) s_l(x) \, dx}{\int_{\mathbb{T}^2} \mu(x) \, dx}, && 
			\mu c_l - \mu\frac{\int_{\mathbb{T}^2}\mu(x) c_l(x) \, dx}{\int_{\mathbb{T}^2} \mu(x) \, dx},
		\end{aligned}\\
		\begin{aligned}
			&(\nabla^{\perp}[\chi s_l]\cdot\nabla)\mu, 
			&& (\nabla^{\perp}[\chi c_l]\cdot\nabla)\mu, &
			& (e_1\cdot\nabla)\mu, && (e_2\cdot\nabla)\mu,
		\end{aligned}
	\end{gather*}
	with indices $l \in \{1,2\}$, and where the yet undefined objects appearing in the representations above are specified as follows.
	\begin{enumerate}[$\bullet$]
		\item $s_l(x) = \sin(x_l)$ and $c_l(x) = \cos(x_l)$ for $l \in \{1,2\}$ and $x = (x_1,x_2) \in \mathbb{T}^2$.
		\item $\mu, \chi \in C^{\infty}(\mathbb{T}^2; \mathbb{R})$ are as defined in \Cref{subsection:convection}, solely depending on $\omegaup$ and satisfying $\operatorname{supp}(\mu)\cup\operatorname{supp}(\chi)\subset \omegaup$. See \Cref{example:excp2} for a concrete choice.
		\item The profiles $\Lambda, \Sigma \in C^{\infty}(\mathbb{T}^2; \mathbb{R}^2)$ are curl-free, have linearly independent averages, and their support is contained in $\omegaup$ (see also \cite{NersesyanRissel2024a}).
	\end{enumerate}

	Adding another suitable dimension (for instance, $\operatorname{span} \mu$) to $\mathscr{F}_{\mathscr{t}}$ would allow to control also the temperature average. Utilizing $\Lambda$ and $\Sigma$, one can adapt the explanations in \Cref{remark:nza2} to control \eqref{equation:Boussinesq} between states of different average without adding more dimensions to~$\mathscr{F}_{\mathscr{v}}$.
	
	When $\omegaup$ is arbitrary, our constructions are explicit up to \Cref{lemma:rtc}, which is proved in \cite[Lemma 5.1]{FursikovImanuvilov1999} by a contradiction argument.

	\subsection{Literature}\label{subsection:literature}
	
	The approximate controllability of Navier--Stokes and Euler systems driven by finite-dimensional but not physically localized controls has been established for the $2$D periodic setting via geometric control techniques in~\cite{AgrachevSarychev2006,AgrachevSarychev2005}. This nonlinear approach is known as the Agrachev--Sarychev method. The aforementioned works are further concerned with questions on the controllability of Galerkin approximations~and on the controllability in finite-dimensional projections (see also~\cite{AgrachevSarychev2008}); for an earlier and different result on the controllability of Galerkin approximations, we refer to~\cite{LionsZuazua1998}. Refinements of the Agrachev--Sarychev method and extensions to $3$D settings have been developed subsequently, for instance, in the articles \cite{Shirikyan2006,Shirikyan2007,Nersesyan2021,Nersisyan2011}. These ideas principally extend to other domains, provided that certain saturation properties can be verified. However, this has been done only for special configurations \cite{Shirikyan2007,AgrachevSarychev2008,Shirikyan2006,Rodrigues2007}; for example, when orthonormal bases of trigonometric functions or spherical harmonics are available. See also \cite{Rodrigues2006,PhanRodrigues2019} for $2$D and $3$D rectangular domains under imposition of a slip boundary condition. We also mention that an illustration of the Agrachev--Sarychev method for the example of a $1$D Burgers equation is provided by \cite{Shirikyan2018}, and that Lagrangian and trajectorial controllability have been studied in \cite{Nersesyan2015} for finite-dimensional controls which may act only through few components of the Navier--Stokes system posed on the $3$D torus. A negative result regarding the exact controllability of an incompressible Euler problem driven by finite-dimensional controls is obtained in \cite{Shirikyan2008} by a comparison argument for the Kolmogorov $\varepsilon$-entropy, underscoring that approximate controllability is an appropriate notion in this context. Let us also point to several applications of the Agrachev--Sarychev approach to Schr\"odinger equations, {\it e.g.}, in \cite{Sarychev2012,CoronXiangZhang2023,DucaNersesyan2025}. Further, we emphasize that controllability via finite-dimensional controls is related to the study of ergodicity and other properties of systems driven by degenerate noise; {\it e.g.}, see \cite{MattinglyPardoux2006,HairerMattingly2006} and the recent works \cite{KuksinNersesyanShirikyan2020,BoulvardGaoNersesyan2023,Nersesyan2024,FoldersHerzog2024}, noting that \cite{Nersesyan2024} treats Fourier-localized noise multiplied by a cutoff supported in an arbitrary region of the physical space.

	Recently, physically localized controls of a specific degenerate structure have been obtained in \cite{NersesyanRissel2024a} for the $2$D incompressible Navier--Stokes system with periodic boundary conditions. The there-described approach, which already refers to the notion of observable families and the return method, subsequently inspired a first (global) approximate controllability result for the Boussinesq system driven only by a temperature control~\cite{NersesyanRissel2024}. In both references, the controls are not finite-dimensional but admit explicit representations involving only a finite number of control coefficients. Moreover, in \cite{NersesyanRissel2024a}, the control region must contain two cuts $\mathcal{C}_1$ and $\mathcal{C}_2$ rendering $\mathbb{T}^2\setminus(\mathcal{C}_1 \cup \mathcal{C}_1)$ simply-connected, while in \cite{NersesyanRissel2024} the control region must contain a strip that cuts the torus into two doubly-connected pieces. Contrasting these works, we obtain now truly finite-dimensional controls that are physically localized. Furthermore, we do not impose any restriction on the nonempty open subset of~$\mathbb{T}^2$ containing the support of the controls. To this end, our strategy relies on several new elements. For example, physically localized controls are not obtained like in \cite{NersesyanRissel2024a,NersesyanRissel2024} by composing low-dimensional forces with certain flow maps that spoil their finite-dimensional nature. Instead, we introduce the tailored transport problem~\eqref{equation:introtrp}, which can be controlled directly by a finite-dimensional and physically localized force that is patched together from shifted versions of a frequency-localized control. The existence of such convection problems that are in addition compatible with the return method is an essential new ingredient established here. To achieve this, we carefully construct~the convection profile~$\overline{U}$ in~\eqref{equation:introtrp}, serving as a return method trajectory and encoding a combination of several geometric properties preventing it from being chosen constant with respect to the space variables. This highlights another difference to the aforementioned studies, which rely on spatially constant return method trajectories, for instance, to handle a stretching term as in \eqref{equation:intstr} or to drive the Boussinesq system only through the temperature.

	Regarding less degenerate physically localized distributed- or boundary controls for the Navier--Stokes system and related models, there is a vast body of literature and many questions are actively studied. Attention is often paid not only to approximate- but also to exact controllability properties; for instance, exact controllability to zero (null controllability) or to trajectories. For problems without diffusion, {\it e.g.}, the Euler equations, exact controllability to any target in the state space might be possible (see \cite{Coron1996EulerEq,Glass2000}). This distinction of exact controllability notions is related to the problem of understanding the reachable space; for more background in this direction, we point to the recent work \cite{ErvedozaEtal2022} and the references therein. Aiming for local (small data) results, many authors have invoked linearization techniques and developed Carleman estimates for associated linear problems, leading via local inversion theorems to local exact controllability properties; for instance, \cite{FursikovImanuvilov1998, Guerrero2006,Fernandez-CaraGuerreroImanuvilovPuel2004}, and \cite{CoronLissy2014,Fernandez-CaraGuerreroImanuvilovPuel2006} for controls acting only in few components. By involving the return method, which exploits the nonlineary of the considered system, global controllability results, related to questions posed by J.-L. Lions in the 1980s-1990s ({\it c.f.}~\cite{LionsJL1991}), have been obtained for the incompressible Euler and Navier--Stokes equations \cite{Glass2000,Coron1996EulerEq,Coron96,CoronFursikov1996,CoronMarbachSueurZhang2019,CoronMarbachSueur2020,LiaoSueurZhang2022,LiaoSueurZhang2022b} and other models like the Boussinesq system and magnetohydrodynamics \cite{Chaves-SilvaEtal2023,FursikovImanuvilov1999,Rissel2024, RisselWang2024,Liao2024,KukavicaNovackVicol2022}, to name only a few. In these situations, the main issue is usually to prove approximate controllability, which is a global notion of controllability that may be combined with a local result to achieve global exact controllability to zero or to trajectories. In particular, the study \cite{CoronMarbachSueur2020} resolves in both $2$D and $3$D a Navier slip version of J.-L. Lions' famous open problem on the approximate controllability of the Navier--Stokes system. Namely, the authors impose Navier slip-with-friction boundary conditions instead of the no-slip boundary condition; similar findings for the Boussinesq system have been obtained in \cite{Chaves-SilvaEtal2023}, and \cite{Rissel2024} demonstrates approximate controllability only through the temperature for a Boussinesq system in a planar channel with thermally insulated physical boundaries on two sides along which the fluid can slip. Concerning the no-slip boundary condition, \cite{CoronMarbachSueurZhang2019} establishes global controllability of the Navier--Stokes system in a rectangular region under the addition of a small phantom force (see \cite{LiaoSueurZhang2022b} for curved boundaries). We mention also recent achievements of small-time local stabilization~for the planar Navier--Stokes system in \cite{Shengquan2023} and small-time global stabilization for~the viscous Burgers equation with three scalar controls in \cite{CoronShengquan2021}, referring to the bibliographies of these articles for further references on stabilization problems.

	\section{Transport equations with generating drift}\label{section:knownfinre}
	
	In this section, we recall a recent result from \cite{NersesyanRissel2024a} on the approximate controllability of transport problems with convection along generating drifts; the argument goes back to \cite{Nersesyan2021} for a $3$D linearized Euler problem, based on a notion of observable families from \cite{KuksinNersesyanShirikyan2020}. 
	
	To begin with, let $\mathcal{K} \subset \mathbb{Z}^2 \setminus \{0\}$ be a finite set with $\operatorfont{span}_{\mathbb{Z}}(\mathcal{K}) = \mathbb{Z}^2$ and consider the space
	\begin{equation}\label{equation:defH}
		\mathscr{H} \coloneq \mathscr{H}(\mathcal{K}) \coloneq \operatorname{span} \left\{ s_{\ell}, c_{\ell} \, \left| \right. \, \ell \in \mathcal{K} \right\},
	\end{equation}
	where
	\begin{gather*}
		s_{\ell}(x) \coloneq \sin( \ell \cdot x), \quad
		c_{\ell}(x) \coloneq \cos( \ell \cdot x)
	\end{gather*}
	for $x \in \mathbb{T}^2$ and $\ell \in \mathcal{K}$. 
	Then, observable families are defined as in \cite{Nersesyan2021}, providing a stronger version of the concept introduced in \cite[Definition 4.1]{KuksinNersesyanShirikyan2020}.
	
	\begin{dfntn}\label{definition:observable}
		Given $T > 0$ and $N \in \mathbb{N}$, a family $(\phi_j)_{j \in \{1,\dots,N\}} \subset L^2((0,T);\mathbb{R})$ is called observable if, for all $J \subset (0,T)$,~$b \in C^0(J;\mathbb{R})$, and~$(a_j)_{j \in \{1,\dots,N\}} \subset C^1(J; \mathbb{R})$ it holds
		\begin{gather*}
			b + \sum_{j=1}^N a_j \phi_j = 0 \, \mbox{ in } L^2(J;\mathbb{R})
			\iff 
			b = a_1 = \cdots = a_N = 0.
		\end{gather*}
	\end{dfntn}
	
	Observable families are known to exist; for instance, see \cite[Section 3.3]{Nersesyan2021}. We call a divergence-free vector field generating if it can be constructed in the below-described manner from an observable family.
	\begin{dfntn}\label{definition:ovf}
		Let $T > 0$. We say that $\overline{u}\in W^{1,2}((0,T);C^{\infty}(\mathbb{T}^2;\mathbb{R}^2))$ is generating, if it has the form
		\begin{equation}\label{equation:ObservableVF}
			\begin{gathered}
				\overline{u}(x, t) = \kappa\sum_{\ell \in \mathcal{K}} \left(\psi_{\ell}^{\operatorname{s}}(t) s_{\ell}(x) \ell^{\perp} + \psi_{\ell}^{\operatorname{c}}(t) c_{\ell}(x) \ell^{\perp} \right)\\
			\end{gathered}
		\end{equation}
		for $(x,t) \in \mathbb{T}^2\times[0,T]$, where
		\begin{gather*}
			\psi_{\ell}^{\operatorname{s}}(t) \coloneq \phi(t) \int_0^t \phi_{\ell}^{\operatorname{s}}(r) \, dr, \quad
			\psi_{\ell}^{\operatorname{c}}(t) \coloneq \phi(t) \int_0^t \phi_{\ell}^{\operatorname{c}}(r) \, dr
		\end{gather*}
		and
		\begin{enumerate}[$\bullet$]
			\item $(\phi_{\ell}^{\operatorname{s}},\phi_{\ell}^{\operatorname{c}})_{\ell \in \mathcal{K}} \subset L^2((0,T); \mathbb{R})$ is an observable family,
			\item $\phi \in C^1([0,T];\mathbb{R})$ obeys $\phi(t) = 0$ if an only if $t = T$,
			\item $\ell^{\perp} \coloneq (-\ell_2, \ell_1)$ for any $\ell = (\ell_1, \ell_2) \in \mathcal{K}$,
			\item $\kappa \in \mathbb{R}\setminus\{0\}$.
		\end{enumerate}
	\end{dfntn}
	
	\begin{rmrk}\label{remark:kappa}
		It is convenient for us to keep the parameter~$\kappa$ in \Cref{definition:ovf}, despite its redundancy. In particular, given $(\phi_{\ell}^{\operatorname{s}},\phi_{\ell}^{\operatorname{c}})_{\ell \in \mathcal{K}}$, $\phi$, and any $R > 0$, a compactness argument allows to ensure $\max_{(x,t)\in \mathbb{T}^2\times[0,T]}|\overline{u}(x,t)| < R$ by appropriately choosing~$\kappa$.
	\end{rmrk}

	The next result, which is known from \cite{NersesyanRissel2024a} and goes back to \cite{Nersesyan2021}, demonstrates a certain mixing effect that propagates energy from low frequency (Fourier-localized) sources to higher frequencies.
	\begin{lmm}{\cite[Theorem 2.6]{NersesyanRissel2024a}}\label{lemma:lineareverywheresupported_cpam}
		Let $T > 0$ and $\overline{u} \in W^{1,2}((0,T);C^{\infty}(\mathbb{T}^2;\mathbb{R}^2))$ be generating. Given $m \in \mathbb{N}$, $v_{1} \in H^m$, and $\varepsilon > 0$, there exists a control $\zeta \in L^2((0,T); \mathscr{H})$ such that the solution to the transport problem $\partial_t v + (\overline{u} \cdot \nabla) v = \zeta$ with initial condition $v(\cdot, 0) = 0$ satisfies $\|v(\cdot, T)-v_{1}\|_{m} < \varepsilon$.
	\end{lmm}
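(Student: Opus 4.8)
\medskip

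The plan is to exploit the explicit structure of a generating field \eqref{equation:ObservableVF} together with the observability property of its time coefficients in order to identify the closure of the reachable set via a duality/density argument. First I would write the solution of the transport problem $\partial_t v + (\overline{u}\cdot\nabla)v = \zeta$, $v(\cdot,0)=0$, via the characteristics $\Phi^{\overline{u}}$ from \eqref{equation:flow}: namely $v(x,T) = \int_0^T \zeta(\Phi^{\overline{u}}(x,T,s),s)\,ds$. Because $\overline{u}\in W^{1,2}((0,T);C^\infty)$ is Lipschitz in space with time-independent constant, the flow is well defined and the map $\zeta\mapsto v(\cdot,T)$ is linear and bounded from $L^2((0,T);\mathscr H)$ into $H^m$; let $\mathcal R_T\subset H^m$ denote its image. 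The goal is to show $\overline{\mathcal R_T} = H^m$ (in the $H^m$-topology), which is exactly the asserted approximate controllability.

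\medskip

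Next, I would argue by contradiction: if $\overline{\mathcal R_T}\neq H^m$, then by Hahn--Banach there is a nonzero $w\in H^m$ (identify the dual of $H^m$ with a Sobolev space of the same type via the $L^2$-pairing on $\mathbb T^2$) that annihilates $\mathcal R_T$. Testing against $\zeta(\cdot,t) = \phi^{\operatorname{s}}_\ell(t)\,s_\ell$, $\zeta(\cdot,t)=\phi^{\operatorname{c}}_\ell(t)\,c_\ell$, and more generally against $\zeta(\cdot,t)=a(t)h$ with $h\in\mathscr H$, $a\in C^1$, one gets that
\[
t\longmapsto \left\langle w, \, h\big(\Phi^{\overline u}(\cdot,T,t)\big)\right\rangle_{L^2(\mathbb T^2)}
\]
must vanish for a.e.\ $t$, for every $h$ ranging over the spanning set $\{s_\ell,c_\ell\}_{\ell\in\mathcal K}$ of $\mathscr H$. (This uses that products $a(t)h(x)$ are dense enough in $L^2((0,T);\mathscr H)$, which is immediate since $\mathscr H$ is finite-dimensional.) The heart of the matter is then a ladder/propagation argument: one differentiates this identity in $t$, using that $\frac{d}{dt}\big[h(\Phi^{\overline u}(x,T,t))\big]$ involves $\overline u(\cdot,t)\cdot\nabla h$ evaluated along the flow, and $\overline u$ itself is a known combination \eqref{equation:ObservableVF} of the $s_\ell\ell^\perp, c_\ell\ell^\perp$ with the time profiles $\psi^{\operatorname{s}}_\ell,\psi^{\operatorname{c}}_\ell$. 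Carrying out the differentiation produces an identity of the form $\sum_j (\text{smooth function of }t)\cdot \phi_j(t) + (\text{smooth function of }t) = 0$; the observability of $(\phi^{\operatorname{s}}_\ell,\phi^{\operatorname{c}}_\ell)_{\ell\in\mathcal K}$ in the sense of Definition~\ref{definition:observable} forces each coefficient to vanish separately. Iterating, and invoking at $t=T$ the vanishing of $\phi$ together with $\operatorname{span}_{\mathbb Z}\mathcal K=\mathbb Z^2$ (so that brackets of the vector fields $s_\ell\ell^\perp,c_\ell\ell^\perp$ generate, via the $L^2$-orthonormal basis of trigonometric functions, all Fourier modes — the saturation-type input), one concludes $\langle w, e^{ik\cdot x}\rangle = 0$ for every $k\in\mathbb Z^2$, hence $w=0$, a contradiction. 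This reproduces the scheme of \cite[Theorem 2.6]{NersesyanRissel2024a}, itself going back to \cite{Nersesyan2021,KuksinNersesyanShirikyan2020}.

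\medskip

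The main obstacle I anticipate is the bookkeeping in the inductive propagation step: one must set up the right finite-dimensional space of trigonometric functions that is left invariant (after including all the relevant brackets) by the operations $h\mapsto \ell^\perp\cdot\nabla h$, verify that the ``boundary term'' $b(t)$ and the ``observable coefficients'' $a_j(t)$ at each stage are genuinely in $C^0$ and $C^1$ respectively so that Definition~\ref{definition:observable} applies, and then show the resulting algebra of modes exhausts $\mathbb Z^2$ — this is precisely where $\operatorname{span}_{\mathbb Z}\mathcal K=\mathbb Z^2$ enters and is the analogue of the Agrachev--Sarychev saturation property. A secondary technical point is justifying the termwise differentiation in $t$ of $t\mapsto \langle w, h(\Phi^{\overline u}(\cdot,T,t))\rangle$ and the exchange of the $x$-integral with $t$-differentiation; this rests on the $W^{1,2}$-in-time, $C^\infty$-in-space regularity of $\overline u$ (hence $C^1$-in-$t$ regularity of the flow after one integration) and on elliptic/energy estimates in $H^m$ for the transport equation, all of which are standard but need to be stated carefully. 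Since a complete proof is given in \cite[Theorem 2.6]{NersesyanRissel2024a}, I would present the above as a recollection and refer there for the full details.
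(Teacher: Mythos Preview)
Your proposal is correct and matches the paper's treatment: the paper does not give its own proof but simply refers to \cite[Theorem 2.6]{NersesyanRissel2024a}, and your sketch accurately recovers the duality/characteristics/observability/saturation scheme from that reference (and its antecedents \cite{Nersesyan2021,KuksinNersesyanShirikyan2020}). One small imprecision worth tightening in your write-up: the first $t$-differentiation of $\langle w,h(\Phi^{\overline u}(\cdot,T,t))\rangle$ produces coefficients $\psi_\ell^{\operatorname{s}},\psi_\ell^{\operatorname{c}}$ rather than the observable family $\phi_\ell^{\operatorname{s}},\phi_\ell^{\operatorname{c}}$ directly; the latter appear only after a further differentiation (using $\psi_\ell = \phi\cdot\int_0^\cdot\phi_\ell$), and the condition $\phi(T)=0$ is what allows you to evaluate the resulting vanishing identities cleanly at $t=T$ and launch the induction over Fourier modes.
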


	\section{Finite-dimensional and physically localized transport controls}\label{section:lct}
	The goal of this section is to establish approximate controllability of a transport equation driven by physically localized and finite-dimensional controls. This will be possible for a special convection profile $\overline{U}$, introduced in \Cref{subsection:defbU} after several preliminary constructions of auxiliary profiles $\overline{y}$ and $\overline{u}^{\star}$. The main controllability result of this section is then stated in~\Cref{theorem:locfinthm}.
	

	\subsection{Finite-dimensional flushing profile \texorpdfstring{$\overline{y}$}{}}\label{subsection:convection}
	We construct a non-stationary vector field~$\overline{y}$, belonging at each time to a finite-dimensional subspace of $C^{\infty}(\mathbb{T}^2;\mathbb{R}^2)$ and constituting a time-periodic solution to an incompressible Euler system driven by finite-dimensional and physically localized controls (see~\Cref{lemma:retf}).
	
	\subsubsection{Partition of unity}
	For a given length $L = L_{\omegaup} > 0$, we denote by $(\mathcal{O}_i^L)_{i\in\{1,\dots,M^L\}}$ an open covering of $\mathbb{T}^2$ by overlapping squares of side-length~$L$ and with bottom left corners $(o_i^L)_{i \in \{1,\dots,M^L\}} \subset \mathbb{T}^2$. Moreover, $(\mu_i^L)_{i \in \{1,\dots,M^L\}} \subset C^{\infty}(\mathbb{T}^2; [0,1])$ is a subordinate partition of unity:
	\begin{gather*}
		\forall i \in \{1,\dots,M^L\}\colon \operatorname{supp}(\mu_i^L) \subset \mathcal{O}_i, \quad
		\sum_{i=1}^{M^L} \mu_i^L = 1. 
	\end{gather*}
	As demonstrated by \Cref{example:excp2} below, these choices can be made in agreement with the following additional properties.
	\begin{enumerate}[$\bullet$]
		\item There exists~$\mathcal{O}^L \subset \mathbb{T}^2$ with $\smash{\overline{\mathcal{O}^L}} \subset \omegaup$ so that $\mathcal{O}^L = \mathcal{O}_i^L + S_i^L \coloneq \{x + S_i^L \, | \, x \in \mathcal{O}_i^L\}$ for translation vectors $S_i^L \in \mathbb{R}^2$ and $i \in \{1,\dots,M^L\}$.
		\item There exists~$\mu^L \in C^{\infty}(\mathbb{T}^2; [0,1])$ with $\mu^L_i(\cdot) = \mu^L(\cdot + S_i^L)$ for each $i \in \{1,\dots,M^L\}$.
	\end{enumerate}
	
	We then introduce a cutoff $\chi^L \in C^{\infty}(\mathbb{T}^2; \mathbb{R})$ which satisfies $\operatorname{supp}(\chi^L) \subset \omegaup$ and $\chi^L = 1$ on a neighborhood of $\overline{\mathcal{O}^L}$. Furthermore, the reference time interval~$[0,1]$ is partitioned with equidistant spacing $T^{\star,L} \coloneq 1/(3M^L+2)$ by means of
	\begin{multline*}\label{equation:eqpartition}
		0 < t^{0,L}_c < t^{1,L}_a < t^{1,L}_b < t^{1,L}_c < \dots < t^{M^L,L}_a < 	t^{M^L,L}_b < t^{M^L,L}_c < 1.
	\end{multline*}
	
	\paragraph*{Simplified notations.} When the dependence on~$L = L_{\omegaup}$ is clear, we drop the superscript \enquote{$L$} from the notations and write $M$, $\mathcal{O}_i$, $\mathcal{O}$, $S_i$, $o_i$, $\mu_i$, $\mu$, $\chi$, $t^0_c$, $t^1_a$, $t^1_b$, $t^1_c$, \dots, $t^M_a$, $t^M_b$, $t^M_c$, $T^{\star}$.
	
		\begin{figure}[ht!]
		\centering
		\resizebox{0.55\textwidth}{!}{
			\begin{tikzpicture}
				\clip(-0.1-0.1,-0.1-0.1) rectangle (4.8+0.1,4.8+0.1);

				\draw[line width=0.2mm, color=black] plot[smooth cycle] (0,0) rectangle (1,1);
				\draw[line width=0.2mm, color=black] plot[smooth cycle] (0.75,0) rectangle (1.75,1);
				\draw[line width=0.2mm, color=black] plot[smooth cycle] (0,0.75) rectangle (1,1.75);
				
				\draw[line width=0.2mm, color=FireBrick] plot[smooth cycle] (3.35-0.5,3.35-0.5) rectangle (4.35-0.5,4.35-0.5);

				\coordinate[label=below:{\color{DarkBlue}\tiny$\chi = 1$}] (A) at (2.35+0.4,2.35+0.4);
				\coordinate[label=below:{\color{DarkGreen}\tiny$\chi = 0$}] (B) at (2.35,2.35-0.28);
				
				\coordinate[label=below:{\tiny$\mathcal{O}_1$}] (C) at (0.48,0.7);
				\coordinate[label=below:{\tiny$\mathcal{O}_2$}] (D) at (0.48+0.75,0.7);
				\coordinate[label=below:{\tiny$\mathcal{O}_{\sqrt{M}+1}$}] (C) at (0.51,0.8+0.75);

				\coordinate[label=below:\color{FireBrick}\tiny$\operatorname{supp}(\mu)$] (B) at (2.85+0.5,3.07+0.5);
				
				\draw[line width=0.2mm, dashed, color=DarkBlue] plot[smooth cycle] (2.3,2.3) rectangle (4.4,4.4);
				
				\draw[line width=0.2mm, dashed, color=DarkGreen] plot[smooth cycle] (2,2) rectangle (4.7,4.7);
				
			\end{tikzpicture}
		}
		\caption{Illustration of the covering $(\mathcal{O}_i)_{i\in\{1,\dots,M\}}$ (three example squares printed) and important values of the cutoff functions~$\mu$ and~$\chi$. Within the inner dashed square, which includes the support of $\mu$, one has $\chi = 1$. Exterior to the outer dashed square, $\chi$ vanishes.}
		\label{Figure:Covering}
	\end{figure}

	The next example demonstrates that partitions of unity with the required properties exist. See also \Cref{Figure:Covering} for a simplified illustration of the introduced setup.

	\begin{xmpl}\label{example:excp2}
		Let the closure of an open square $\mathcal{O}$ with side-length $L > 0$ be contained in $\omegaup$. As $\omegaup$ is open, we assume without loss of generality that~$2\pi/L$ is not an integer; thus, $M \coloneq \lceil 2\pi/L \rceil^2 > (2\pi/L)^2$. Now, choose $\mathcal{O}_1, \dots, \mathcal{O}_M$ as translations of~$\mathcal{O}$ with overlap-width $(\sqrt{M}L - 2\pi)/\sqrt{M}$ and bottom left corners $(o_i = (o_{i,1}, o_{i,2}))_{i\in\{1,\dots,M\}}$ given by
		\begin{align*}
			o_{i + \sqrt{M}(l - 1),1} = \frac{2\pi(i-1)}{\sqrt{M}}, \quad o_{i + \sqrt{M}(l - 1),2} = \frac{2\pi(l-1)}{\sqrt{M}}
		\end{align*}
		for $i,l = 1, \dots, \sqrt{M}$.
		Further, take $\widehat{\mu} \in C^{\infty}_0((0, L];[0,1])$ satisfying $\widehat{\mu}(s) = 1$ if and only if $s \in [(\sqrt{M}L-2\pi)/(2\sqrt{M}), L]$ and define $\widetilde{\mu} \in C^{\infty}(\mathbb{T};[0,1])$ via
		\[
		\widetilde{\mu}(s) = \mathbb{I}_{\left[0,\frac{\sqrt{M}L - 2\pi}{\sqrt{M}}\right]}(s) \widehat{\mu}(s) + \mathbb{I}_{\left(\frac{\sqrt{M}L - 2\pi}{\sqrt{M}},\frac{2\pi}{\sqrt{M}}\right)}(s) + \mathbb{I}_{\left[\frac{2\pi}{\sqrt{M}},L\right]}(s) \left(1 - \widehat{\mu}\left(s - \frac{2\pi}{\sqrt{M}} \right)\right)
		\]
		for each $s \in \mathbb{T}$. In particular, the function $\widetilde{\mu}$ satisfies $\operatorname{supp}(\widetilde{\mu}) \subset (0, L)$ and
		\begin{gather*}
			\sum_{l=1}^{\sqrt{M}} \widetilde{\mu}\left(x + \frac{2\pi(l-1)}{\sqrt{M}}\right) = 1
		\end{gather*}
		for all $x \in \mathbb{T}$. Finally, the cutoff $\mu\in C^{\infty}(\mathbb{T}^2;[0,1])$ is chosen via
		\[
		\mu(x) \coloneq \widetilde{\mu}(x_1-o_1) \widetilde{\mu}(x_2-o_2)
		\]
		for $x = (x_1,x_2)\in\mathbb{T}^2$ and $(o_1,o_2)$ being the bottom left vertex of $\mathcal{O}$. For a similar example see \cite[Example 3.1]{NersesyanRissel2024a}.
	\end{xmpl}

	\subsubsection{Definition of the vector field $\overline{y}$}
	The length $L > 0$ that determines the open covering $(\mathcal{O}_i^L)_{i \in \{1,\dots,M^L\}}$ will be fixed below in sole dependence on $\omegaup$; see \Cref{lemma:retf}. Hereto, we recall first the existence of a flushing trajectory in the return method sense (see \cite[Chapter 6]{Coron2007} for an introduction of the return method). 
	\begin{lmm}[{\cite[Lemma 5.1, Section 6]{FursikovImanuvilov1999}}]\label{lemma:rtc}
		Given any $T > 0$ and a nonempty open set $\omegaup_0 \subset \mathbb{T}^2$, there exists a vector field $\overline{Y} \in C^{\infty}_0((0,T); C^{\infty}(\mathbb{T}^2; \mathbb{R}^2))$ that satisfies
		\begin{equation*}\label{equation:propreto}
			\begin{gathered}
				\forall (x,t) \in \mathbb{T}^2\times[0,T]\colon \operatorname{div}(\overline{Y})(x,t) = 0,\\
				\exists \Psi \in C^{\infty}_0((0,T); C^{\infty}(\mathbb{T}^2; \mathbb{R})), \, \forall (x,t) \in (\mathbb{T}^2\setminus \omegaup_0)\times[0,T] \colon \overline{Y}(x,t) = \nabla \Psi (x,t),\\
				\forall x \in \mathbb{T}^2,\, \exists t_x \in (0,T) \colon \,\Phi^{\overline{Y}}(x, 0, t_x) \in \omegaup_0.
			\end{gathered}
		\end{equation*}
	\end{lmm}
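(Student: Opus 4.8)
The plan is to construct $\overline{Y}$ by concatenating in time a few ``elementary'' admissible vector fields and then rescaling and cutting off in time; the analytic heart is a compactness/contradiction argument showing that finitely many pieces suffice. First I would reduce the statement to a canonical configuration. Since $\omegaup_0$ is nonempty and open it contains a closed disk $\overline{B}$; a trajectory that visits $B$ visits $\omegaup_0$, and imposing $\overline{Y}(\cdot,t)=\nabla\Psi(\cdot,t)$ on $\mathbb{T}^2\setminus B\supset \mathbb{T}^2\setminus\omegaup_0$ is only more restrictive, so it suffices to treat $\omegaup_0=B$. Next I would fix the reference interval to be $[0,1]$: given a field $\overline{Y}\in C^{\infty}_0((0,1);C^{\infty}(\mathbb{T}^2;\mathbb{R}^2))$ with the three properties on $[0,1]$, choose a smooth nondecreasing surjection $\Lambda\colon[0,T]\to[0,1]$ with $\Lambda\equiv 0$ near $0$ and $\Lambda\equiv 1$ near $T$, and put $\overline{Y}_T(x,t):=\Lambda'(t)\,\overline{Y}(x,\Lambda(t))$ and $\Psi_T(x,t):=\Lambda'(t)\,\Psi(x,\Lambda(t))$; the chain rule gives $\Phi^{\overline{Y}_T}(x,0,t)=\Phi^{\overline{Y}}(x,0,\Lambda(t))$, while divergence-freeness and the gradient structure hold pointwise in $t$, so all three properties and the $C^{\infty}_0$-in-time membership are inherited.

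Second, I would record the structural meaning of admissibility: at each fixed time, a field is divergence-free and equals $\nabla\Psi(\cdot,t)$ on $\mathbb{T}^2\setminus B$ precisely when it is the Biot--Savart field $\Upsilon(\omega(\cdot,t),A(t))$ of a vorticity $\omega(\cdot,t)$ supported in $\overline{B}$, together with a mean $A(t)$ that is \emph{forced} by $\omega(\cdot,t)$ through the single-valuedness of $\Psi$ (cf.~\eqref{equation:Upsilon}). In particular the stream function is harmonic off $\overline{B}$, so by the maximum principle it has no interior extrema there; consequently a \emph{single autonomous} admissible field has its orbits equal to level-set components of such a stream function, and a positive-measure set of those orbits is trapped inside $\mathbb{T}^2\setminus\overline{B}$ and never reaches $B$. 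Hence genuine time dependence is unavoidable, and I would look for $\overline{Y}$ of the form $\overline{Y}(\cdot,t)=\lambda_j(t)\,W_j$ on consecutive subintervals $[\tau_{j-1},\tau_j]$ of $[0,1]$, where $W_1,\dots,W_k$ are finitely many fixed admissible fields---for instance Biot--Savart fields generated by vortex-type vorticities placed at various points of $B$, a ``blinking vortex'' type stirring---and $\lambda_j\ge 0$ are smooth cutoffs vanishing at the endpoints; smoothing at the junctions keeps $\overline{Y}\in C^{\infty}_0((0,1);C^{\infty}(\mathbb{T}^2;\mathbb{R}^2))$, admissible at every time, with flow on $[0,1]$ a composition of reparametrized flows of the $W_j$.

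What makes this feasible at the level of a single initial condition is an accessibility fact: writing $\mathcal{A}$ for the (linear) family of admissible fields, the evaluation $\{W(x_0):W\in\mathcal{A}\}$ is all of $T_{x_0}\mathbb{T}^2$ for every $x_0\notin\overline{B}$, since Biot--Savart fields of vortices concentrated at varying points of $B$ point in all directions at $x_0$. By the orbit theorem the $\mathcal{A}$-orbit through any $x_0$ is then an open subset of the connected manifold $\mathbb{T}^2$, hence equals $\mathbb{T}^2$, so $x_0$ can be steered into $B$ by some finite concatenation of admissible flows; by continuity of flows and openness of $B$, such a concatenation flushes a whole neighborhood of $x_0$, and by compactness finitely many concatenations cover $\mathbb{T}^2$.

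The main obstacle---and the step I expect to be genuinely delicate---is upgrading ``each point is flushed by \emph{some} schedule'' to ``\emph{one} schedule flushes every point''. Naive concatenation fails: a schedule designed to flush a neighborhood from its \emph{original} position need not flush those same particles once a previous schedule has displaced them, so one cannot simply string together the members of a finite subcover. Resolving this uniformity/ensemble issue is precisely the content of \cite[Lemma 5.1]{FursikovImanuvilov1999}, which is proved there by a contradiction argument (assuming no admissible dynamics flushes a given point and confronting this with the accessibility of the admissible family). I would invoke that lemma; combined with the reductions above and the explicit torus device of \cite[Section~6]{FursikovImanuvilov1999}, it yields \Cref{lemma:rtc}.
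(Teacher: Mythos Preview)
The paper does not supply its own proof of \Cref{lemma:rtc}; it states the lemma with attribution to \cite[Lemma~5.1, Section~6]{FursikovImanuvilov1999} and later remarks that the cited proof proceeds by a contradiction argument. Your proposal is consistent with this: after sound reductions (shrinking $\omegaup_0$ to a disk, rescaling to the unit time interval) and a correct identification of the essential difficulty---upgrading pointwise accessibility to a single schedule that flushes all of $\mathbb{T}^2$---you invoke exactly the same external lemma for the key step, so there is nothing to compare beyond your added scaffolding.
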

	
	\begin{rmrk}\label{remark:retpp}
		Since $(\overline{Y} \cdot \nabla) \overline{Y}$ can be written as a gradient wherever $\overline{Y}$ is a gradient, the first two properties of $\overline{Y}$ in \Cref{lemma:rtc} ensure that $\overline{Y}$ solves in $\mathbb{T}^2\times[0,T]$ the controlled incompressible Euler system
		\begin{gather*}
			\partial_t \overline{Y} + (\overline{Y}\cdot\nabla)\overline{Y} + \nabla \overline{p} = \mathbb{I}_{\omegaup_0} \overline{\xi},\\
			\operatorname{div}(\overline{Y}) = 0,\\
			\overline{Y}(\cdot, 0) = \overline{Y}(\cdot, 1) = 0
		\end{gather*}
		for a smooth pressure $\overline{p}$ and a smooth control $\overline{\xi}$ with $\operatorname{supp}(\overline{\xi}) \subset \omegaup_0\times(0,T)$. The third property of $\overline{Y}$ in \Cref{lemma:rtc} states that information originating from any location in~$\mathbb{T}^2$ is flushed along~$\overline{Y}$ into the given set~$\omegaup_0$.
	\end{rmrk}
	
	A suitable choice of the length $L > 0$ in the definition of squares $(\mathcal{O}_i^L)_{i \in \{1,\dots,M^L\}}$ is now determined together with a finite-dimensional return method trajectory~$\overline{y}$ that has several refined properties.
	
	\begin{thrm}\label{lemma:retf}
		There exist $L = L_{\omegaup} > 0$, $D_{\omegaup} \in \mathbb{N}$, a $D_{\omegaup}$-dimensional vector space $\mathscr{H}_{\omegaup} \subset C^{\infty}(\mathbb{T}^2; \mathbb{R}^2)$, a neighborhood $\mathcal{N}(\mathbb{T}^2\setminus \omegaup)$ of $\mathbb{T}^2\setminus \omegaup$, and a vector field $\overline{y} = \overline{y}_{\omegaup} \in C^{\infty}_0((0,1); \mathscr{H}_{\omegaup})$ with the properties
		\begin{gather}
			\forall h \in \mathscr{H}_{\omegaup}, \,  \forall x \in \mathbb{T}^2\colon \operatorname{div}(h)(x) = 0\label{equation:propybar-1},\\
			\forall h \in \mathscr{H}_{\omegaup}, \, \exists \varphi_h \in C^{\infty}(\mathbb{T}^2; \mathbb{R}), \, \forall x \in \mathcal{N}(\mathbb{T}^2\setminus \omegaup) \colon h(x) = \nabla \varphi_h (x),\label{equation:propybar0}\\
			\forall t \in [0, T^{\star}] \cup [1-T^{\star}, 1] \cup [t^1_a, t^1_b] \cup \dots \cup [t^M_a, t^M_b] \colon \overline{y}(t) = 0\label{equation:propybar1}
		\end{gather}
		and
		\begin{gather}
			\begin{gathered}
				\overline{y}(\cdot, t^{i-1}_c+ t) = - \overline{y}(\cdot, t^{i}_c - t), \\
				\Phi^{\overline{y}}(x,t^{i-1}_c, t^{i-1}_c+t) = \Phi^{\overline{y}}(x,t^{i-1}_c, t^{i}_c-t), 
			\end{gathered}\label{equation:propybar2}\\
			\operatorname{dist}(x,\mathcal{O}_i) < L \implies \,\Phi^{\overline{y}}(x, 0, [t^i_a, t^i_b]) = \{x + S_i\}\label{equation:propybar3}
		\end{gather}
		for all $(x,t) \in \mathbb{T}^2\times[0, 3T^{\star}]$,  $i\in\{1,\dots,M\}$, and where the objects
		\[
		M = M^L, \, \mathcal{O} = O^L, \, \mathcal{O}_i = \mathcal{O}_i^L, \, S_i = S_i^L, \, t^i_c = t^{i,L}_c, \, t^i_a = t^{i,L}_a, \, t^i_b = t^{i,L}_b, \, T^{\star} = T^{\star,L}
		\]
		are chosen in dependence on $L$ as described at the beginning of \Cref{subsection:convection}. 
		
	\end{thrm}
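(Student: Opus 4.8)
The plan is to build $\overline{y}$ explicitly as a time-reversible concatenation of $M$ "flushing packets," one per cell $\mathcal{O}_i$, each of which rigidly translates a neighborhood of $\mathcal{O}_i$ onto the fixed reference location $\mathcal{O}$ inside $\omegaup$. First I would fix the length $L = L_{\omegaup}$: invoke \Cref{example:excp2} to get a covering $(\mathcal{O}_i)_{i\in\{1,\dots,M\}}$ of $\mathbb{T}^2$ by translates of a square $\mathcal{O}$ with $\overline{\mathcal{O}}\subset\omegaup$, together with the cutoffs $\mu,\chi$ and the translation vectors $S_i$ carrying $\mathcal{O}_i$ to $\mathcal{O}$. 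This also fixes $M = \lceil 2\pi/L\rceil^2$, the time step $T^\star = 1/(3M+2)$, and the grid $t^0_c < t^1_a < \dots < t^M_c$ partitioning $[0,1]$ into $3M+2$ subintervals of length $T^\star$.

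Next I would construct, for each $i$, a compactly supported autonomous-in-profile vector field generating the pure translation $x\mapsto x+S_i$ on a $L$-neighborhood of $\mathcal{O}_i$, while being a gradient off $\omegaup$. Concretely: let $a_i\colon[0,1]\to\mathbb{R}^2$ be $C^\infty_0$ with $\int_0^1 a_i = S_i$ and supported in a small subinterval; on the product cell one wants a vector field that equals $a_i(\cdot)$ (the constant vector) on a neighborhood of $\mathcal{O}_i$ and is divergence-free with support in $\omegaup$. Since a constant vector field is itself a gradient, one can take $h_i := \nabla^\perp\big[\chi\,(\text{linear form dual to }a_i)\big]$-type building blocks — more precisely the finite list of profiles $\nabla^\perp[\chi s_l(\cdot-S_j)]$, $\nabla^\perp[\chi c_l(\cdot-S_j)]$ and their derivatives appearing in the explicit description of $\mathscr{F}_{\mathscr{v}}$ in \Cref{subsection:ce} — which are automatically divergence-free, supported in $\omegaup$, and gradients on $\{\chi=0\}\supset\mathbb{T}^2\setminus\mathcal{N}(\mathbb{T}^2\setminus\omegaup)$; one checks that on the region where $\chi\equiv 1$ near $\overline{\mathcal O}$ the field $\nabla^\perp[\chi\,\ell\cdot x] = \ell^\perp$ is exactly the constant $\ell^\perp$, so suitable real combinations realize any prescribed constant translation velocity on that region. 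Set $\mathscr{H}_{\omegaup}$ to be the span of this finite list; then \eqref{equation:propybar-1} and \eqref{equation:propybar0} hold for every element by construction. This is where \Cref{remark:exampley} is promised to make everything explicit.

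Then I would assemble $\overline{y}$ on the time grid. On $[t^{i-1}_c, t^i_c]$ (length $3T^\star$) put $\overline{y}(\cdot,t) = \beta_i(t)\,h_i$ where $\beta_i\in C^\infty_0((t^{i-1}_c,t^i_c))$ is chosen odd about the midpoint $(t^{i-1}_c+t^i_c)/2$, identically zero on the first and last sub-subintervals $[t^{i-1}_c,t^i_a]$ and $[t^i_b,t^i_c]$ (giving \eqref{equation:propybar1}), and with $\int_{t^{i-1}_c}^{(t^{i-1}_c+t^i_c)/2}\beta_i = 1$ so that the flow over $[t^i_a,t^i_b]$ realizes exactly the translation by $S_i$ on the region where $h_i$ is the constant $S_i$-velocity, i.e. \eqref{equation:propybar3} — here one uses that $h_i$ is spatially constant on an $L$-neighborhood of $\mathcal O_i$, so the ODE $\dot x = \beta_i(t)h_i(x)$ has the closed-form solution $x + (\int\beta_i)S_i$ as long as the trajectory stays in that neighborhood, which it does since the total displacement over $[t^i_a,t^i_b]$ is $S_i$ and the neighborhood has width $L \geq |S_i$-displacement$|$ by construction of the covering. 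The oddness of $\beta_i$ about the midpoint and the fact that $h_i$ is autonomous give the reversibility identities \eqref{equation:propybar2}: $\overline{y}(\cdot,t^{i-1}_c+t) = \beta_i(t^{i-1}_c+t)h_i = -\beta_i(t^i_c-t)h_i = -\overline{y}(\cdot,t^i_c-t)$, and the flow statement follows because reversing time on a velocity field that is the negative of the forward one retraces the same curve. Finally, $\overline{y}\equiv 0$ on $[0,T^\star]\cup[1-T^\star,1]$ because $t^0_c = T^\star$ and $t^M_c = 1-T^\star$ by the equidistant partition, and $\overline{y}\in C^\infty_0((0,1);\mathscr{H}_{\omegaup})$ since each $\beta_i h_i$ is compactly supported in the open interval and the pieces glue smoothly (all $\beta_i$ vanish to infinite order at the grid points).

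The main obstacle is the simultaneous satisfaction of \eqref{equation:propybar3} for \emph{all} $i$ with the flow of a \emph{single} globally divergence-free field: one must ensure that while the $i$-th packet acts, the $L$-neighborhood of $\mathcal{O}_i$ is transported rigidly (no shear, no distortion) even though $h_i$ is only constant on part of $\mathbb{T}^2$ — this forces a careful choice of $L$ relative to $\pi$ and of the overlap widths so that the neighborhoods fit, the translates $\mathcal{O}_i+S_i$ all coincide with $\mathcal{O}$, and the "constant-velocity" region of $h_i$ genuinely contains the full $L$-neighborhood of $\mathcal{O}_i$ throughout its transport. Getting the geometry of the covering and the cutoff $\chi$ to make this work uniformly is the crux; once the geometric bookkeeping is set up, the analytic content (smoothness, compact support, reversibility) is routine. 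I would record the explicit dimension count $D_{\omegaup}$ of $\mathscr{H}_{\omegaup}$ at the end, matching the $\lceil 2\pi/L\rceil^2$- and $\lceil 2\pi/L\rceil^4$-type bounds stated in \Cref{subsection:ce}.
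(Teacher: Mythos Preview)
Your construction has a genuine gap that makes it fail for general $\omegaup$. You require each profile $h_i$ to be simultaneously (a) supported in $\omegaup$ (so that it vanishes, hence is trivially a gradient, on $\mathbb{T}^2\setminus\omegaup$), and (b) equal to a nonzero constant translation velocity on an $L$-neighborhood of $\mathcal{O}_i$. But the squares $\mathcal{O}_i$ cover all of $\mathbb{T}^2$, so for most $i$ the set $\mathcal{O}_i$ lies outside $\omegaup$, and (a) and (b) are incompatible: a field supported in $\omegaup$ has flow equal to the identity outside $\omegaup$ and can never transport $\mathcal{O}_i$ anywhere. The profiles $\nabla^\perp[\chi\,\ell\cdot x]$ you write down are constant only where $\chi\equiv 1$, which is near $\mathcal{O}\subset\omegaup$, not near $\mathcal{O}_i$. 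The alternative you allude to---genuinely constant vector fields on $\mathbb{T}^2$---are curl-free and hence gradients on any simply-connected region, but \eqref{equation:propybar0} demands they be gradients on a neighborhood of $\mathbb{T}^2\setminus\omegaup$, which need not be simply-connected. Your proposal is essentially the special case of \Cref{remark:exampley}, which the paper explicitly flags as valid only when $\mathbb{T}^2\setminus\omegaup$ is simply-connected.

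The paper's proof takes a substantially different route for general $\omegaup$. It invokes \Cref{lemma:rtc} to obtain a Fursikov--Imanuvilov flushing trajectory $\overline{Y}$ that is \emph{not} supported in $\omegaup$ but is a gradient on $\mathbb{T}^2\setminus\omegaup$; this is what allows transport of each $\mathcal{O}_i$ from outside $\omegaup$ into $\omegaup$. The length $L$ is chosen small a posteriori so that a neighborhood $B_i$ of $\mathcal{O}_i$ is flushed deep into a fixed ball $\omegaup_0\subset\omegaup$ without being torn apart. Finite-dimensionality is then obtained by a time-discretization of $\overline{Y}$ (piecewise-constant in time up to a smooth envelope $\rho_i$), justified via a Gr\"onwall argument that controls the flow error. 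The exact-translation property \eqref{equation:propybar3} is achieved by a second building block: a time-reversed, cutoff-localized, and iteratively shift-corrected version of the discretized field, acting entirely inside $\omegaup_0$ to map $\Phi^{\widetilde{Y}_i}(B_i,0,T^\star/2)$ rigidly back onto $B_i+S_i\subset\omegaup$. The reversibility \eqref{equation:propybar2} is then produced by concatenating each building block with its own time-reversal. The missing idea in your plan is precisely this use of a non-localized but gradient-outside-$\omegaup$ flushing field, together with its time-discretization.
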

	The proof of \Cref{lemma:retf}, which is based on \Cref{lemma:rtc}, is carried out after several remarks below.
	\begin{rmrk}
		The property \eqref{equation:propybar2} describes how transportation along $\overline{y}$ oscillates back and forth, thereby inducing a flow with specific periodic behavior. The property  \eqref{equation:propybar3} ensures that a neighborhood of each square~$\mathcal{O}_i$ is transported in time~$t^i_a$ to a rigid translation of itself contained in $\omegaup$. 
	\end{rmrk}
	\begin{rmrk}\label{remark:exampley}
		If~$\mathbb{T}^2\setminus\omegaup$ is simply-connected, one can skip the proof of \Cref{lemma:retf} and instead use the explicit example provided by \cite[Theorem 3.2]{NersesyanRissel2024a} with $\mathscr{H}_{\omegaup} = \mathbb{R}^2$ being a two-dimensional space of constant functions. More specifically, for any fixed basis $\{b_1, b_2\}$ of $\mathbb{R}^2$, one can take
		\[
		\overline{y}(t) = \overline{y}_1(t) b_1 +  \overline{y}_2(t) b_2,
		\]
		where $\overline{y}_1,\overline{y}_2\colon [0,1] \longrightarrow \mathbb{R}$ are smooth functions chosen such that \cref{equation:propybar-1,equation:propybar0,equation:propybar1,equation:propybar2,equation:propybar3} hold. In this case, a covering $(\mathcal{O}_i)_{i\in\{1,\dots,M\}}$ is obtained by fixing any $L > 0$ such that a closed square of side-length $L$ is contained in $\omegaup$. Because $\mathbb{T}^2\setminus \omegaup$ is simply-connected, for each $t \in [0,1]$ one can express~$\overline{y}(t)$ as the sum of a gradient and a curl-free function supported in~$\omegaup$. See \cite{NersesyanRissel2024a} for more details.
	\end{rmrk}
	\begin{proof}[{Proof of \Cref{lemma:retf}}]
		Let us begin with a description of the idea. We start with an appropriate reference flow from \Cref{lemma:rtc} and choose $L = L_{\omegaup}$ so small that it transports entire neighborhoods of $\mathcal{O}_1^L, \dots, \mathcal{O}_{M^L}^L$ through~$\omegaup$. In fact, we will concatenate several scaled copies of that flow in order to prescribe the instances of time at which the content of each square is mapped into the control region. A finite-dimensional approximation of each so-obtained flow is constructed via a time discretization argument. Everything until here, we call \enquote{Building block 1}. To achieve the property~\eqref{equation:propybar3}, we start by reversing in time the dynamics of each square~$\mathcal{O}_i$ being transported along a flow described via \enquote{Building block 1} to a respective set~$A_i$ contained inside the control region; this shows how the set $A_i$ can be mapped to a square using a return method flow. However, the so-achieved square will be~$\mathcal{O}_i$ and thus may intersect $\mathbb{T}^2\setminus\omegaup$. Therefore, to contain this reversed process fully in~$\omegaup$, we use stream function cutoffs and local shifts to modify the reversed return method flow inside of~$\omegaup$ while setting it zero away from~$\omegaup$. We will call this \enquote{Building block 2}.
		In the end, the building blocks are glued in an appropriate manner. Hereby, to ensure \eqref{equation:propybar2}, original and time-reversed versions of the building blocks are iterated. 
		
		\paragraph*{Fixing $L$ and a reference flow.}
		We choose $\overline{Y}$ via \Cref{lemma:rtc} for an open ball $\omegaup_0\subset \omegaup$ of diameter $d_0 > 0$ and a fixed time $T > 0$ ({\it e.g., $T = 1$}). By the compactness of $\mathbb{T}^2$ and smoothness of $\overline{Y}$, we take $L = L_{\omegaup} > 0$ so small that for each $i \in \{1, \dots, M = M^L\}$ there exists a time $t_i \in (0, T)$, a neighborhood $B_i$ of $\mathcal{O}_i = \mathcal{O}^L_i$, and a family of balls $(B_{i,t})_{t \in [0,T]}$ of radius $d_0/6$ satisfying
		\begin{gather*}
			\operatorname{dist}(\partial B_i, \mathcal{O}_i) > L, \\ \Phi^{\overline{Y}}(B_i, 0, t_i) \subset \omegaup_0, \\ \operatorname{dist}(\Phi^{\overline{Y}}(B_i, 0, t_i), \partial \omegaup_0) > d_0/3,\\
			\Phi^{\overline{Y}}(B_i, 0, t) \subset B_{i,t}
		\end{gather*}
		for all $t \in [0,T]$. The first property specifies the location of $O_i$ in the open set~$B_i$. The second and third properties express that~$B_i$ should be flushed in time~$t_i$ sufficiently \enquote{deep} into $\omegaup_0$. The last property states that $\Phi^{\overline{Y}}$ cannot tear $B_i$ too much apart: the image of $B_i$ under the flow must be confined at each time to a ball of radius $d_0/6$.

		To simplify the presentation, let us now always assume that the index $i$ ranges over the set $\{1, \dots, M\}$. 
		
		\paragraph*{Building block 1.}
		The scaled versions $\overline{Y}_i(x,t) \coloneq r_i\overline{Y}(x, r_i t)$ with $r_i \coloneq 2t_i/T^{\star}$ satisfy $\Phi^{\overline{Y}}(x, 0, r_i t) = \Phi^{\overline{Y}_i}(x,0,t)$ for all $(x,t) \in \mathbb{T}^2\times[0, T^{\star}/2]$. Consequentially,
		\begin{gather*}
			\Phi^{\overline{Y}_i}(B_i, 0, T^{\star}/2) \subset \omegaup_0, \\ \operatorname{dist}(\Phi^{\overline{Y}_i}(B_i, 0, T^{\star}/2), \partial \omegaup_0) > d_0/3,\\
			\Phi^{\overline{Y}_i}(B_i, 0, t) \subset B_{i,r_i t}
		\end{gather*}
		for each $t \in [0, T^{\star}/2]$.
		
		Next, we introduce finite-dimensional versions of the vector fields $\overline{Y}_1, \dots, \overline{Y}_M$. Hereto, we fix possibly large $N_i \in \mathbb{N}$, together with
		\begin{gather*}
			\rho_i \in C^{\infty}_0((0,T^{\star}/2)\setminus\{T^{\star}/2N_i, T^{\star}/N_i, \dots, (N_i-1)T^{\star}/2N_i\}; [0,1]),\\
			\mathcal{I}_k \coloneq [(k-1)T^{\star}/2N_i, kT^{\star}/2N_i]
		\end{gather*}
		for $k \in \{1, \dots, N_i\}$, such that 
		\begin{equation}\label{equation:TYi}
			\widetilde{Y}_i(x,t) \coloneq \rho_i(t) \sum_{k=1}^{N_i} \mathbb{I}_{\mathcal{I}_k}(t) \overline{Y}_i(x, k T^{\star}/2N_i)
		\end{equation}
		satisfies
		\begin{equation}\label{equation:npf}
			\begin{gathered}
				\Phi^{\widetilde{Y}_i}(B_i, 0, T^{\star}/2) \subset \omegaup_0, \\ \operatorname{dist}(\Phi^{\widetilde{Y}_i}(B_i, 0, T^{\star}/2), \partial \omegaup_0) > d_0/3, \\
				\max_{a\in \mathbb{T}^2,\, |s-r| \leq T^{\star}/2N_i} |	\Phi^{\widetilde{Y}_i}(a, s, r) - a| < d_0/6
			\end{gathered}\\
		\end{equation}
		and
		\begin{equation}\label{equation:ballit2}
			\Phi^{\widetilde{Y}_i}(B_i, 0, t) \subset B_{i,r_i t}
		\end{equation}
		for all $(x,t) \in \mathbb{T}^2\times[0, T^{\star}/2]$.
		
		The possibility to choose such numbers $N_1, \dots, N_M$ and profiles $\rho_1, \dots, \rho_{M}$ is due to the definition of flows of vector fields in \eqref{equation:flow} and Gr\"onwall's inequality. Indeed, it holds
		\begin{gather*}
			\max_{(x,t) \in \mathbb{T}^2\times[0,T^{\star}/2]}|\Phi^{\overline{Y}_i}(x,0,t) - \Phi^{\widetilde{Y}_i}(x,0,t)|
			\leq C \|\overline{Y}_i-\widetilde{Y}_i\|_{L^1((0,T^{\star}/2);C^0(\mathbb{T}^2;\mathbb{R}^2))},
		\end{gather*}
		where
		\[
		0 < C \leq \max_{i \in \{1,\dots, M\}}\operatorname{e}^{\int_0^{T^{\star}/2} \|\overline{Y}_i(\cdot, s)\|_{C^1(\mathbb{T}^2;\mathbb{R}^2)} \, ds}.
		\]
		Furthermore, the Lipschitz continuity of the smooth vector field $\overline{Y}_i$ implies
		\begin{multline*}
			\|\overline{Y}_i-\widetilde{Y}_i\|_{L^1((0,T^{\star}/2);C^0(\mathbb{T}^2;\mathbb{R}^2))} \\
			\begin{aligned}
				& \leq \sum_{k=1}^{N_i} \int_{\mathcal{I}_k} \left( |1-\rho(t)| \sup_{x\in\mathbb{T}^2} |\overline{Y}_i(x,t)| + \sup_{x\in\mathbb{T}^2} |\overline{Y}_i(x,t) - \overline{Y}_i(x, kT^{\star}/2N_i)| \right) \, dt \\
				& \leq C\|\rho_i - 1\|_{L^1((0,T^{\star}/2); \mathbb{R})} + CN_i^{-1},
			\end{aligned}
		\end{multline*}
		where $C > 0$ denotes a generic constant that can depend on $\omegaup$ but is independent of all data in \Cref{theorem:main} and \Cref{theorem:main2}. Thus, to approximate $\Phi^{\overline{Y}_i}$ by $\Phi^{\widetilde{Y}_i}$ it suffices to take large~$N_i$, while ensuring that $\|\rho_i - 1\|_{L^1((0,T^{\star}/2); \mathbb{R})}$ is small. Hereby, the latter smallness is always attained by some smooth $\rho_i$ vanishing on a neighborhood of $\{0, T^{\star}/2N_i, T^{\star}/N_i, \dots, T^{\star}/2\}$.
		
		Inspecting \eqref{equation:TYi}, one finds that each $\widetilde{Y}_i$ belongs to $C^{\infty}_0((0,T^{\star}/2); \widetilde{\mathscr{H}}_{\omegaup})$ for a universal space~$\widetilde{\mathscr{H}}_{\omegaup} \subset C^{\infty}(\mathbb{T}^2;\mathbb{R}^2)$ that consists of divergence-free functions and has at most dimension $\widetilde{N}_{\omegaup} \coloneq \sum_{i=1}^M N_i$.

		\paragraph*{Building block 2.} All elements of $\widetilde{\mathscr{H}}_{\omegaup}$ are divergence-free and $\omegaup$ is without loss of generality simply-connected. Thus, given any $S \in \mathbb{R}^2$, one has the stream function representations $\widetilde{Y}_i(x - S, t) = \nabla^{\perp} \widetilde{\Psi}_{i,S}(x, t)$ with \smash{$\widetilde{\Psi}_{i,S} \in C^{\infty}_0((0,T^{\star}/2); \widetilde{\mathscr{H}}_{\omegaup, S})$}
		for all $(x,t) \in \omegaup\times[0, T^{\star}/2]$ and an at most \smash{$\widetilde{N}_{\omegaup}$}-dimensional vector space \smash{$\widetilde{\mathscr{H}}_{\omegaup, S} \subset C^{\infty}(\mathbb{T}^2; \mathbb{R})$}. 
		
		Now, let $\chi_0 \in C^{\infty}(\mathbb{T}^2;[0,1])$ be a cutoff with $\operatorname{supp}(\chi_0) \subset \omegaup$ and $\chi_0 = 1$ on a neighborhood of $\overline{\omegaup}_0$. Then, we define for $(x,t) \in \mathbb{T}^2\times[0, T^{\star}/2]$ the following time-reversed, localized, and shifted version of~$\widetilde{Y}_i$:
		\begin{equation*}\label{equation:WHYi}
			\begin{aligned}
				\widehat{Y}_i(x,t) & \coloneq -\sum_{k=1}^{N_i} \mathbb{I}_k(T^{\star}/2-t) \nabla^{\perp} [\chi_0\widetilde{\Psi}_{i,s_i^k}(\cdot,T^{\star}/2-t)](x) \\
				& \quad \, + \nabla^{\perp} [\chi_0(x) (\widetilde{s}_i^1(T^{\star}/2-t) x_1 + \widetilde{s}_i^2(T^{\star}/2-t) x_2)],
			\end{aligned}
		\end{equation*}
		where the parameters $s_i^1, \dots, s_i^{N_i} \in \mathbb{R}^2$ and $\widetilde{s}^1_i, \widetilde{s}^1_i \in C^{\infty}((0,T^{\star}/2); \mathbb{R})$ are chosen such that
		\begin{gather*}
			\Phi^{\widehat{Y}_i}(\Phi^{\widetilde{Y}_i}(B_i, 0, T^{\star}/2), 0,
			t) \subset \omegaup_0, \label{equation:ifp1}\\
			\Phi^{\widehat{Y}_i}(\Phi^{\widetilde{Y}_i}(x, 0, T^{\star}/2), 0,  T^{\star}/2) = x+S_i\label{equation:ifp2}
		\end{gather*}
		for $t \in [0, T^{\star}/2]$ and $x \in B_i$. The cutoff $\chi_0$ ensures that the flow is stationary away from~$\omegaup$ and that~$\widehat{Y}_i$ is a gradient in~$\mathbb{T}^2\setminus\omegaup$. 
		
		These choices of~$s_i^1, \dots, s_i^{N_i} \in \mathbb{R}^2$ and $\widetilde{s}^1_i, \widetilde{s}^2_i \in C^{\infty}((0,T^{\star}/2); \mathbb{R})$ are possible by the following reasoning.
		
		1) Due to \eqref{equation:npf}, for each $ k \in \{1,\dots,N_i\}$,
		information cannot be transported distances larger than $d_0/6$ along~$\widetilde{Y}_i$ on the time interval $\mathcal{I}_k$.
		Moreover, the function~$\rho$ is known to vanish on intervals
		\[
		[0, r_0], \quad [T^{\star}/2N_i-r_1,T^{\star}/2N_i+r_1], \quad \dots, \quad [T^{\star}/2-r_{N_i}, T^{\star}/2]
		\]
		for sufficiently small $r_0, r_1, \dots, r_{N_i} > 0$.
		Therefore, after transporting the set $\Phi^{\widetilde{Y}_i}(B_i, 0, T^{\star}/2)$ along the vector field~$-\widetilde{Y}_i(\cdot, T^{\star}/2-\cdot)$ for a duration of $T^{\star}/2N_i - r_{N_i-1}$, the resulting set, temporarily called~$A_i^1$, is still contained in $\omegaup_0$ and of diameter less than $d_0/6$. Hence, one can take $s_i^{N_i} = 0$. Then, one quickly pushes $A_i^1$ back inwards~$\omegaup_0$ by prescribing suitable nonzero values for $\widetilde{s}^1_i$ and $\widetilde{s}^2_i$ on an interval
		\[
		(a^1_i,b^1_i)\subset(T^{\star}/2-T^{\star}/2N_i, T^{\star}/2-T^{\star}/2N_i+r_{N_i-1}).
		\]
		Notably, the choice of $\rho_i$ appearing in \eqref{equation:TYi} ensures that $\rho_i(t) = 0$ for all $t \in [a^1_i,b^1_i]$. The values $s_i^{N_i-1} \in \mathbb{R}^2$ are subsequently fixed such that
		\begin{equation*}\label{equation:sp1}
			\Phi^{\widehat{y}_i}(A_i^1,b^1_i,a^1_i) = A_i^1 + s_i^{N_i-1} = \{a + s_i^{N_i-1} \, | \, a \in A_i^1 \} \subset \omegaup_0,
		\end{equation*}
		where
		\[
		\widehat{y}_i(x,t) \coloneq \nabla^{\perp} [\chi_0(x) (\widetilde{s}_i^1(t) x_1 + \widetilde{s}_i^2(t) x_2)]
		\]
		for $(x, t) \in \mathbb{T}^2\times[0,T^{\star}/2]$. Hereby, the existence of such $s_i^{N_i-1}$ follows from the fact that $\widehat{y}_i$ is constant in $\overline{\omegaup}_0$ with respect to the space variables; its' flow rigidly translates~$A_i^1$ within $\omegaup_0$.
		Moreover, we can assume that the values of $(\widetilde{s}^1_i, \widetilde{s}^2_i)$ on $(a^1_i,b^1_i)$ are fixed so that
		\begin{gather*}
			\operatorname{dist}(\Phi^{\widehat{y}_i}(A_i^1,b^1_i,a^1_i), \partial \omegaup_0) > d_0/3, \\
			\forall s \in [T^{\star}/2-T^{\star}/2N_i, T^{\star}/2]\colon \Phi^{\widehat{y}_i}(A_i^1,T^{\star}/2, s) \subset \omegaup_0.
		\end{gather*}
		
		2) Starting at $t = T^{\star}/2N_i$, the set~$A_i^1 + s_i^{N_i-1}$ is transported along the vector field $t \mapsto-\widetilde{Y}_i(\cdot-s_i^{N_i-1}, T^{\star}/2-t)$ for a duration of $T^{\star}/2N_i - r_{N_i-2}$ to a set $A_i^2$. From here, the above idea is repeated iteratively until the shifts are defined on all intervals $\mathcal{I}_k$ for $k \in \{1,\dots,N_i\}$.
		
		\paragraph*{Dimensions.} The constructions ensure that $\widetilde{Y}_i, \widehat{Y}_i \in C^{\infty}_0((0,T^{\star}/2); \mathscr{H}_{\omegaup})$ for an at most $D_{\omegaup}$-dimensional vector space $\mathscr{H}_{\omegaup} \subset C^{\infty}(\mathbb{T}^2; \mathbb{R}^2)$, where
		\[
		D_{\omegaup} \coloneq  \widetilde{N}_{\omegaup} + \left( \sum_{i=1}^M N_i \right)^2 + 2.
		\]
		More precisely, the space~$\mathscr{H}_{\omegaup}$ is spanned by the following functions: 1) the elements of $\widetilde{\mathscr{H}}_{\omegaup}$; 2) the functions $\nabla^{\perp} (\chi_0 \psi)$ with $\psi \in \widetilde{\mathscr{H}}_{\omegaup, s^k_l}$, $k \in \{1,\dots, N_l\}$ and $l \in \{1,\dots,M\}$; 3) the two profiles $\nabla^{\perp}[\chi_0 x_1]$ and $\nabla^{\perp}[\chi_0 x_2]$.
		
		\paragraph*{Conclusion of the proof.} A function $\overline{y}$ with the desired properties is now defined by zero on $[0, t^0_c] \cup [t^M_c ,1]$, by $\widetilde{Y}_i(\cdot, t-t^{i-1}_c)$ on $[t^{i-1}_c,  t^{i-1}_c + T^{\star}/2]$, by $\widehat{Y}_i(\cdot, t - t^{i-1}_c - T^{\star}/2)$ on $[t^{i-1}_c + T^{\star}/2, t^i_a]$, by zero on $[t^i_a, t^i_b]$, by $-\widehat{Y}_i(\cdot, T^{\star}/2 + t^i_b - t)$ on $[t^i_b, t^i_b + T^{\star}/2]$, and by $-\widetilde{Y}_i(\cdot, T^{\star} + t^i_b - t)$ on $[t^i_b + T^{\star}/2, t^i_c]$.
	\end{proof}

	\subsection{Modified generating vector field \texorpdfstring{$\overline{u}^{\star}$}{}}\label{subsection:driftUbar}
	For the sake of explicitness, we fix now in \Cref{definition:ovf} the natural example $\mathcal{K} = \{(1,0), (0,1)\} \subset \mathbb{Z}^2 \setminus \{0\}$. Then, the space $\mathscr{H}$ from \eqref{equation:defH} is four-dimensional and given by
	\begin{equation}\label{equation:choiceK}
		\mathscr{H} = \operatorname{span}_{\mathbb{R}}\left\{ x \mapsto \sin(x_1), x \mapsto \sin(x_2), x \mapsto \cos(x_1), x \mapsto \cos(x_2)  \right\}.
	\end{equation}
	All choices of finite $\mathcal{K} \subset \mathbb{Z}^2 \setminus \{0\}$ with $\operatorfont{span}_{\mathbb{Z}}(\mathcal{K}) = \mathbb{Z}^2$ are allowed. But, for different~$\mathcal{K}$ the control spaces obtained in the end may be different, as well. 
	
	In view of \Cref{remark:kappa}, by taking $|\kappa| > 0$ in \eqref{equation:ObservableVF} small, we select in the sense of  \Cref{definition:ovf} with $T = T^{\star}/2$ a divergence-free generating vector field
	\[
		\widetilde{u}^{\star}\colon \mathbb{T}^2\times[0, 	T^{\star}/2]\longrightarrow\mathbb{R}^2
	\]
	such that
	\begin{gather}\label{equation:propchimu1}
		\bigcup\limits_{\substack{s,t\in[0,T^{\star}/2], \\ S \in \mathbb{R}^2}} \Phi^{\widetilde{u}^{\star}(\cdot-S,\cdot)}(\operatorname{supp}(\mu),s,t) \subset \mathcal{O},
	\end{gather}
	where $\mu$, $\chi$, and $\mathcal{O}$ are fixed via \Cref{lemma:retf} as described in~\Cref{subsection:convection}. This is possible as $\operatorname{supp}(\mu) \subset \mathcal{O}$ and $\mathcal{O}$ is open.
	\begin{rmrk}
		The choice of the parameter~$\kappa$, which ensures \eqref{equation:propchimu1}, is universal; it depends only on $\omegaup$, and thus is independent of all prescribed data in \Cref{theorem:main} and~\Cref{theorem:main2}. The importance of this choice will become apparent in the proofs of \Cref{lemma:f1} and \Cref{theorem:locfinthm}.
	\end{rmrk}
	Because the function~$\phi$ appearing in \Cref{definition:ovf} with $T = T^{\star}/2$ satisfies $\phi(T^{\star}/2) = 0$, we have $\widetilde{u}^{\star}(\cdot, 0) = \widetilde{u}^{\star}(\cdot, T^{\star}/2) = 0$.
	This allows us now to define the profile
	\begin{equation}\label{equation:defustar}
		\overline{u}^{\star}(x,t) \coloneq \begin{cases}
			\widetilde{u}^{\star}(x,t) & \mbox{ if } t \in [0,T^{\star}/2],\\
			-\widetilde{u}^{\star}(x,T^{\star}-t) & \mbox{ if } t \in [T^{\star}/2,  T^{\star}],
		\end{cases}
	\end{equation}
	noting that
	\begin{equation}\label{equation:rpus}
		\Phi^{\overline{u}^{\star}}(x,0,t) = \Phi^{\overline{u}^{\star}}(x,0,T^{\star}-t)
	\end{equation}
	for all $(x,t) \in \mathbb{T}^2\times[0,T^{\star}]$. Indeed, both sides in \eqref{equation:rpus} are equal at $t = T^{\star}/2$ and solve the same well-posed differential equation.
	
	The statement of \Cref{lemma:lineareverywheresupported_cpam} remains true when considering convection along~$\overline{u}^{\star}$ instead of~$\overline{u}$ defined via \Cref{definition:ovf}.
	\begin{lmm}\label{lemma:lineareverywheresupported}
		Given $m \in \mathbb{N}$, $v_0, v_1 \in H^m$, and $\varepsilon > 0$, there exists a control $g^{\star} \in L^2((0,T^{\star}); \mathscr{H})$ such that the solution $v \in C^0([0,T^{\star}];H^m)$ to
		\begin{equation}\label{equation:laes1}
			\begin{gathered}
				\partial_t v + (\overline{u}^{\star} \cdot \nabla) v = g^{\star}, \\
				v(\cdot, 0) = v_0
			\end{gathered}
		\end{equation}
		satisfies
		\begin{equation}\label{equation:laes2}
			\|v(\cdot, T^{\star})-v_1\|_{m} < \varepsilon.
		\end{equation}
	\end{lmm}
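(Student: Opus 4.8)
The plan is to \emph{fold} the statement of \Cref{lemma:lineareverywheresupported_cpam}, which applies to the generating field $\widetilde{u}^{\star}$ on $[0,T^{\star}/2]$, over to the field $\overline{u}^{\star}$ on $[0,T^{\star}]$ by exploiting the time-reversal symmetry built into $\overline{u}^{\star}$ through \eqref{equation:defustar} and the resulting flow identity \eqref{equation:rpus}. First I would record the well-posedness of \eqref{equation:laes1} in $C^0([0,T^{\star}];H^m)$: the drift $\overline{u}^{\star}$ is smooth in space and $W^{1,2}$ in time (it is glued from $\widetilde{u}^{\star}$, which vanishes at the endpoints $0$ and $T^{\star}/2$), and any $g^{\star}\in L^2((0,T^{\star});\mathscr{H})$ lies in $L^2((0,T^{\star});H^m)$ for every $m$; since $\overline{u}^{\star}$ is divergence-free and the Fourier modes spanning $\mathscr{H}$ have zero average, the solution stays in $H^m$. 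By linearity, I decompose $v=v^{\flat}+v^{\sharp}$, where $v^{\flat}$ solves \eqref{equation:laes1} with datum $v_0$ and $g^{\star}=0$, and $v^{\sharp}$ solves it with datum $0$ and forcing $g^{\star}$ still to be chosen.

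For $v^{\flat}$ I use that the flow over the full interval is trivial: taking $t=T^{\star}$ in \eqref{equation:rpus} gives $\Phi^{\overline{u}^{\star}}(x,0,T^{\star})=\Phi^{\overline{u}^{\star}}(x,0,0)=x$, so, by the semigroup property of flows, $\Phi^{\overline{u}^{\star}}(\cdot,T^{\star},0)=\mathrm{id}$ as well, whence $v^{\flat}(x,T^{\star})=v_0(\Phi^{\overline{u}^{\star}}(x,T^{\star},0))=v_0(x)$. It therefore remains to choose $g^{\star}$ so that $\|v^{\sharp}(\cdot,T^{\star})-(v_1-v_0)\|_m<\varepsilon$. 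Next I compute the unforced evolution of $v^{\sharp}$ over $[T^{\star}/2,T^{\star}]$. Writing $A\coloneq\Phi^{\overline{u}^{\star}}(\cdot,0,T^{\star}/2)$, the identity $\Phi^{\overline{u}^{\star}}(\cdot,0,T^{\star})=\mathrm{id}$ together with the composition rule $\Phi^{\overline{u}^{\star}}(\Phi^{\overline{u}^{\star}}(x,0,T^{\star}/2),T^{\star}/2,T^{\star})=\Phi^{\overline{u}^{\star}}(x,0,T^{\star})$ forces $\Phi^{\overline{u}^{\star}}(\cdot,T^{\star}/2,T^{\star})=A^{-1}$, hence $\Phi^{\overline{u}^{\star}}(\cdot,T^{\star},T^{\star}/2)=A$, so that
\begin{equation*}
	v^{\sharp}(x,T^{\star})=v^{\sharp}\big(A(x),T^{\star}/2\big),\qquad x\in\mathbb{T}^2 .
\end{equation*}
Since $\overline{u}^{\star}$ is smooth in space and divergence-free, $A$ is a $C^{\infty}$ volume-preserving diffeomorphism of $\mathbb{T}^2$ depending only on $\omegaup$; consequently $\widetilde{w}\coloneq(v_1-v_0)\circ A^{-1}=(v_1-v_0)\circ\Phi^{\overline{u}^{\star}}(\cdot,T^{\star}/2,0)$ belongs to $H^m$ (zero average is preserved under $A^{-1}$), satisfies $\widetilde{w}\circ A=v_1-v_0$, and one has the change-of-variables bound $\|f\circ A\|_m\le C_A\|f\|_m$ with $C_A$ depending only on $A$.

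Finally, on $[0,T^{\star}/2]$ one has $\overline{u}^{\star}=\widetilde{u}^{\star}$, which is generating in the sense of \Cref{definition:ovf} with $T=T^{\star}/2$, and $v^{\sharp}$ solves there $\partial_t v^{\sharp}+(\widetilde{u}^{\star}\cdot\nabla)v^{\sharp}=g^{\star}$ with $v^{\sharp}(\cdot,0)=0$; this is exactly the setting of \Cref{lemma:lineareverywheresupported_cpam}. Applying that lemma with target $\widetilde{w}\in H^m$ and accuracy $\varepsilon/C_A$ yields $g^{\star}\in L^2((0,T^{\star}/2);\mathscr{H})$, which I extend by zero to $L^2((0,T^{\star});\mathscr{H})$, such that $\|v^{\sharp}(\cdot,T^{\star}/2)-\widetilde{w}\|_m<\varepsilon/C_A$. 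Composing with $A$,
\begin{equation*}
	\|v(\cdot,T^{\star})-v_1\|_m=\|v^{\sharp}(\cdot,T^{\star})-(v_1-v_0)\|_m=\big\|(v^{\sharp}(\cdot,T^{\star}/2)-\widetilde{w})\circ A\big\|_m\le C_A\,\|v^{\sharp}(\cdot,T^{\star}/2)-\widetilde{w}\|_m<\varepsilon ,
\end{equation*}
which is \eqref{equation:laes2}. The only nonroutine points are extracting the two flow identities $\Phi^{\overline{u}^{\star}}(\cdot,0,T^{\star})=\mathrm{id}$ and $\Phi^{\overline{u}^{\star}}(\cdot,T^{\star},T^{\star}/2)=A$ from \eqref{equation:rpus}, and keeping track of the constant $C_A$ in the Sobolev change-of-variables estimate; the well-posedness and linearity inputs are standard transport-equation theory.
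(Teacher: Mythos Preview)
Your proof is correct and uses the same ingredients as the paper---linearity, the flow identity \eqref{equation:rpus}, and \Cref{lemma:lineareverywheresupported_cpam}---but arranges them differently. The paper sets $g^{\star}=0$ on $[0,T^{\star}/2]$ (so the solution with $v_0=0$ stays zero) and applies \Cref{lemma:lineareverywheresupported_cpam} on the \emph{second} half $[T^{\star}/2,T^{\star}]$, reaching $v_1$ directly without any flow composition; the general case is then handled by the same superposition you use. You instead force on the \emph{first} half, where $\overline{u}^{\star}=\widetilde{u}^{\star}$ is literally the generating field of \Cref{definition:ovf}, and then let the free evolution on $[T^{\star}/2,T^{\star}]$ carry the intermediate state to the endpoint via composition with $A=\Phi^{\overline{u}^{\star}}(\cdot,0,T^{\star}/2)$. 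Your route has the conceptual advantage that \Cref{lemma:lineareverywheresupported_cpam} is invoked exactly as stated, for $\widetilde{u}^{\star}$ itself, whereas the paper implicitly uses it for the time-reversed drift $-\widetilde{u}^{\star}(\cdot,T^{\star}-\cdot)$; the price you pay is having to pre-compose the target with $A^{-1}$ and track the change-of-variables constant $C_A$, which the paper's arrangement avoids entirely.
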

	\begin{proof}
		First, we assume $v_0 = 0$ and set $g^{\star}(\cdot, t) = 0$ for $t \in [0, T^{\star}/2]$. As a result, it holds $v(\cdot, T^{\star}/2) = 0$. To determine~$g^{\star}(\cdot, t)$ for $t \in (T^{\star}/2, T^{\star}]$, we apply \Cref{lemma:lineareverywheresupported_cpam} with $T = T^{\star}/2$ and target state $v_1$. The general case $v_0 \neq 0$ follows from a linear superposition principle: add the uncontrolled solution~$\widetilde{v}$ with initial state~$v_0$ to a controlled solution $\widehat{v}$ with zero initial state and target state $v_1 - v_0$, noting that $\widetilde{v}(\cdot, T^{\star}) = v_0$ by \eqref{equation:rpus}.
	\end{proof}
	
	\begin{rmrk}\label{remark:bop}
		It is known, {\it e.g.}, as explained in \cite{NersesyanRissel2024a} or \cite[Proof of Theorem 2.3]{Nersesyan2021}, that for given $\varepsilon > 0$ and a bounded subset $B$ of $H^{m+1}$ with $v_0, v_1 \in B$, one can choose the control in \Cref{lemma:lineareverywheresupported} of the form $g^{\star} = \mathcal{L}_{\varepsilon}(v_1 - v_0)$ with a bounded linear operator $\mathcal{L}_{\varepsilon}\colon H^{m}\longrightarrow L^2((0,T^{\star});\mathscr{H})$, as long as \eqref{equation:laes2} is replaced by
		\begin{equation*}
			\|v(\cdot, T^{\star})-v_1\|_{m} < \varepsilon \|v_0 -v_1\|_{m+1}.
		\end{equation*}
		Let us briefly recall the argument when $v_0 = 0$; the general case follows by superposition as in the proof of \Cref{lemma:lineareverywheresupported}. Hereto, consider the resolving operator $\mathcal{A}$ associating with $g^{\star} \in L^2((0,T^{\star});\mathscr{H})$ the solution to $\partial_t v + (\overline{u}^{\star} \cdot \nabla) v = g^{\star}$ with zero initial data $v(\cdot, 0) = 0$.
		Further, denote by $\mathcal{A}_{T^{\star}}$ the restriction $g \mapsto (\mathcal{A}g)(T^{\star}) \in H^m$. In particular, due to \Cref{lemma:lineareverywheresupported}, the range of $\mathcal{A}_{T^{\star}}$ is dense in $H^m$. Hence, by utilizing \cite[Proposition 2.6]{KuksinNersesyanShirikyan2020}, the desired operator $\mathcal{L}_{\varepsilon}$ can be chosen as a continuous approximate right inverse of $\mathcal{A}_{T^{\star}}$.
	\end{rmrk}

	By the Helmholtz-Hodge decomposition, the vector field $\overline{u}^{\star}$ admits a stream function $\overline{\phi}^{\star}$ in $\mathbb{T}^2\times[0,T^{\star}]$. Indeed, from \cref{equation:ObservableVF,equation:defustar} it follows that $\overline{u}^{\star}$ is divergence-free and has zero average. Namely,
	\begin{equation}\label{equation:strfus}
		\overline{u}^{\star}(x,t) = \nabla^{\perp}\overline{\phi}^{\star}(x,t) = \nabla^{\perp} \kappa\sum_{\ell \in \mathcal{K}} \left(\psi_{\ell}^{\operatorname{c},\star}(t) s_{\ell}(x) - \psi_{\ell}^{\operatorname{s},\star}(t) c_{\ell}(x)\right)
	\end{equation}
	for $(x,t) \in \mathbb{T}^2\times[0,T^{\star}]$ and $\psi_{\ell}^{\operatorname{s},\star}, \psi_{\ell}^{\operatorname{c},\star}$ obtained in the sense of \cref{equation:ObservableVF,equation:defustar}. Moreover, recall that we fixed in \Cref{subsection:convection} the translation vectors~$(S_i)_{i \in \{1,\dots,M\}}\subset\mathbb{R}^2$ such that
	\begin{equation}\label{equation:Si}
		\mathcal{O} = \mathcal{O}_i + S_i
	\end{equation} 
	for each $i \in \{1,\dots,M\}$.
	Then, by the definition of $\overline{u}^{\star}$ in \eqref{equation:defustar}, one has analogues of \eqref{equation:rpus} for flows arising from certain variations of the vector field
	\[
	(x, t) \mapsto \nabla^{\perp}[\chi(x) \overline{\phi}^{\star}(x,t)],
	\]
	where~$\chi$ is the cutoff introduced in \Cref{subsection:convection}. For instance, it holds
	\begin{equation}\label{equation:rpus3}
		\begin{gathered}
			\Phi^{\nabla^{\perp}[\chi(\cdot) \overline{\phi}^{\star}(\cdot-S,\cdot)]}(x,0,s) = \Phi^{\nabla^{\perp}[\chi(\cdot) \overline{\phi}^{\star}(\cdot-S,\cdot)]}(x,0,T^{\star}-s),\\
			\Phi^{\nabla^{\perp}[\chi (\cdot) \overline{\phi}^{\star}(\cdot-S_i, \cdot - t^i_a)]}(x,t^i_a,t) = \Phi^{\nabla^{\perp}[\chi (\cdot) \overline{\phi}^{\star}(\cdot-S_i, \cdot - t^i_a)]}(x,t^i_a,t^i_b-(t-t^i_a))
		\end{gathered}
	\end{equation}
	for all $i \in \{1,\dots,M\}$, $S \in \mathbb{R}^2$, $x \in \mathbb{T}^2$, $s \in [0,T^{\star}]$, and $t \in [t^i_a, t^i_b]$. Indeed, both sides in each line of \eqref{equation:rpus3} satisfy the same differential equations with identical states at $s = T^{\star}/2$ and $t = t^i_a+T^{\star}/2$, respectively.
	
	The next lemma is a consequence of \eqref{equation:propchimu1} for sufficiently small $|\kappa| > 0$. However, in view of \Cref{remark:kappa} and \eqref{equation:strfus}, it is shorter to argue that the lemma follows from a new (smaller) choice of $|\kappa| > 0$ which depends only on $\omegaup$. 
	\begin{lmm}\label{lemma:propchimu1B}
		There exists $r \in (0,L)$ with $\chi = 1$ on the $r$-neighborhood $\mathcal{N}_r$ of the reference square~$\mathcal{O}$ with side-length $L$ (defined in \Cref{subsection:convection}) and such that~$\mathcal{N}_r$ is also a neighborhood of $\bigcup_{s,t\in[0,T^{\star}], \,S \in \mathbb{R}^2} \Phi^{\nabla^{\perp}[\chi(\cdot)\overline{\phi}^{\star}(\cdot-S,\cdot)]}(\operatorname{supp}(\mu),s,t)$.
	\end{lmm}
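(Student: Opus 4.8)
The plan is to unfold the one-line argument indicated just above the statement: the stream function $\overline{\phi}^{\star}$ depends linearly on the (redundant) parameter $\kappa$ from \Cref{definition:ovf}, so a sufficiently small choice of $|\kappa|$ — still dictated solely by $\omegaup$ — shrinks the relevant flow displacements below the size of the region on which $\chi\equiv 1$.

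First I would fix the radius $r$ directly from the cutoff. Recall from \Cref{subsection:convection} that $\chi$ was chosen with $\chi\equiv 1$ on an open set containing the compact square $\overline{\mathcal{O}}$; hence there is $r>0$ with $\chi\equiv 1$ on $\mathcal{N}_r=\{x\in\mathbb{T}^2:\operatorname{dist}(x,\mathcal{O})<r\}$. This already yields the first assertion, and this $r$ depends only on $\omegaup$ and is fixed \emph{before} $\kappa$ is selected. Next I would record a displacement bound. Writing $\overline{u}^{\star}=\kappa\,\overline{v}$ with $\overline{v}$ independent of $\kappa$ (pull $\kappa$ out of \eqref{equation:ObservableVF} and propagate it through \eqref{equation:defustar}), the stream function from \eqref{equation:strfus} obeys $\overline{\phi}^{\star}=\kappa\,\overline{\psi}$ for a profile $\overline{\psi}$ independent of $\kappa$ (the Helmholtz--Hodge decomposition being linear), and $\sup_{t\in[0,T^{\star}]}\|\overline{\psi}(\cdot,t)\|_{C^1(\mathbb{T}^2)}<\infty$. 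Since $\chi$ is a fixed smooth function and translations preserve $C^1$-norms, the quantity
\[
C_{\omegaup}:=\sup_{S\in\mathbb{R}^2}\ \sup_{(x,t)\in\mathbb{T}^2\times[0,T^{\star}]}\bigl|\nabla^{\perp}[\chi(\cdot)\,\overline{\psi}(\cdot-S,\cdot)](x,t)\bigr|
\]
is finite and depends only on $\omegaup$; consequently $\bigl|\nabla^{\perp}[\chi(\cdot)\overline{\phi}^{\star}(\cdot-S,\cdot)](x,t)\bigr|\le |\kappa|\,C_{\omegaup}$ for all $x,t,S$. Inserting this into the integral form of \eqref{equation:flow} gives
\[
\bigl|\Phi^{\nabla^{\perp}[\chi(\cdot)\overline{\phi}^{\star}(\cdot-S,\cdot)]}(x,s,t)-x\bigr|\le T^{\star}C_{\omegaup}\,|\kappa|
\]
for all $x\in\mathbb{T}^2$, $S\in\mathbb{R}^2$, and $s,t\in[0,T^{\star}]$, where $T^{\star}=1/(3M+2)$ is, like $C_{\omegaup}$ and $r$, fixed by $\omegaup$ alone.

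Finally I would shrink $|\kappa|$. Invoking \Cref{remark:kappa} once more, choose $|\kappa|>0$ (depending only on $\omegaup$) so small that $T^{\star}C_{\omegaup}\,|\kappa|<r$; since this only strengthens the displacement bounds, it does not disturb \eqref{equation:propchimu1}, which was obtained by the very same estimate applied to the cutoff-free field $\widetilde{u}^{\star}(\cdot-S,\cdot)$. Because $\operatorname{supp}(\mu)\subset\mathcal{O}$, for every $x\in\operatorname{supp}(\mu)$ and $s,t\in[0,T^{\star}]$ the displacement bound yields $\operatorname{dist}\bigl(\Phi^{\nabla^{\perp}[\chi(\cdot)\overline{\phi}^{\star}(\cdot-S,\cdot)]}(x,s,t),\mathcal{O}\bigr)\le T^{\star}C_{\omegaup}\,|\kappa|<r$, so the set $\bigcup_{s,t\in[0,T^{\star}],\,S\in\mathbb{R}^2}\Phi^{\nabla^{\perp}[\chi(\cdot)\overline{\phi}^{\star}(\cdot-S,\cdot)]}(\operatorname{supp}(\mu),s,t)$ lies in the open set $\mathcal{N}_r$, which is therefore a neighborhood of it; together with $\chi\equiv 1$ on $\mathcal{N}_r$ this is the claim. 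I expect no genuine obstacle here: the only points needing care are the ordering of the choices ($r$ must be read off from $\chi$, fixed in \Cref{subsection:convection}, before $\kappa$ is chosen) and the observation that passing to a smaller $|\kappa|$ leaves \eqref{equation:propchimu1} intact, which is immediate since a smaller $|\kappa|$ only decreases flow displacements.
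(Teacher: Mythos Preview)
Your proposal is correct and faithfully unfolds the one-line argument the paper gives just before the lemma: fix $r$ from the choice of $\chi$, then (possibly) shrink $|\kappa|$ so that the uniform displacement bound $|\Phi-x|\le T^{\star}C_{\omegaup}|\kappa|$ forces the union to sit inside $\mathcal{N}_r$. The only cosmetic point is that the paper also mentions an alternative route via \eqref{equation:propchimu1} directly (using that on $\mathcal{N}_r$ the cutoff-modified field agrees with $\overline{u}^{\star}(\cdot-S,\cdot)$), but your choice to work with the second, more direct argument is exactly what the paper recommends as ``shorter''.
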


	\subsection{Definition of \texorpdfstring{$\overline{U}$ based on $\overline{y}$ and $\overline{u}^{\star}$}{the main convection profile}}\label{subsection:defbU}
	The following function $\overline{U} \in W^{1,2}((0,1);C^{\infty}(\mathbb{T}^2;\mathbb{R}^2))$ will be used in \Cref{subsection:plfc} as a convection profile for linear transport equations steered~by physically localized and finite-dimensional controls:
	\begin{equation}\label{equation:Ubar}
		\overline{U}(x,t)\coloneq \overline{y}(x,t) + \sum_{i=1}^M \mathbb{I}_{[t^i_a, t^i_b]}(t) \nabla^{\perp}[\chi(x)\overline{\phi}^{\star}(x-S_i, t-t^i_a)]
	\end{equation}
	for $(x,t) \in \mathbb{T}^2\times[0,1]$. Aside of being divergence-free in $\mathbb{T}^2$, the vector field $\overline{U}(\cdot, t)$ is a gradient in a neighborhood of $\mathbb{T}^2\setminus \omegaup$ for each $t \in [0,1]$. Moreover, due to \cref{equation:propybar1,equation:propybar2,equation:rpus3}, the flow of $\overline{U}$ satisfies
	\begin{equation}\label{equation:clpropU}
		\begin{aligned}
			\Phi^{\overline{U}}(x,0, t) = \Phi^{\overline{U}}(x, t^M_c, t) =  \Phi^{\overline{U}}(x,t^l_a, t^l_b) = \Phi^{\overline{U}}(x,t^{l-1}_c, t^{l}_c) = \Phi^{\overline{U}}(x,0,1) = x
		\end{aligned}
	\end{equation}
	for all $x \in \mathbb{T}^2$, $t \in [0, T^{\star}]$, and $l \in \{1,\dots,M\}$. 
	
	The following two lemmas provide basic properties of the considered flows related to~$\overline{U}$.
	\begin{lmm}\label{lemma:f1}
		Given $x \in \mathbb{T}^2$ and $i \in \{1,\dots,M\}$ such that $\mu(\Phi^{\overline{U}}(x,0,t^i_a+s)) \neq 0$ or $\mu(\Phi^{\overline{u}^{\star}(\cdot-S_i, \cdot)}(x+S_i,0,s)) \neq 0$ are satisfied for at least one~$s \in [0,T^{\star}]$, it holds
		\[
		\Phi^{\overline{U}}(x,0,t^i_a+t) = \Phi^{\overline{u}^{\star}(\cdot-S_i, \cdot)}(x+S_i,0,t)
		\]
		for all $t \in [0, T^{\star}]$.
	\end{lmm}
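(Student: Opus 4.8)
The plan is to compare the two flows by showing they solve the same ODE with the same initial datum on $[0,T^\star]$, using the hypothesis only to restrict the region of the torus where the comparison actually needs to hold. First I would record that both curves start at the same point: $\Phi^{\overline{U}}(x,0,t^i_a) = x$ by \eqref{equation:clpropU}, and $\Phi^{\overline{u}^{\star}(\cdot-S_i,\cdot)}(x+S_i,0,0) = x+S_i$; so after the fixed shift by $S_i$ the two trajectories agree at $t=0$. On the interval $[t^i_a, t^i_b]$ the profile $\overline{U}$ reduces, by \eqref{equation:propybar1} and the definition \eqref{equation:Ubar}, to $\nabla^{\perp}[\chi(\cdot)\overline{\phi}^{\star}(\cdot-S_i, \cdot - t^i_a)]$, which is the localized (cutoff by $\chi$) and shifted version of the stream-function representation \eqref{equation:strfus} of $\overline{u}^{\star}$. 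So the only discrepancy between the vector field driving $\Phi^{\overline{U}}$ near $t^i_a+s$ and the vector field driving the shifted $\Phi^{\overline{u}^{\star}(\cdot-S_i,\cdot)}$ is the presence of the cutoff $\chi$; on the set where $\chi \equiv 1$ the two vector fields coincide identically.

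The core of the argument is then a \emph{confinement} claim: whenever the hypothesis holds for some $s_0\in[0,T^\star]$, the entire trajectory $\{\Phi^{\overline{u}^{\star}(\cdot-S_i,\cdot)}(x+S_i,0,t)\,:\,t\in[0,T^\star]\}$ stays inside the $r$-neighborhood $\mathcal{N}_r$ of the reference square $\mathcal{O}$ on which $\chi\equiv 1$, as furnished by \Cref{lemma:propchimu1B}. Indeed, if $\mu(\Phi^{\overline{u}^{\star}(\cdot-S_i,\cdot)}(x+S_i,0,s_0))\neq 0$, then $\Phi^{\overline{u}^{\star}(\cdot-S_i,\cdot)}(x+S_i,0,s_0)$ lies in $\Phi^{\overline{u}^{\star}(\cdot-S_i,\cdot)}(\operatorname{supp}(\mu),s_0,s_0)=\operatorname{supp}(\mu)$; running the flow both backward and forward from there, \Cref{lemma:propchimu1B} (applied with the group property of $\Phi^{\overline{u}^{\star}(\cdot-S,\cdot)}$) keeps the whole orbit in $\mathcal{N}_r$. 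Since $\chi\equiv 1$ on $\mathcal{N}_r$, the shifted vector field $\nabla^{\perp}[\chi(\cdot)\overline{\phi}^{\star}(\cdot-S_i,\cdot)]$ agrees with $\overline{u}^{\star}(\cdot-S_i,\cdot)$ along this orbit, hence $t\mapsto \Phi^{\overline{u}^{\star}(\cdot-S_i,\cdot)}(x+S_i,0,t)$ also solves $\dot{z}=\nabla^{\perp}[\chi(z)\overline{\phi}^{\star}(z-S_i,t)]=\overline{U}(z,t^i_a+t)$ with initial value $x$; by uniqueness (Cauchy--Lipschitz, \eqref{equation:flow}) it equals $\Phi^{\overline{U}}(x,0,t^i_a+t)$. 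For the other alternative, if instead $\mu(\Phi^{\overline{U}}(x,0,t^i_a+s_0))\neq 0$, I would first deduce that $\Phi^{\overline{U}}(x,0,t^i_a+s_0)\in\operatorname{supp}(\mu)\subset\mathcal{O}$, then observe — using the same confinement via \Cref{lemma:propchimu1B} applied now to the $\Phi^{\overline{U}}$-trajectory, whose driving field on $[t^i_a,t^i_b]$ is exactly $\nabla^{\perp}[\chi(\cdot)\overline{\phi}^{\star}(\cdot-S_i,\cdot-t^i_a)]$ — that this trajectory stays in $\mathcal{N}_r$, so the cutoff is again inert, and uniqueness gives the identity.

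I expect the main obstacle to be the bookkeeping of the confinement step: one must verify that \Cref{lemma:propchimu1B} genuinely covers \emph{both} the forward and backward portions of the orbit starting from any point of $\operatorname{supp}(\mu)$ at any intermediate time $s_0$, not merely orbits issued at $s=0$; this is where the formulation of \Cref{lemma:propchimu1B} in terms of $\Phi^{\nabla^{\perp}[\chi(\cdot)\overline{\phi}^{\star}(\cdot-S,\cdot)]}(\operatorname{supp}(\mu),s,t)$ over all $s,t\in[0,T^\star]$, together with the smallness of $|\kappa|$ ensuring \eqref{equation:propchimu1}, is used crucially. A secondary point requiring care is that the two alternatives in the hypothesis must be reconciled: once the identity $\Phi^{\overline{U}}(x,0,t^i_a+t)=\Phi^{\overline{u}^{\star}(\cdot-S_i,\cdot)}(x+S_i,0,t)$ is established from either starting assumption, the $\mu$-value along one trajectory equals the $\mu$-value along the other, so the two cases are in fact consistent and the conclusion holds on the full interval $[0,T^\star]$.
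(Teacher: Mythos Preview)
There is a genuine gap at the very first step. You claim that $\Phi^{\overline{U}}(x,0,t^i_a) = x$ by \eqref{equation:clpropU}, but \eqref{equation:clpropU} only asserts $\Phi^{\overline{U}}(x,0,t)=x$ for $t\in[0,T^{\star}]$, and $t^i_a\ge 2T^{\star}$. In fact the correct value is $\Phi^{\overline{U}}(x,0,t^i_a)=x+S_i$, which is exactly the initial datum $\Phi^{\overline{u}^{\star}(\cdot-S_i,\cdot)}(x+S_i,0,0)$ of the right-hand side, so no ``fixed shift by $S_i$'' is needed or allowed in the conclusion. With your claimed value $x$, the two curves would already disagree at $t=0$, contradicting the lemma.

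The deeper issue is that $\Phi^{\overline{U}}(x,0,t^i_a)=x+S_i$ is \emph{not} true for arbitrary $x$: property \eqref{equation:propybar3} of \Cref{lemma:retf} only guarantees it for $x$ in the $L$-neighborhood of $\mathcal{O}_i$. The paper therefore uses the hypothesis first to locate $x$. In Case~1 ($\mu(\Phi^{\overline{U}}(x,0,t^i_a+s))\neq 0$), one runs the $\nabla^{\perp}[\chi\,\overline{\phi}^{\star}(\cdot-S_i,\cdot)]$-flow backward from time $s$ to $0$ and applies \Cref{lemma:propchimu1B} to place $\Phi^{\overline{U}}(x,0,t^i_a)$ in a neighborhood of $\mathcal{O}$; then \eqref{equation:propybar3} (read backwards) forces $x$ into the $L$-neighborhood of $\mathcal{O}_i$, and only then does $\Phi^{\overline{U}}(x,0,t^i_a)=x+S_i$ follow. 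Your confinement argument via \Cref{lemma:propchimu1B} for the rest of the orbit is correct in spirit, but without this localization step you never match the initial conditions, so the uniqueness argument cannot close. You should replace the appeal to \eqref{equation:clpropU} by an explicit invocation of \eqref{equation:propybar3}, preceded by the argument that the hypothesis pins $x$ near $\mathcal{O}_i$.
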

	\begin{proof}
		By \cref{equation:Si,equation:clpropU,equation:Ubar} and \Cref{lemma:retf}, one has $\Phi^{\overline{U}}(z,0,t^i_a) = z + S_i$ for each $z$ in the $L$-neighborhood of $\mathcal{O}_i$. 
		Moreover, from the definition of $\overline{U}$ in \eqref{equation:Ubar}, one can infer
		\[
		\Phi^{\overline{U}}(z,t^i_a+r,t^i_a + t) = \Phi^{\nabla^{\perp}[\chi(\cdot)\overline{\phi}^{\star}(\cdot-S_i, \cdot)]}(z,r,t)
		\]
		for all $z \in \mathbb{T}^2$ and $r, t \in [0,T^{\star}]$. 
		
		\paragraph*{Case 1.} If there exists a number $s \in [0,T^{\star}]$ such that $\mu(\Phi^{\overline{U}}(x,0,t^i_a+s)) \neq 0$, by \Cref{lemma:propchimu1B} this means that $\Phi^{\overline{U}}(x,0,t^i_a)$ lies in a $L$-neighborhood of $\mathcal{O}$. As a consequence of \Cref{lemma:retf}, the point $x$ then belongs to an~$L$-neighborhood of~$\mathcal{O}_i$, which yields $\Phi^{\nabla^{\perp}[\chi(\cdot)\overline{\phi}^{\star}(\cdot-S_i, \cdot)]}(x+S_i,0,s) \in \operatorname{supp}(\mu)$ because of
		\begin{equation*}
			\begin{aligned}
				\operatorname{supp}(\mu) & \ni \Phi^{\overline{U}}(x,0,t^i_a+s) \\
				& = \Phi^{\overline{U}}(\Phi^{\overline{U}}(x,0,t^i_a),t^i_a,t^i_a+ s) \\
				& = \Phi^{\overline{U}}(x+S_i,t^i_a,t^i_a + s) \\
				& = \Phi^{\nabla^{\perp}[\chi(\cdot)\overline{\phi}^{\star}(\cdot-S_i, \cdot)]}(x+S_i,0,s).
			\end{aligned}
		\end{equation*}
		Therefore, it follows from \Cref{lemma:propchimu1B} that $\chi(\Phi^{\nabla^{\perp}[\chi(\cdot)\overline{\phi}^{\star}(\cdot-S_i, \cdot)]}(x+S_i,0,t)) = 1$
		for all $t \in [0, T^{\star}]$. As a result, the well-posed problem $\Phi'(t) = \overline{u}^{\star}(\Phi(t)-S_i, t)$ with~$\Phi(0) = x+S_i$ is satisfied on $[0, T^{\star}]$ by $ t \mapsto \Phi^{\overline{u}^{\star}(\cdot-S_i, \cdot)}(x+S_i,0,t)$ and also by $t \mapsto \Phi^{\nabla^{\perp}[\chi(\cdot)\overline{\phi}^{\star}(\cdot-S_i, \cdot)]}(x+S_i,0,t)$; thus, these two functions are equal.

		\paragraph*{Case 2.} If there exists $s \in [0,T^{\star}]$ such that $\mu(\Phi^{\overline{u}^{\star}(\cdot-S_i, \cdot)}(x+S_i,0,s)) \neq 0$, then \cref{equation:propchimu1,equation:defustar} ensure that $\chi(z) = 1$ for all $z$ from a neighborhood of $\cup_{t\in[0,T^{\star}]}\Phi^{\overline{u}^{\star}(\cdot-S_i, \cdot)}(x+S_i,0,t)$.
		This implies
		\[
		\Phi^{\overline{u}^{\star}(\cdot-S_i, \cdot)}(x+S_i,0,t) = \Phi^{\nabla^{\perp}[\chi(\cdot)\overline{\phi}^{\star}(\cdot-S_i, \cdot)]}(x+S_i,0,t)
		\]
		for $t \in [0, T^{\star}]$. Hence, the assertion can be concluded by analysis similar to the previous case.
	\end{proof}
	
	\begin{lmm}\label{lemma:f2}
		Let $T > 0$ and $v \colon \mathbb{T}^2\times[0,T]\longrightarrow\mathbb{R}^2$ sufficiently regular such that the flow $\Phi^{v}$ is well-defined. Then, one has
		\[
		\Phi^{v(\cdot-S, \cdot)}(x+S,0,t) = \Phi^{v}(x,0,t)+S
		\]
		for all $S \in \mathbb{R}^2$ and~$(x,t) \in \mathbb{T}^2\times[0,T]$.
	\end{lmm}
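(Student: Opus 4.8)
The plan is to verify that the right-hand side $t \mapsto \Phi^{v}(x,0,t)+S$ solves the very initial value problem that defines the left-hand side, and then invoke uniqueness. First I would observe that, since the torus is a quotient of $\mathbb{R}^2$, the translated field $w(y,t) \coloneq v(y - S, t)$ is well-defined on $\mathbb{T}^2\times[0,T]$ and inherits from $v$ the hypotheses needed for the Cauchy--Lipschitz theorem (continuity in $(y,t)$ and Lipschitz continuity in $y$ with a time-independent constant, because translation is an isometry). Hence the flow $\Phi^{w} = \Phi^{v(\cdot - S,\cdot)}$ is well-defined and the object $\Phi^{v(\cdot-S,\cdot)}(x+S,0,\cdot)$ in the statement makes sense.

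Next, set $\Psi(t) \coloneq \Phi^{v}(x,0,t)+S$ for $t \in [0,T]$. Then $\Psi(0) = \Phi^{v}(x,0,0)+S = x + S$, and differentiating using \eqref{equation:flow} gives
\[
\frac{\rm d}{{\rm d}t}\Psi(t) = \frac{\rm d}{{\rm d}t}\Phi^{v}(x,0,t) = v\bigl(\Phi^{v}(x,0,t),t\bigr) = v\bigl(\Psi(t)-S,t\bigr) = w\bigl(\Psi(t),t\bigr).
\]
Thus $\Psi$ solves on $[0,T]$ the initial value problem $\frac{\rm d}{{\rm d}t}\Psi = w(\Psi,t)$ with $\Psi(0) = x+S$, which is exactly \eqref{equation:flow} for the field $w = v(\cdot - S,\cdot)$ with starting time $s = 0$ and starting point $x+S$. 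By the uniqueness part of the Cauchy--Lipschitz theorem, $\Psi(t) = \Phi^{v(\cdot-S,\cdot)}(x+S,0,t)$ for all $t \in [0,T]$, which is the claimed identity.

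There is essentially no substantial obstacle here: the statement is just translation equivariance of autonomous-in-the-space-variable ODE flows, and the only thing to be mildly careful about is that the translated field still falls under the Cauchy--Lipschitz framework used to define flows in \eqref{equation:flow}; this is immediate since spatial translation on $\mathbb{T}^2$ preserves both continuity and the Lipschitz constant. One could phrase the argument symmetrically (checking that $t\mapsto \Phi^{v(\cdot-S,\cdot)}(x+S,0,t)-S$ solves the problem defining $\Phi^{v}(x,0,\cdot)$) but the version above suffices.
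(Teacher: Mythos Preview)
Your proof is correct; the paper actually omits a proof of this lemma entirely, treating it as elementary. Your argument---checking that both sides solve the same well-posed initial value problem and invoking Cauchy--Lipschitz uniqueness---is exactly the style used elsewhere in the paper (for instance in verifying \eqref{equation:rpus} and \eqref{equation:rpus3}, and in the proof of \Cref{lemma:f1}), so it fits seamlessly.
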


	\subsection{Controllability of \texorpdfstring{convection along $\overline{U}$}{the main convection problem}}\label{subsection:plfc}
	The goal of this section is to demonstrate approximate controllability of the linear transport equation
	\[
	\partial_t V + (\overline{U} \cdot \nabla) V = G,
	\]
	where $\overline{U}$ is defined via \eqref{equation:Ubar} and $G$ denotes a finite-dimensional physically localized control.

	\begin{thrm}\label{theorem:locfinthm}
		There exists a finite-dimensional space $\mathscr{F}_{\mathscr{t}}$ consisting of smooth zero average functions $\mathbb{T}^2\longrightarrow\mathbb{R}$ supported in $\omegaup$ such that the following statement holds. Given any $m \in \mathbb{N}$, $v_0, v_1 \in H^m$, and $\varepsilon > 0$, there is a control $G \in L^2((0,1); \mathscr{F}_{\mathscr{t}})$ such that the solution $V \in C^0([0,1];H^m)$ to
		\begin{equation}\label{equation:thmp1}
			\begin{gathered}
				\partial_t V + (\overline{U} \cdot \nabla) V = G, \\
				V(\cdot, 0) = v_0
			\end{gathered}
		\end{equation}
		satisfies
		\begin{equation}\label{equation:thmp2}
			\|V(\cdot, 1) - v_1\|_m < \varepsilon.
		\end{equation}
	\end{thrm}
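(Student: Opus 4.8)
The plan is to reduce the statement, via the method of characteristics, to the approximate controllability of the model convection problem \eqref{equation:laes1} along $\overline{u}^{\star}$ (\Cref{lemma:lineareverywheresupported}), by patching a low‑dimensional control for the latter into a physically localized, finite‑dimensional control for \eqref{equation:thmp1}. Concretely: since $\overline{U}(\cdot,t)$ is divergence‑free, the solution of \eqref{equation:thmp1} satisfies $V(x,1)=v_0\big(\Phi^{\overline{U}}(x,1,0)\big)+\int_0^1 G\big(\Phi^{\overline{U}}(x,1,s),s\big)\,ds$, and \eqref{equation:clpropU} gives $\Phi^{\overline{U}}(x,0,1)=x$, hence $\Phi^{\overline{U}}(x,1,0)=x$ and $\Phi^{\overline{U}}(x,1,s)=\Phi^{\overline{U}}(x,0,s)$, so that $V(\cdot,1)-v_0=\int_0^1 G\big(\Phi^{\overline{U}}(\cdot,0,s),s\big)\,ds$. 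Likewise \eqref{equation:rpus} yields $\Phi^{\overline{u}^{\star}}(x,T^{\star},0)=x$, so the solution of \eqref{equation:laes1} with $v_0=0$ equals $\int_0^{T^{\star}}g^{\star}\big(\Phi^{\overline{u}^{\star}}(\cdot,0,\sigma),\sigma\big)\,d\sigma$ at time $T^{\star}$; thus \Cref{lemma:lineareverywheresupported} says exactly that the linear map $\mathcal{R}_{\star}\colon g^{\star}\mapsto\int_0^{T^{\star}}g^{\star}\big(\Phi^{\overline{u}^{\star}}(\cdot,0,\sigma),\sigma\big)\,d\sigma$ has dense range in $H^m$ as $g^{\star}$ runs over $L^2((0,T^{\star});\mathscr{H})$. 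It therefore suffices to produce $\mathscr{F}_{\mathscr{t}}$ and, for each $g^{\star}$, a control $G[g^{\star}]\in L^2((0,1);\mathscr{F}_{\mathscr{t}})$ with $\int_0^1 G[g^{\star}]\big(\Phi^{\overline{U}}(\cdot,0,s),s\big)\,ds=\mathcal{R}_{\star}(g^{\star})$, for then, given $v_0,v_1,\varepsilon$, one chooses $g^{\star}$ with $\|\mathcal{R}_{\star}(g^{\star})-(v_1-v_0)\|_m<\varepsilon$.

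\textbf{The control and the space $\mathscr{F}_{\mathscr{t}}$.} Following the template \eqref{intro:control}, on each window $[t^i_a,t^i_b]$ the leading term of $G[g^{\star}]$ is $\mu(\cdot)\,g^{\star}(\cdot-S_i,\cdot-t^i_a)$. Since $G(\cdot,t)$ must have zero average (so that $V(\cdot,t)\in H^m$), I correct this by subtracting the $\mu$‑proportional average correction $\mu(\cdot)\big(\int_{\mathbb{T}^2}\mu\big)^{-1}\!\int_{\mathbb{T}^2}\mu(y)g^{\star}(y-S_i,\cdot-t^i_a)\,dy$ and by adding $a_i(\cdot-t^i_a)\,\big(\overline{U}(\cdot,\cdot)\cdot\nabla\big)\mu$ with $a_i(\tau)=-\big(\int_{\mathbb{T}^2}\mu\big)^{-1}\!\int_0^{\tau}\!\int_{\mathbb{T}^2}\mu(y)g^{\star}(y-S_i,r)\,dy\,dr$. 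On $[t^i_a,t^i_b]$ one has $\overline{U}(\cdot,t)=\nabla^{\perp}[\chi\,\overline{\phi}^{\star}(\cdot-S_i,t-t^i_a)]$ by \eqref{equation:Ubar} and \eqref{equation:propybar1}, and $\overline{\phi}^{\star}(\cdot,\tau)$ is a linear combination of $s_l,c_l$, so every term of $G[g^{\star}](\cdot,t)$ lies in the span of the smooth, zero‑average, $\omegaup$‑supported profiles $\mu s_l(\cdot-S_i)-\mu\tfrac{\int_{\mathbb{T}^2}\mu\, s_l(\cdot-S_i)}{\int_{\mathbb{T}^2}\mu}$, $\mu c_l(\cdot-S_i)-\mu\tfrac{\int_{\mathbb{T}^2}\mu\, c_l(\cdot-S_i)}{\int_{\mathbb{T}^2}\mu}$, $(\nabla^{\perp}[\chi s_l(\cdot-S_i)]\cdot\nabla)\mu$, $(\nabla^{\perp}[\chi c_l(\cdot-S_i)]\cdot\nabla)\mu$ for $l\in\{1,2\}$, $i\in\{1,\dots,M\}$; I let $\mathscr{F}_{\mathscr{t}}$ be their (finite‑dimensional) span.

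\textbf{Key geometric identity and evaluation of the integral.} The heart of the matter is that for all $x$, all $i$ and all $\sigma\in[0,T^{\star}]$,
\[
\mu\big(\Phi^{\overline{U}}(x,0,t^i_a+\sigma)\big)=\mu\big(\Phi^{\overline{u}^{\star}}(x,0,\sigma)+S_i\big)=\mu_i\big(\Phi^{\overline{u}^{\star}}(x,0,\sigma)\big),
\]
and, wherever this is nonzero, $\Phi^{\overline{U}}(x,0,t^i_a+\sigma)-S_i=\Phi^{\overline{u}^{\star}}(x,0,\sigma)$. Indeed \eqref{equation:clpropU} gives $\Phi^{\overline{U}}(x,0,t^{i-1}_c)=x$; on $[t^{i-1}_c,t^i_a]$ one has $\overline{U}=\overline{y}$, which by \eqref{equation:propybar3} and \eqref{equation:Si} carries an $L$‑neighborhood of $\mathcal{O}_i$ onto $\mathcal{O}$ by the rigid translation $x\mapsto x+S_i$; on $[t^i_a,t^i_b]$, $\overline{U}$ coincides on the set $\{\chi\equiv1\}$—which by \Cref{lemma:propchimu1B} contains $\operatorname{supp}\mu$ together with its images under the relevant flows—with $\overline{u}^{\star}(\cdot-S_i,\cdot-t^i_a)$, whose flow equals $\Phi^{\overline{u}^{\star}}(\cdot,0,\cdot)+S_i$ by \Cref{lemma:f2}; and the passage from ``on $\{\chi\equiv1\}$'' to ``for all $x$'' is precisely \Cref{lemma:f1} used in both triggering forms plus a short contradiction argument (if neither trigger applies, both sides vanish identically). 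The $M$ windows do not interfere because $\Phi^{\overline{U}}$ returns to the identity at each $t^{\ell}_c$ by \eqref{equation:clpropU}. Splitting $\int_0^1 G[g^{\star}]\big(\Phi^{\overline{U}}(\cdot,0,s),s\big)\,ds$ over the windows and substituting $\sigma=s-t^i_a$, the leading term yields $\sum_i\int_0^{T^{\star}}\mu\big(\Phi^{\overline{u}^{\star}}(\cdot,0,\sigma)+S_i\big)\,g^{\star}\big(\Phi^{\overline{u}^{\star}}(\cdot,0,\sigma),\sigma\big)\,d\sigma$, which collapses to $\mathcal{R}_{\star}(g^{\star})$ by the partition of unity $\sum_i\mu(\cdot+S_i)=\sum_i\mu_i=1$; the average‑correction term and the $(\overline{U}\cdot\nabla)\mu$ term cancel each other along characteristics via the pointwise identity $\big[(\overline{U}(\cdot,t)\cdot\nabla)\mu\big]\big(\Phi^{\overline{U}}(x,0,t)\big)=\tfrac{d}{dt}\,\mu\big(\Phi^{\overline{U}}(x,0,t)\big)$ and an integration by parts in $\sigma$ (using $a_i(0)=0$), leaving only the boundary residual $\sum_i a_i(T^{\star})\mu_i$, which depends on $g^{\star}$ only through $\int_0^{T^{\star}}g^{\star}(\cdot,\sigma)\,d\sigma$. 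Hence $V(\cdot,1)-v_0=\mathcal{R}_{\star}(g^{\star})+\sum_i a_i(T^{\star})\mu_i$.

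\textbf{Conclusion and main obstacle.} If $g^{\star}$ is chosen with $\int_0^{T^{\star}}g^{\star}(\cdot,\sigma)\,d\sigma=0$, then $a_i(T^{\star})=0$, the residual vanishes, and $V(\cdot,1)-v_0=\mathcal{R}_{\star}(g^{\star})$ exactly, whence the theorem by the reduction step. The point I expect to be the hardest is exactly this zero‑average bookkeeping: one needs to know that $\mathcal{R}_{\star}$ still has dense range when restricted to the codimension‑$4$ subspace $\{g^{\star}\in L^2((0,T^{\star});\mathscr{H}):\int_0^{T^{\star}}g^{\star}(\cdot,\sigma)\,d\sigma=0\}$ (equivalently, that the finite‑rank residual $\sum_i a_i(T^{\star})\mu_i$ can be absorbed into the target). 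Since $\lambda\mapsto\lambda\circ\Phi^{\overline{u}^{\star}}(\cdot,\sigma,0)$ solves the homogeneous transport equation along $\overline{u}^{\star}$, the dual version of this statement is that any $\lambda\in H^m$ whose $\mathscr{H}$‑projection is $\sigma$‑independent must vanish—a saturation‑type assertion that the observable‑family structure of $\overline{u}^{\star}$ underlying \Cref{lemma:lineareverywheresupported}/\Cref{lemma:lineareverywheresupported_cpam} should be robust enough to deliver; alternatively one invokes the continuous right‑inverse form of the control in \Cref{remark:bop} together with a Fredholm perturbation to swallow the residual.
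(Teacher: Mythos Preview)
Your geometric core---reducing to the method of characteristics, invoking \eqref{equation:clpropU}, \Cref{lemma:f1}, \Cref{lemma:f2}, and the partition of unity $\sum_i\mu_i=1$ to obtain $\int_0^1\widetilde{G}\big(\Phi^{\overline{U}}(\cdot,0,s),s\big)\,ds=\mathcal{R}_{\star}(g^{\star})$---is correct and matches the paper's Step~1 essentially line for line. The divergence is entirely in the zero-average bookkeeping (Step~2), and there you have manufactured a genuine obstacle that the paper simply sidesteps.

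The paper does \emph{not} build a zero-average control window by window. Instead it first solves the transport problem with the non-zero-average force $\widetilde{G}$, obtaining $\widetilde{V}$ with $\widetilde{V}(\cdot,1)=v(\cdot,T^{\star})$ exactly. The crucial observation you missed is that $v(\cdot,T^{\star})\in H^m$ already has zero average, so the corrected state $V\coloneq\widetilde{V}-\mu\!\int\!\widetilde{V}/\!\int\!\mu$ satisfies $V(\cdot,1)=\widetilde{V}(\cdot,1)$ automatically---no residual. One then just reads off the control $G$ from $\partial_tV+(\overline{U}\cdot\nabla)V$ and checks it lies in a finite-dimensional space. This $G$ is nonzero also on the inter-window intervals $[t^i_b,t^{i+1}_a]$ (where it equals $-\tfrac{\int\widetilde{V}}{\int\mu}(\overline{y}\cdot\nabla)\mu$), which is why the paper's $\mathscr{F}_{\mathscr{t}}$ contains the profiles $(U_j\cdot\nabla)\mu$ for the $\overline{y}$-modes as well; your space omits these because you set $G=0$ outside the windows.

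Your residual $\sum_ia_i(T^{\star})\mu_i$ is computed correctly, and your proposed fix (restrict to $g^{\star}$ with $\int_0^{T^{\star}}g^{\star}(\cdot,\sigma)\,d\sigma=0$) would indeed kill it. But the claim that $\mathcal{R}_{\star}$ still has dense range on this codimension-$4$ subspace is not a formality: neither the duality sketch nor the ``Fredholm perturbation'' remark is a proof, since a dense-range operator plus a finite-rank perturbation need not have dense range. It may well be true via the observable-family machinery, but you would have to prove it, whereas the paper's route needs nothing of the sort. In short: your Step~1 is right; replace your Step~2 by the paper's correction at the level of $\widetilde{V}$ (enlarging $\mathscr{F}_{\mathscr{t}}$ by the $(\overline{y}\cdot\nabla)\mu$ directions) and the gap closes immediately.
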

	\begin{proof}
		The first step is to define a finite-dimensional and physically localized auxiliary control, which however fails to be of zero average. In the second step, the average is corrected. The last step will summarize how the universal space~$\mathscr{F}_{\mathscr{t}}$ arises from the foregoing constructions.
		\paragraph*{Step 1. Auxiliary control.}
		Let the functions $v \in C^0([0,T^{\star}];H^m(\mathbb{T}^2;\mathbb{R}))$ and $g^{\star}\in L^2((0,T^{\star}); \mathscr{H})$ be obtained by \Cref{lemma:lineareverywheresupported} such that~$v$ solves the transport equation \eqref{equation:laes1} with control~$g^{\star}$ and satisfies 
		\begin{equation}\label{equation:pfcgv}
			\|v(\cdot, T^{\star})-v_1\|_{m} < \varepsilon.
		\end{equation}
		Then, we define for all $(x,t) \in \mathbb{T}^2 \times [0,1]$ the auxiliary control
		\begin{equation}\label{equation:Gtilde}
			\widetilde{G}(x,t) \coloneq \mu(x) \sum_{i=1}^M  \mathbb{I}_{[t^i_a, t^i_b]}(t) g^{\star}(x-S_i, t-t^i_a), 
		\end{equation}
		where $\mu$ with $\operatorname{supp}(\mu) \subset \mathcal{O} \subset \omegaup$ is the cutoff from \Cref{subsection:convection} and $S_1, \dots, S_M$ satisfy \eqref{equation:Si}. 
		
		Associated with the control~$\widetilde{G}$, let $\widetilde{V}$ be the solution to $\partial_t \widetilde{V} + (\overline{U} \cdot \nabla) \widetilde{V} = \widetilde{G}$ with initial condition $\widetilde{V}(\cdot, 0) = v_0$.
		By the method of characteristics and Duhamel's principle, it holds
		\[
		\widetilde{V}(x, t) = v_0(\Phi^{\overline{U}}(x,t,0)) + \int_0^t \widetilde{G}(\Phi^{\overline{U}}(x,t,s),s) \, ds
		\]
		for all $(x,t) \in \mathbb{T}^2 \times [0,1]$. Now, let $x \in \mathbb{T}^2$ be arbitrary and note that $\Phi^{\overline{U}}(x,1,0) = x$ holds due to \eqref{equation:clpropU}. Therefore, 
		\begin{equation*}
			\begin{aligned}
				\widetilde{V}(x, 1) & = v_0(x) + \int_0^1 \widetilde{G}(\Phi^{\overline{U}}(x,1,s),s) \, ds \\
				& = v_0(x) + \sum_{i=1}^M \int_0^1 \mu(\Phi^{\overline{U}}(x,1,s)) \mathbb{I}_{[t^i_a, t^i_b]}(s) g^{\star}(\Phi^{\overline{U}}(x,1,s)-S_i,s-t^i_a) \, ds.
			\end{aligned}
		\end{equation*}
		Again by \eqref{equation:clpropU}, it follows that
		\begin{align*}
			\widetilde{V}(x, 1) & = v_0(x) + \sum_{i=1}^M \! \int_0^1 \!\! \! \mu(\Phi^{\overline{U}}(x,0,s)) \mathbb{I}_{[t^i_a, t^i_b]}(s) g^{\star}(\Phi^{\overline{U}}(x,0,s)-S_i,s-t^i_a) \, ds \\
			& = v_0(x) + \sum_{i=1}^M \int_{t^i_a}^{t^i_b} \mu(\Phi^{\overline{U}}(x,0,s)) g^{\star}(\Phi^{\overline{U}}(x,0,s)-S_i,s-t^i_a) \, ds,
		\end{align*}
		which is due to a change of variables under the integral sign equal to
		\begin{equation*}
			v_0(x) + \sum_{i=1}^M \int_{0}^{T^{\star}} \mu(\Phi^{\overline{U}}(x,0,s+t^i_a)) g^{\star}(\Phi^{\overline{U}}(x,0,s+t^i_a)-S_i,s) \, ds.
		\end{equation*}
		In view of \Cref{lemma:f1}, the previous expression equals
		\begin{equation*}
			v_0(x) + \sum_{i=1}^M \int_{0}^{T^{\star}} \!\! \mu(\Phi^{\overline{u}^{\star}(\cdot-S_i,\cdot)}(x+S_i,0,s)) g^{\star}(\Phi^{\overline{u}^{\star}(\cdot-S_i,\cdot)}(x+S_i,0,s)-S_i,s) \, ds,
		\end{equation*}
		and therefore \Cref{lemma:f2} allows to infer that
		\begin{equation*}
			\widetilde{V}(x, 1) = v_0(x) + \sum_{i=1}^M \int_{0}^{T^{\star}} \mu(\Phi^{\overline{u}^{\star}}(x,0,s)+S_i) g^{\star}(\Phi^{\overline{u}^{\star}}(x,0,s)+S_i-S_i,s) \, ds.
		\end{equation*}
		Recalling that $(\mu_i)_{i\in\{1,\dots,M\}}$ is the partition of unity from \Cref{subsection:convection} and that~$\overline{u}^{\star}$ satisfies \eqref{equation:rpus}, one obtains
		\begin{align*}
			\widetilde{V}(x, 1) & = v_0(x) + \sum_{i=1}^M \int_{0}^{T^{\star}} \mu_i(\Phi^{\overline{u}^{\star}}(x,0,s)) g^{\star}(\Phi^{\overline{u}^{\star}}(x,0,s),s) \, ds \\
			& = v_0(\Phi^{\overline{u}^{\star}}(x,T^{\star},0)) + \int_{0}^{T^{\star}} g^{\star}(\Phi^{\overline{u}^{\star}}(x,T^{\star},s),s) \, ds.
		\end{align*}
		This demonstrates that $\widetilde{V}(x, 1) = v(x, T^{\star})$, and because~$v$ satisfies~\eqref{equation:pfcgv}, one arrives at
		\[
		\|\widetilde{V}(\cdot, 1) - v_1\|_m = \|v(\cdot, T^{\star}) - v_1\|_m < \varepsilon.
		\]
		\paragraph*{Step 2. Control with zero average.}
		It remains to define suitable modifications of $\widetilde{V}$ and $\widetilde{G}$ with zero average. Since $\widetilde{V}(\cdot, 1) = v(\cdot, T^{\star})$ has zero average ($v$ is the function from \Cref{lemma:lineareverywheresupported}), we introduce
		\[
			V(x,t) \coloneq \widetilde{V}(x,t) - \frac{\mu(x)\int_{\mathbb{T}^2} \widetilde{V}(x,t) \, dx}{\int_{\mathbb{T}^2} \mu(x) \, dx},
		\]
		which by construction satisfies together with the modified control
		\begin{equation}\label{equation:G}
			G(x,t) \coloneq \widetilde{G}(x,t) - \frac{\frac{d}{dt}\int_{\mathbb{T}^2} \widetilde{V}(z,t) \, dz}{\int_{\mathbb{T}^2} \mu(z) \, dz} \mu(x) - \frac{\int_{\mathbb{T}^2} \widetilde{V}(z,t) \, dz}{\int_{\mathbb{T}^2} \mu(z) \, dz} (\overline{U}(x,t) \cdot \nabla)\mu(x)
		\end{equation}
		the transport equation \eqref{equation:thmp1} and the condition \eqref{equation:thmp2}.
		
		\paragraph*{{Step 3. The space $\mathscr{F}_{\mathscr{t}}$.}}
		By \Cref{definition:ovf}, \Cref{lemma:retf}, \cref{equation:choiceK,equation:defustar,equation:Ubar,equation:strfus}, there exist smooth universal functions $U_1, \dots, U_{D_{\omegaup} + 4} \in C^{\infty}(\mathbb{T}^2; \mathbb{R}^2)$ such that
		\begin{equation}\label{equation:repUbarfd}
			\overline{U}(x,t) = \sum_{i=1}^{D_{\omegaup} + 4} u_i(t) U_i(x)
		\end{equation}
		for coefficients $u_1, \dots, u_{D_{\omegaup} + 4} \in L^2((0,1); \mathbb{R})$. More specifically, one has $\overline{y} \in C^{\infty}_0((0,1); \mathscr{H}_{\omegaup})$ for an at most $D_{\omegaup}$-dimensional space $\mathscr{H}_{\omegaup} \subset C^{\infty}(\mathbb{T}^2; \mathbb{R}^2)$; hence, we can choose $U_1, \dots, U_{D_{\omegaup}} \in C^{\infty}(\mathbb{T}^2; \mathbb{R}^2)$ so that $\overline{y}(x,t) = \sum_{i=1}^{D_{\omegaup}} u_i(t) U_i(x)$.
		Further, by \eqref{equation:strfus} and trigonometric identities, the stream function~$\overline{\phi}^{\star}$ appearing in~\eqref{equation:Ubar} has the four-dimensional representation
		\[
			\overline{\phi}^{\star}(x,t) = \phi_1(t) \sin(x_1) + \phi_2(t) \sin(x_2) + \phi_3(t) \cos(x_1) + \phi_4(t) \cos(x_2),
		\]
		with $\phi_1, \dots, \phi_4 \in L^2((0,1); \mathbb{R})$. Therefore, one can choose
		\begin{gather*}
			U_{D_{\omegaup} + 1}(x) = \nabla^{\perp}[\chi(x)\sin(x_1)], \quad U_{D_{\omegaup} + 2}(x) = \nabla^{\perp}[\chi(x)\sin(x_2)],\\
			U_{D_{\omegaup} + 3}(x) = \nabla^{\perp}[\chi(x)\cos(x_1)], \quad U_{D_{\omegaup} + 4}(x) = \nabla^{\perp}[\chi(x)\cos(x_2)].
		\end{gather*}
		
		In a similar manner, from \eqref{equation:choiceK} it follows that there are universal functions $\widetilde{G}_1, \dots, \widetilde{G}_{4} \in C^{\infty}(\mathbb{T}^2; \mathbb{R}^2)$ with $\operatorname{supp}(\widetilde{G}_i) \subset \omegaup$ for $i \in \{1,\dots,4\}$ such that~$\widetilde{G}$ from \eqref{equation:Gtilde} is with coefficients $\widetilde{g}_1, \dots, \widetilde{g}_{4} \in L^2((0,1); \mathbb{R})$ of the form $\widetilde{G}(x,t) = \sum_{i=1}^{4} \widetilde{g}_i(t) \widetilde{G}_i(x)$.
		In order to replace $\widetilde{G}_1, \dots, \widetilde{G}_{4}$ with zero average versions, we first expand $\widetilde{G}$ in the following way
		\[
		\widetilde{G}(x,t) = \sum_{i=1}^{4} \widetilde{g}_i(t) \widehat{G}_i(x) + \sum_{i=1}^{4} \frac{\widetilde{g}_i(t) \int_{\mathbb{T}^2} \widetilde{G}_i(z) \, dz}{\int_{\mathbb{T}^2} \mu(z) \, dz} \mu(x),
		\]
		where
		\[
		\widehat{G}_i(x) \coloneq \widetilde{G}_i(x) - \frac{\int_{\mathbb{T}^2} \widetilde{G}_i(z) \, dz}{\int_{\mathbb{T}^2} \mu(z) \, dz} \mu(x)
		\]
		for $i \in \{1, \dots, 4\}$. Since the right-hand side in \eqref{equation:G} and its last term both have zero average, it follows that
		\[
		\sum_{i=1}^{4} \frac{\widetilde{g}_i(t) \int_{\mathbb{T}^2} \widetilde{G}_i(z) \, dz}{\int_{\mathbb{T}^2} \mu(z) \, dz} \mu(x) = \frac{\frac{d}{dt}\int_{\mathbb{T}^2} \widetilde{V}(z,t) \, dz}{\int_{\mathbb{T}^2} \mu(z) \, dz} \mu(x).
		\]
		Plugging this into \eqref{equation:G}, we obtain profiles $G_1, \dots, G_{D_{\omegaup} + 8} \in C^{\infty}(\mathbb{T}^2; \mathbb{R}^2)$ that have zero average and satisfy $\operatorname{supp}(G_i) \subset \omegaup$ for all $i \in \{1,\dots,D_{\omegaup} + 8\}$ such that $G(x,t) = \sum_{i=1}^{D_{\omegaup} + 8} g_i(t) G_i(x)$
		holds with coefficients $g_1, \dots, g_{D_{\omegaup} + 8} \in L^2((0,1); \mathbb{R})$. Thus, the space $\mathscr{F}_{\mathscr{t}}$ can be taken as the span of $G_1, \dots, G_{D_{\omegaup} + 8}$.
	\end{proof}
	
	\begin{rmrk}\label{remark:tempav}
		The result of \Cref{theorem:locfinthm} remains true for states with nonzero average, as long as one adds to $\mathscr{F}_{\mathscr{t}}$ the one-dimensional space spanned by the smooth cutoff $\mu$ from \Cref{subsection:convection} (or by any other smooth nonzero average function supported in $\omegaup$). Indeed, if $\tau_0$ denotes the average of~$v_0$ and~$\tau_1$ is the average of~$v_1$, one can take $\tau \in C^{\infty}_0((0,1);\mathbb{R})$ such that $\smallint_0^1 \tau(s) \, ds = 1$ and observe that the solution to
		\[
		\partial_t\widetilde{v} + (\overline{U}\cdot\nabla)\widetilde{v} = \zeta \coloneq \frac{\tau (\tau_1 - \tau_0)}{\int_{\mathbb{T}^2} \mu(x) \, dx}\mu
		\]
		with initial condition $\widetilde{v}(\cdot, 0) = \tau_0$ satisfies $\smallint_{\mathbb{T}^2} \widetilde{v}(x, 1) \, dx = \tau_1$. Subsequently, one obtains a zero average control $\widehat{G}$ by applying \Cref{theorem:locfinthm} with initial state $v_0 - \tau_0$ and target state $v_1 - \widetilde{v}(\cdot, 1)$. The desired control for the nonzero average trajectory is then $G = \widehat{G} + \zeta$. Notably, while this is an approximate controllability result, the average is controlled exactly.
	\end{rmrk}
	
	\begin{rmrk}\label{remark:bop2}
		Let $\varepsilon > 0$ and $B$ a bounded set in $H^{m+1}$ with $v_0, v_1 \in B$. By \Cref{remark:bop} and the constructions in \cref{equation:Gtilde,equation:G}, the controls from \Cref{theorem:locfinthm} and \Cref{remark:tempav} for the modified target condition
		\begin{equation*}
			\|V(\cdot, 1) - v_1\|_m < \varepsilon \|v_0-v_1\|_{m+1}
		\end{equation*}
		can be chosen of the form $G = \mathcal{L}_{\varepsilon}(v_1-v_0)$, where $\mathcal{L}_{\varepsilon}$ is a continuous linear operator $H^{m}\longrightarrow L^2((0,1); \mathscr{F}_{\mathscr{t}})$. 
	\end{rmrk}

	The following auxiliary lemma, which will be used in \Cref{section:conclusion}, emphasizes the $1$-periodicity of homogeneous transportation with stretching effect along the vector field~$\overline{U}$ from \eqref{equation:Ubar}.
	\begin{lmm}\label{lemmea:timeperiodicht}
		For each $V_0 \in H^1$, the solution $V$ to the homogeneous linear convection problem
		\begin{gather*}
			\partial_t V + (\overline{U} \cdot \nabla) V + (\Upsilon(V) \cdot \nabla) \nabla\wedge\overline{U} = 0
		\end{gather*}
		with $V(\cdot, 0) = V_0$ satisfies $V(\cdot, 1) = V_0$.
	\end{lmm}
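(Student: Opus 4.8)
The plan is to exploit a time-reversal symmetry of $\overline{U}$ on each of the $M$ blocks $[t^{i-1}_c, t^i_c]$ of length $3T^{\star}$, combined with forward uniqueness for the linear evolution equation, to show that the solution returns at time $t^i_c$ to its value at $t^{i-1}_c$. Since $\overline{U}$ vanishes on the two remaining intervals $[0, t^0_c]$ and $[t^M_c, 1]$ (by \eqref{equation:Ubar} and \eqref{equation:propybar1}, recalling $t^0_c = T^{\star}$ and $t^M_c = 1-T^{\star}$), the equation there reduces to $\partial_t V = 0$, so $V$ is constant on each; hence it suffices to prove $V(\cdot, t^i_c) = V(\cdot, t^{i-1}_c)$ for every $i \in \{1, \dots, M\}$, after which chaining the equalities gives $V(\cdot, 1) = V(\cdot, t^M_c) = \dots = V(\cdot, t^0_c) = V(\cdot, 0) = V_0$.

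The first step is to record the symmetry. Writing $c_i \coloneq (t^{i-1}_c + t^i_c)/2$ for the midpoint of block $i$, which is also the midpoint of $[t^i_a, t^i_b]$, I claim that $\overline{U}(\cdot, 2c_i - t) = -\overline{U}(\cdot, t)$ for all $t \in [t^{i-1}_c, t^i_c]$. On $[t^{i-1}_c, t^i_a]$ and on $[t^i_b, t^i_c]$ the indicator in \eqref{equation:Ubar} vanishes, so $\overline{U} = \overline{y}$ there, and the asserted identity is precisely \eqref{equation:propybar2} (reflection about $c_i$ interchanges these two sub-intervals). On the middle sub-interval $[t^i_a, t^i_b]$ one has $\overline{y} = 0$ by \eqref{equation:propybar1}, so $\overline{U}(\cdot, t) = \nabla^{\perp}[\chi\,\overline{\phi}^{\star}(\cdot - S_i, t - t^i_a)]$; here I use that \eqref{equation:defustar} yields $\overline{u}^{\star}(\cdot, s) = -\overline{u}^{\star}(\cdot, T^{\star} - s)$ for $s \in [0, T^{\star}]$, hence — applying the time-independent linear map sending a divergence-free zero-average field to its zero-average stream function, and invoking \eqref{equation:strfus} — also $\overline{\phi}^{\star}(\cdot, s) = -\overline{\phi}^{\star}(\cdot, T^{\star} - s)$; since $2c_i - t - t^i_a = T^{\star} - (t - t^i_a)$, this gives the claim on $[t^i_a, t^i_b]$. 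In particular $\nabla\wedge\overline{U}(\cdot, 2c_i - t) = -\nabla\wedge\overline{U}(\cdot, t)$ as well.

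Next I set $W(x, t) \coloneq V(x, 2c_i - t)$ for $t \in [c_i, t^i_c]$ and verify that $W$ solves the very same equation on $[c_i, t^i_c]$. Indeed $\partial_t W(x, t) = -(\partial_t V)(x, 2c_i - t)$, and substituting the equation satisfied by $V$ at time $2c_i - t$, then the sign changes of $\overline{U}$ and $\nabla\wedge\overline{U}$ under reflection about $c_i$, and noting that $\Upsilon(V(\cdot, 2c_i - t)) = \Upsilon(W(\cdot, t))$ because $\Upsilon$ is linear and acts at fixed time, the two sign changes cancel the sign coming from $\partial_t$, so that $\partial_t W + (\overline{U}\cdot\nabla)W + (\Upsilon(W)\cdot\nabla)\nabla\wedge\overline{U} = 0$ on $[c_i, t^i_c]$. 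Since $W(\cdot, c_i) = V(\cdot, c_i)$, uniqueness for the (well-posed, linear, spatially smooth-coefficient) forward initial value problem forces $W = V$ on $[c_i, t^i_c]$; evaluating at $t = t^i_c$ gives $V(\cdot, t^i_c) = W(\cdot, t^i_c) = V(\cdot, 2c_i - t^i_c) = V(\cdot, t^{i-1}_c)$, which is the block identity. The chaining described in the first paragraph then completes the proof.

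I expect the only mildly delicate point to be the bookkeeping in the last two paragraphs: assembling the block-midpoint anti-symmetry of $\overline{U}$ from the constituent symmetries of $\overline{y}$ (via \Cref{lemma:retf}) and of $\overline{u}^{\star}$ (via \eqref{equation:defustar}), and then checking that the reflected field $W$ solves literally the same transport equation with stretching — which works precisely because both $\overline{U}$ and its curl flip sign under the reflection while $\Upsilon$ does not interfere. No hard analysis is involved; the well-posedness and uniqueness used are standard for this linear transport problem with a spatially smooth, $L^2$-in-time lower-order term.
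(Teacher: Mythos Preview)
Your proof is correct and follows essentially the same approach as the paper: both reduce to the individual blocks $[t^{i-1}_c,t^i_c]$, establish the midpoint anti-symmetry $\overline{U}(\cdot,2c_i-t)=-\overline{U}(\cdot,t)$ from the constituent symmetries of $\overline{y}$ and $\overline{u}^{\star}$, and then use forward uniqueness to identify the reflected solution with the original. Your write-up is somewhat more explicit than the paper's in tracking the anti-symmetry on the three sub-intervals and in handling the end intervals $[0,t^0_c]$, $[t^M_c,1]$, but there is no substantive difference.
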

	\begin{proof}
		Due to the definition of $\overline{U}$ in \eqref{equation:Ubar}, the problem reduces to showing that the solution $V^l$ on $\mathbb{T}^2\times[t^l_c,t^{l+1}_c]$ to 
		\begin{gather*}
			\partial_t V^l + (\overline{U} \cdot \nabla) V^l + (\Upsilon(V^l) \cdot \nabla) \nabla\wedge\overline{U} = 0
		\end{gather*}
		with initial state $V^l(\cdot, t^l_c) = V_0$ satisfies $V(t^{l+1}_c) = V_0$ for each $l \in \{0,\dots,M-1\}$ and $V_0 \in H^1$. This follows from a time reversibility argument and the properties \cref{equation:propybar2,equation:defustar,equation:rpus,equation:rpus3,equation:clpropU}. Indeed, 
		\[
		\overline{U}(\cdot, t^{l}_c + 3T^{\star}/2 +t) = -\overline{U}(\cdot, t^{l}_c + 3T^{\star}/2 - t)
		\]
		holds for all $t \in [0, 3T^{\star}/2]$. Therefore, each $\widehat{V}^l(\cdot, t) \coloneq V^l(\cdot, t^{l}_c + 3T^{\star}/2 - t)$ satisfies in $\mathbb{T}^2 \times [0, 3T^{\star}/2]$ the equation
		\begin{gather*}
			\partial_t \widehat{V}^l + (\overline{U}(\cdot, t^{l}_c + 3T^{\star}/2 + \cdot) \cdot \nabla) \widehat{V}^l + (\Upsilon(\widehat{V}^l) \cdot \nabla) \nabla\wedge\overline{U}(\cdot, t^{l}_c + 3T^{\star}/2 + \cdot) = 0
		\end{gather*}
		with initial condition $\widehat{V}^l(\cdot, 0) = V^l(\cdot, t^{l}_c + 3T^{\star}/2)$.
		Due to $\widehat{V}^l(\cdot, 3T^{\star}/2) = V^l(\cdot, t^l_c) = V_0$, noting that $\widetilde{V}^l(\cdot, t) \coloneq V^l(\cdot, t^l_c+3T^{\star}/2+t)$ and $\widehat{V}^l$ both solve on $[0, 3T^{\star}/2]$ the same well-posed problem, the claim follows.
	\end{proof}

	\section{The nonlinear problem and \texorpdfstring{conclusion of \Cref{theorem:main}}{conclusion of the main result}}\label{section:conclusion}
	Like the $2$D incompressible Navier--Stokes equations (see \cite{FoiasManleyTemam1987,Temam1997, Temam2001}), the planar Boussinesq system \eqref{equation:Boussinesq} with nonzero viscosity and thermal diffusivity is well-posed in common Sobolev space settings. We will work with the vorticity-temperature formulation obtained from \eqref{equation:Boussinesq} by formally applying the operator $\nabla \wedge$ in the velocity equation. 
	
	Given $T > 0$ and $m \in \mathbb{N}$, we define the spaces $\mathcal{X}_{T}^m \coloneq \mathcal{A}_{T}^{m-1} \times \mathcal{A}_{T}^m$ with the norm $\|(w,\theta)\|_{\mathcal{X}_{T}^m} \coloneq \|w\|_{\mathcal{A}_{T}^{m-1}}  + \|\theta\|_{\mathcal{A}_{T}^m}$, where
	\[
	\mathcal{A}_{T}^k \coloneq C^0([0,T];H^k(\mathbb{T}^2; \mathbb{R})) \cap L^2((0,T);H^{k+1}(\mathbb{T}^2; \mathbb{R}))
	\]
	is for $k \in \mathbb{N}_0$ equipped with
	\[
	\|\cdot\|_{\mathcal{A}_{T}^k} \coloneq \|\cdot\|_{C^0([0,T];H^{k}(\mathbb{T}^2; \mathbb{R}))} + \|\cdot\|_{L^2((0,T);H^{k+1}(\mathbb{T}^2; \mathbb{R}))}.
	\]
	Then, for any $(w_0, \theta_0) \in H^{m-1}\times H^{m}$,~$(h_1, h_2) \in L^2((0,T);H^{m-2}\times H^{m-1})$ and $A \in W^{1,2}((0,T); \mathbb{R}^2)$, there exists a unique solution~$(w, \theta) \in \mathcal{X}_{T}^m$
	to the Boussinesq system in vorticity-temperature form 
	\begin{equation}\label{equation:vfb}
		\begin{gathered}
			\partial_t w - \nu \Delta w + \left(u \cdot \nabla \right) w = \partial_1 \theta + h_1, \\
			u(\cdot, t) = \Upsilon(w, A), \\
			\partial_t \theta - \tau\Delta \theta + \left(u \cdot \nabla \right) \theta = h_2, \\
			w(\cdot, 0) = w_0, \quad \theta(\cdot, 0) = \theta_0.
		\end{gathered}
	\end{equation}
	Moreover, the resolving operator for \eqref{equation:vfb}, associating with $(w_0,\theta_0, h_1, h_2, A)$ the solution $(w, \theta)$ to \eqref{equation:vfb}, is continuous and given by
	\begin{gather*}
		S_{T} \colon H^{m-1}\times H^{m} \times L^2((0,T);H^{m-2}\times H^{m-1}) \times W^{1,2}((0,T); \mathbb{R}^2) \longrightarrow \mathcal{X}_{T}^m, \\
		(w_0,\theta_0, h_1, h_2, A) \longmapsto {S}_{T}(w_0,\theta_0, h_1, h_2, A) \coloneq (w, \theta).
	\end{gather*}

	To begin with the proof of \Cref{theorem:main}, let us emphasize that the profile $\overline{U}$ from \eqref{equation:Ubar} satisfies due to \Cref{lemma:retf} the following controllability problem for the incompressible Euler system in $\mathbb{T}^2 \times (0,1)$:
	\begin{equation}\label{equation:ovp}
		\begin{gathered}
			\partial_t \overline{U} + (\overline{U} \cdot \nabla)\overline{U} + \nabla \overline{P} = \mathbb{I}_{\omegaup}\overline{H}, \\ \operatorname{div}(\overline{U}) = 0,\\
			\overline{U}(\cdot, 0) = \overline{U}(\cdot, 1) = 0,
		\end{gathered}
	\end{equation}
	where
	\begin{gather*}
		\overline{P} \in L^2((0,1); C^{\infty}(\mathbb{T}^2; \mathbb{R}^2)), \quad
		\overline{H} \in L^2((0,1); C^{\infty}(\mathbb{T}^2; \mathbb{R}^2)),\\
		\operatorname{supp}(\overline{H}) \subset \omegaup \times (0,1).
	\end{gather*}
	Moreover, since $\overline{U}(\cdot, t)$ is a gradient in a neighborhood of $\mathbb{T}^2\setminus \omegaup$ at each time $t \in [0,1]$, the functions
	\[
		H^{1,1} \coloneq \nabla \wedge \overline{H}, \quad H^{1,2} \coloneq -\Delta (\nabla \wedge \overline{U})
	\]
	satisfy
	\[
		\operatorname{supp}(H^{1,1}) \cup \operatorname{supp}(H^{1,2}) \subset \omegaup\times[0, 1].
	\]
	Further, we define
	\begin{equation}\label{equation:defH12}
		\begin{gathered}
			\overline{U}_{\delta}(\cdot, t) \coloneq \delta^{-1} \overline{U}(\cdot, \delta^{-1}t), \\
			A_{\delta}(t) \coloneq \delta^{-1} \int_0^{\delta^{-1}t}\int_{\mathbb{T}^2} \overline{H}(x, s) \, dx ds,\\
			H_{1, \delta}(\cdot, t) \coloneq \delta^{-2}H^{1,1}(\cdot, \delta^{-1}t) + \delta^{-1}\nu H^{1,2}(\cdot, \delta^{-1}t)
		\end{gathered}
	\end{equation}
	for $\delta >0$ and all $t \in [0,\delta]$. As $\overline{U}$ is by construction supported in~$(0,1)$ with respect to the time variable, it follows from \eqref{equation:ovp} that
	\[
		A_{\delta}(\delta) = \delta^{-1} \int_0^{1}\int_{\mathbb{T}^2} \overline{H}(x, s) \, dx ds = 0.
	\]
	
	The next theorem relates the temperature in \eqref{equation:vfb} at a small time $t = \delta$ with the solution to \eqref{equation:thmp1} evaluated at~$t=1$.  The proof is similar to arguments given in \cite{NersesyanRissel2024,NersesyanRissel2024a}; the general approach is due to \cite{Coron96}, and we refer also to \cite{FursikovImanuvilov1999,Nersesyan2021}.  As the present situation still differs from that in the aforementioned references, at least with respect to the choice of the return method trajectory and due to the average correction terms allowed here, we recall and adapt the argument with details. 
	\begin{thrm}\label{theorem:ctl}
		Assume that $m \geq 2$, $(w_0, \theta_0) \in H^{m} \times H^{m+1}(\mathbb{T}^2;\mathbb{R})$, $A \in \mathbb{R}^2$, $b \in C^{\infty}(\mathbb{T}^2\times[0,1];\mathbb{R}^2)$, and $(h_1, h_2) \in L^2((0,T); H^{m-2} \times H^{m-1})$ for some $T > 0$. Moreover, denote by $(v_{\delta}, \vartheta_{\delta})_{\delta \in (0,\min\{1,T\})}$ the solutions to the linear problems with parameter $\delta \in (0,\min\{1,T\})$:
		\begin{equation}\label{equation:isll}
			\begin{gathered}
				\partial_t v_{\delta} + (\overline{U} \cdot \nabla) v_{\delta} + (\Upsilon(v_{\delta}, B_{\delta}) \cdot \nabla) \nabla \wedge \overline{U} = \partial_1 \vartheta_{\delta} + \nabla \wedge b, \\
				\partial_t \vartheta_{\delta} + (\overline{U} \cdot \nabla) \vartheta_{\delta} = \zeta_{\delta}, \\
				\quad v_{\delta}(\cdot, 0) = w_0, \quad \vartheta_{\delta}(\cdot, 0) = \delta\theta_0,
			\end{gathered}
		\end{equation}
		where 
		\[
		B_{\delta}(t) \coloneq A +  \int_0^{t} \int_{\mathbb{T}^2} b(x,s) \, dx ds + e_2 \int_0^{t} \int_{\mathbb{T}^2} \delta^{-1} \vartheta_{\delta}(x,s) \, dx ds
		\]
		for $t \in [0,1]$ and the family
		$(\zeta_{\delta})_{\delta \in (0,\min\{1,T\})} \subset L^2((0,1); C^{\infty}(\mathbb{T}^2; \mathbb{R}))$ of forces is chosen such that
		\begin{equation}\label{equation:od}
			\sup_{t \in [0, 1]} \|\vartheta_{\delta}(\cdot, t)\|_{m+1} = \mathscr{O}(\delta) \mbox{ as } \delta \longrightarrow 0.
		\end{equation}
		Further, for $t \in [0, \delta]$, denote
		\begin{gather*}
			H_{2,\delta}(\cdot, t) \coloneq \delta^{-2}\zeta_{\delta}(\cdot, \delta^{-1}t), \quad
			b_{\delta}(\cdot, t) \coloneq \delta^{-1} b(\cdot, \delta^{-1}t),\\
			\aleph_{\delta}(t) \coloneq A_{\delta}(t) + B_{\delta}(\delta^{-1}t).
		\end{gather*}
		Then, as $\delta \longrightarrow 0$, one has in $H^{m-1}\times H^m(\mathbb{T}^2;\mathbb{R})$ the convergence
		\begin{equation*}\label{equation:limitl}
			S_{\delta}(w_0, \theta_0, h_1 + H_{1, \delta} + \nabla \wedge b_{\delta},  h_2 + H_{2, \delta}, \aleph_{\delta})|_{t=\delta} - (v_{\delta}, \delta^{-1}\vartheta_{\delta})(\cdot, 1) \longrightarrow 0,
		\end{equation*}
		uniformly with respect to $(h_1, h_2)$ from bounded subsets of~$L^2((0,T); H^{m-2} \times H^{m-1})$ and $(w_0, \theta_0)$ from bounded subsets of $\in H^{m} \times H^{m+1}(\mathbb{T}^2;\mathbb{R})$.
	\end{thrm}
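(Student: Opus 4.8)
The plan is to carry out the hydrodynamic scaling limit characteristic of the return method, organized around the reference trajectory $\overline{U}$ from \eqref{equation:Ubar}. The starting point is that, by \Cref{lemma:retf} (equivalently \eqref{equation:ovp}), $\overline{U}$ solves a controlled Euler system and is supported in $(0,1)$ in time, so the rescaled field $\overline{U}_\delta(\cdot,t)=\delta^{-1}\overline{U}(\cdot,\delta^{-1}t)$ solves on $[0,\delta]$ a controlled Euler system with localized force, its spatial average equals $A_\delta$ (integrate \eqref{equation:ovp} in space, recalling \eqref{equation:defH12}), and -- the key point -- its vorticity $\nabla\wedge\overline{U}_\delta$ solves \emph{exactly} the viscous transport equation $\partial_t(\nabla\wedge\overline{U}_\delta)-\nu\Delta(\nabla\wedge\overline{U}_\delta)+(\overline{U}_\delta\cdot\nabla)(\nabla\wedge\overline{U}_\delta)=H_{1,\delta}$, i.e.\ the $\delta$-singular force in the first component of $S_\delta(\cdots)$; this is precisely what dictates the two-term form of $H_{1,\delta}$. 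Since for each fixed $\delta$ all prescribed data lie in the spaces required by $S_\delta$ (only with $\delta$-dependent norms), the solution $(w,\theta)=S_\delta(\cdots)$ is well defined; writing $w=\nabla\wedge\overline{U}_\delta+\widetilde w$ and using linearity of $\Upsilon$ together with the average computation and $\aleph_\delta=A_\delta+B_\delta(\delta^{-1}\cdot)$ gives $u=\Upsilon(w,\aleph_\delta)=\overline{U}_\delta+\widetilde u$ with $\widetilde u=\Upsilon(\widetilde w,B_\delta(\delta^{-1}\cdot))$, and subtracting the reference-vorticity equation removes the singular force, leaving a \emph{regular} equation for $\widetilde w$ with data bounded uniformly in $\delta$.

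Next I would rescale time by $t=\delta s$, setting $\widehat w(\cdot,s)=\widetilde w(\cdot,\delta s)$, $\widehat\Theta(\cdot,s)=\theta(\cdot,\delta s)$, $\widehat u=\Upsilon(\widehat w,B_\delta)$; since $\overline{U}_\delta(\cdot,0)=\overline{U}_\delta(\cdot,\delta)=0$ one has $\widehat w(\cdot,1)=w(\cdot,\delta)$ and $\widehat\Theta(\cdot,1)=\theta(\cdot,\delta)$. A direct computation, using \eqref{equation:clpropU} and the identity $\delta\partial_1\widehat\Theta=\partial_1\vartheta_\delta+\delta\partial_1(\widehat\Theta-\delta^{-1}\vartheta_\delta)$, yields
\begin{gather*}
\partial_s\widehat w+(\overline{U}\cdot\nabla)\widehat w+(\Upsilon(\widehat w,B_\delta)\cdot\nabla)\nabla\wedge\overline{U}\\
=\partial_1\vartheta_\delta+\nabla\wedge b+\delta\bigl(\nu\Delta\widehat w+\partial_1(\widehat\Theta-\delta^{-1}\vartheta_\delta)-(\widehat u\cdot\nabla)\widehat w+h_1(\cdot,\delta\cdot)\bigr),\\
\partial_s\widehat\Theta+(\overline{U}\cdot\nabla)\widehat\Theta=\delta^{-1}\zeta_\delta+\delta\bigl(\tau\Delta\widehat\Theta-(\widehat u\cdot\nabla)\widehat\Theta+h_2(\cdot,\delta\cdot)\bigr),
\end{gather*}
with $\widehat w(\cdot,0)=w_0$, $\widehat\Theta(\cdot,0)=\theta_0$; stripping off the $\delta$-multiplied terms gives exactly the system \eqref{equation:isll} for $(v_\delta,\delta^{-1}\vartheta_\delta)$ (note $\delta^{-1}\vartheta_\delta$ solves $\partial_s(\delta^{-1}\vartheta_\delta)+(\overline{U}\cdot\nabla)(\delta^{-1}\vartheta_\delta)=\delta^{-1}\zeta_\delta$).

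Then I would establish uniform-in-$\delta$ a priori bounds: for $\delta$ small, $(\widehat w,\widehat\Theta)$ exists on all of $[0,1]$ and is bounded in $\mathcal{A}_1^{m-1}\times\mathcal{A}_1^m$, uniformly over bounded sets of $(w_0,\theta_0)$ and $(h_1,h_2)$. This uses that $\overline{U}\in W^{1,2}((0,1);C^\infty)\subset C^0([0,1];C^\infty)$ has a fixed, well-behaved flow; that $B_\delta$ is bounded, since \eqref{equation:od} makes $\delta^{-1}\vartheta_\delta$ bounded in $C^0([0,1];H^{m+1})$ and controls $\delta^{-1}\zeta_\delta$ in $L^2((0,1);H^m)$ as the force driving it; that the $\delta$-diffusion terms are kept on the left in the energy estimates, where they only help (and additionally yield $\widehat w\in L^2((0,1);H^m)$, $\widehat\Theta\in L^2((0,1);H^{m+1})$); and that the remaining $\delta$-multiplied terms are of higher order in $\widehat w$ and are absorbed by a continuation argument for small $\delta$. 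Finally I would close with error estimates: $r_\delta:=\widehat w-v_\delta$ and $q_\delta:=\widehat\Theta-\delta^{-1}\vartheta_\delta$ satisfy a coupled \emph{linear} system with zero initial data, and writing $\delta\nu\Delta\widehat w=\delta\nu\Delta r_\delta+\delta\nu\Delta v_\delta$ (likewise for $\widehat\Theta$) puts the $\delta$-diffusion on the error with favorable sign; energy estimates at level $H^{m-1}$ for $r_\delta$ and $H^m$ for $q_\delta$, using this $\delta$-smoothing to reach the target regularity from the $H^{m-2}$- resp.\ $H^{m-1}$-rough forcings at the price of $(\delta\nu)^{-1}$ resp.\ $(\delta\tau)^{-1}$, combined with the Step-3 bounds and the boundedness of $v_\delta$ in $C^0([0,1];H^m)$ and $\delta^{-1}\vartheta_\delta$ in $C^0([0,1];H^{m+1})$, give $\|r_\delta(\cdot,1)\|_{m-1}^2\leq C(\nu)\bigl(\delta+\int_0^\delta\|h_1(\cdot,t)\|_{m-2}^2\,dt\bigr)$ and $\|q_\delta(\cdot,1)\|_{m}^2\leq C(\tau)\bigl(\delta+\int_0^\delta\|h_2(\cdot,t)\|_{m-1}^2\,dt\bigr)$, which tend to $0$; since $\widehat w(\cdot,1)=w(\cdot,\delta)$ and $\widehat\Theta(\cdot,1)=\theta(\cdot,\delta)$, this is the asserted convergence.

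I expect the main obstacle to be the tension between the singular data and the target regularity. The $\delta^{-1}$-singular drift $\overline{U}_\delta$ and the $\delta^{-2}$-singular force $H_{1,\delta}$ must be \emph{exactly} absorbed by $\nabla\wedge\overline{U}_\delta$ -- which is what forces the shape of $H_{1,\delta}$ in \eqref{equation:defH12} and the use of \eqref{equation:ovp} -- and then, in the error estimates, one still has to pass from $H^{m-2}$-rough forcing (the term $h_1$, the quadratic term, etc.) to the $H^{m-1}$ conclusion, for which the only smoothing available is the $\delta$-small diffusion, costing a factor $(\delta\nu)^{-1}$. The estimate survives because every remaining bad term either carries a compensating power of $\delta$ or, for $h_1$ and $h_2$, lives on the shrinking window $[0,\delta]$ so that $\int_0^\delta\|h_i(\cdot,t)\|^2\,dt\to0$ as $\delta\to0$ (uniformly over equi-integrable families of forces, e.g.\ over the pieces of a fixed $L^2$ datum); and the regularity hypotheses $w_0\in H^m$, $\theta_0\in H^{m+1}$ are exactly what makes $v_\delta\in C^0H^m$ and $\delta^{-1}\vartheta_\delta\in C^0H^{m+1}$, so that the terms $\delta\nu\Delta v_\delta$ and $\delta\tau\Delta(\delta^{-1}\vartheta_\delta)$ become harmless.
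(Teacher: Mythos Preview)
Your proposal is correct and follows essentially the same route as the paper: subtract the singular Euler reference $\nabla\wedge\overline{U}_\delta$ exactly (which is what the two-term structure of $H_{1,\delta}$ is designed for), then compare the residual nonlinear solution with the linearized problem \eqref{equation:isll} via energy estimates on the difference, using the diffusion to absorb the rough forcings and the shrinking time window to kill the $h_1,h_2$ contributions. The only cosmetic difference is that the paper works directly on $[0,\delta]$ with unscaled diffusion $\nu$ (so no $(\delta\nu)^{-1}$ bookkeeping is needed) and subtracts both the Euler reference and the linearized solution in one ansatz, closing with the quadratic ODE comparison $\Psi_\delta'\leq C\Psi_\delta^2$, $\Psi_\delta(0)=C_\delta\to0$, rather than a separate uniform-bound step; your explicit time rescaling to $[0,1]$ is equivalent after the change of variables.
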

	\begin{proof}
		Given any fixed $\delta \in (0,\min\{1,T\})$, we denote by $(w_{\delta}, U_{\delta}, \Theta_{\delta})$ the solution to \eqref{equation:vfb} on $[0,\delta]$ issued from the initial state $(w_0, \theta_0)$, driven by $(h_1 + H_{1,\delta} + \nabla \wedge b_{\delta}, h_2 + H_{2,\delta})$, and having the velocity average~$\aleph_{\delta}$. That is,
		\begin{gather*}
			(W_{\delta}, \Theta_{\delta}) = S_{\delta}(w_0, \theta_0, h_1 + H_{1,\delta} + \nabla \wedge b_{\delta},  h_2 + H_{2,\delta}, \aleph_{\delta})
		\end{gather*}
		and $U_{\delta}(x,t) = \Upsilon(W_{\delta}, \aleph_{\delta})$. Then, in the limit $\delta \longrightarrow 0$, we study for the functions~$W_{\delta}$,~$U_{\delta}$, and $\Theta_{\delta}$ an ansatz of the form
		\begin{equation*}\label{equation:ansatz}
			\begin{gathered}
				W_{\delta} = \overline{w}_{\delta} + z_{\delta} + r_{\delta}, \quad U_{\delta} = \overline{U}_{\delta} + Z_{\delta} + \Upsilon(r_{\delta}), \quad \Theta_{\delta} = \theta_{\delta} + s_{\delta},
			\end{gathered}
		\end{equation*}
		where $r_{\delta}$ and $s_{\delta}$ denote remainders and
		\begin{gather*}
			z_{\delta} \coloneq v_{\delta}(\cdot, \delta^{-1}\cdot), \quad \overline{w}_{\delta} \coloneq \nabla \wedge \overline{U}_{\delta}, \quad
			\theta_{\delta} \coloneq \delta^{-1} \vartheta_{\delta}(\cdot, \delta^{-1} \cdot),\\
			Z_{\delta} \coloneq \Upsilon(z_{\delta}, B_{\delta}(\delta^{-1}\cdot)).
		\end{gather*}
		Since $\overline{w}_{\delta}(\cdot, \delta) = 0$ in $\mathbb{T}^2$, the proof will be complete after the following convergence is obtained:
		\begin{equation}\label{equation:gts}
			\|r_{\delta}(\cdot, \delta)\|_{m-1} + \|s_{\delta}(\cdot, \delta)\|_{m} \longrightarrow 0 \, \mbox{ as } \, \delta \longrightarrow 0.
		\end{equation}
		
		Plugging this ansatz to the equations in \eqref{equation:vfb}, while recalling \cref{equation:ovp,equation:isll,equation:defH12}, the remainder profiles are seen to satisfy in $\mathbb{T}^2\times[0,\delta]$ the problem
		\begin{equation}\label{equation:remainders}
			\begin{gathered}
				\partial_t r_{\delta} - \nu \Delta r_{\delta} + (\overline{U}_{\delta} + Z_{\delta} + \Upsilon(r_{\delta})) \cdot \nabla r_{\delta} + \Upsilon(r_{\delta})\cdot \nabla (\overline{w}_{\delta} + z_{\delta}) = \partial_1 s_{\delta} + \Xi_{\delta},\\
				\partial_t s_{\delta} - \tau \Delta s_{\delta} + (\overline{U}_{\delta} + Z_{\delta} + \Upsilon(r_{\delta})) \cdot \nabla s_{\delta} + \Upsilon(r_{\delta})\cdot \nabla \theta_{\delta}= \Lambda_{\delta},
			\end{gathered}
		\end{equation}
		with zero initial states $r_{\delta}(\cdot, 0) = s_{\delta}(\cdot, 0)$, and where
		\begin{gather*}
			\Xi_{\delta} \coloneq \nu \Delta z_{\delta} - (Z_{\delta} \cdot \nabla) z_{\delta} + h_1, \quad
			\Lambda_{\delta} \coloneq \tau \Delta \theta_{\delta} - (Z_{\delta}\cdot \nabla) \theta_{\delta} + h_2.
		\end{gather*}
		
		To underscore that all estimates are uniform with respect to initial states and prescribed forces from bounded sets, we note that $(v_{\delta}, \delta^{-1}\vartheta_{\delta})$ remains for each $\delta \in (0, \min\{1,T\})$ in a fixed bounded subset of $C^0([0,1];H^m\times H^{m+1})$ when $(w_0, \theta_0)$ vary in a fixed bounded subset of $H^{m} \times H^{m+1}$.
		Now, we multiply in \eqref{equation:remainders} with~$(-\Delta)^{m-1} r_{\delta}$ and $(-\Delta)^{m} s_{\delta}$, respectively, followed by applications of Poincar\'e's inequality and the elliptic estimate $\|\Upsilon(\phi)\|_{k} \lesssim \|\phi\|_{k-1}$ for $k \in \mathbb{N}$, where \enquote{$\lesssim$} means \enquote{$\leq C$} for a generic constant $C > 0$ independent of~$\delta$. For example, due to~\eqref{equation:od} and because $A$ and $b$ are fixed, we have
		\begin{align*}
			\operatorname{sup}\limits_{t \in [0,\delta]}\|Z_{\delta}(\cdot, t)\|_{m+1} & \lesssim \operatorname{sup}_{t \in [0,\delta]}\|z_{\delta}(\cdot, t)\|_m + |A| + \|b\|_{L^1(0,\delta); L^1(\mathbb{T}^2; \mathbb{R}^2)} + 1\\
			& \lesssim \operatorname{sup}_{t \in [0,\delta]}\|z_{\delta}(\cdot, t)\|_m + 1.
		\end{align*}
		As a result of these steps, and by temporarily using abbreviations of the type $f = f(\cdot,t)$, we obtain
		\begin{multline}\label{equation:ee}
			\frac{1}{2}\frac{d}{dt}\big(\|r_{\delta}\|_{m-1}^2 + \|s_{\delta}\|_{m}^2\big) + \nu \| r_{\delta}\|_{m}^2 + \tau \| s_{\delta} \|_{m+1}^2 \\
			\begin{aligned}
				& \lesssim \|r_{\delta}\|_{m-1}^2\big( \|z_{\delta}\|_{m}+ \|r_{\delta}\|_{m}\big) + \|\overline{U}_{\delta}\|_{m+1} \big(\|r_{\delta}\|_{m-1}^2+\|s_{\delta}\|_{m}^2\big)\\
				& \quad \, + \|r_{\delta}\|_{m-1}\big(\|\theta_{\delta}\|_{m+1}\|s_{\delta}\|_{m} +  \|s_{\delta}\|_{m}\|s_{\delta}\|_{m+1}\big) \\
				& \quad \, + (\|z_{\delta}\|_{m}+1) \big(\|r_{\delta}\|_{m-1}^2 + \|s_{\delta}\|_{m}^2\big) + \|r_{\delta}\|_{m-1}\|s_{\delta}\|_{m}\\
				& \quad \, + \|\Xi_{\delta}\|_{m-2} \|r_{\delta}\|_{m} + \|\Lambda_{\delta}\|_{m-1} \|s_{\delta}\|_{m+1}
			\end{aligned}
		\end{multline}
		for $t \in [0, \delta]$. Thanks to the continuous Sobolev embeddings $H^2(\mathbb{T}^2; \mathbb{R}^N) \hookrightarrow L^{\infty}(\mathbb{T}^2; \mathbb{R})$ and $H^1(\mathbb{T}^2; \mathbb{R}^N) \hookrightarrow L^{4}(\mathbb{T}^2; \mathbb{R}^N)$, $N \in \{1,2\}$, it follows that
		\begin{gather*}
			\|r_{\delta}\|_{m-1}^2\|r_{\delta}\|_{m}\leq \alpha\|r_{\delta}\|_{m}^2 + \alpha^{-1}C\|r_{\delta}\|_{m-1}^4,\\
			\|r_{\delta}\|_{m-1}\|\theta_{\delta}\|_{m+1}\|s_{\delta}\|_{m}
			\leq \|\theta_{\delta}\|_{m+1} \big(\|r_{\delta}\|_{m-1}^2 + \|s_{\delta}\|_{m}^2\big),\\
			\|r_{\delta}\|_{m-1} \|s_{\delta}\|_{m}\|s_{\delta}\|_{m+1} \leq \alpha \|s_{\delta}\|_{m+1}^2 + \alpha^{-1} C \big(\|r_{\delta}\|_{m-1}^4 + \|s_{\delta}\|_{m}^4\big)\\
			\|r_{\delta}\|_{m-1}\|s_{\delta}\|_{m} \leq 2^{-1}\|r_{\delta}\|_{m-1}^2 + 2^{-1}\|s_{\delta}\|_{m}^2,\\
			\|\Xi_{\delta}\|_{m-2} \leq \|z_{\delta}\|_m + \|z_{\delta}\|_m^2 + \|h_1\|_{m-2} + 1,\\
			\|\Lambda_{\delta}\|_{m-1} \leq \|\theta_{\delta}\|_{m+1} + \|z_{\delta}\|_{m}^2 + \|\theta_{\delta}\|_{m+1}^2 + \|h_2\|_{m-1},
		\end{gather*}
		where $\alpha > 0$ is small.
		The idea is then to observe that integrals involving several of the terms in \eqref{equation:ee} vanish when taking the limit $\delta\longrightarrow 0$. Indeed,
		\begin{equation*}
			\begin{gathered}
				\int_0^t \|f(\cdot, \sigma)\|_{l} \, d\sigma \leq \min \left\{ \delta \int_0^1 \|f(\cdot, \delta \sigma)\|_{l} \, d\sigma, \,  \int_0^\delta \|f(\cdot, \sigma)\|_l \, d\sigma \right\}
			\end{gathered}
		\end{equation*}
		for~any $f \in L^1((0, \delta); H^{l}(\mathbb{T}^2; \mathbb{R}))$ with~$l \geq 0$ and $t \in [0, \delta]$. Thus, recalling that $\vartheta_{\delta}(\cdot, 0) = \delta\theta_0$ together with the choice of $(\zeta_{\delta})_{\delta \in (0,\min\{1,T\})}$ in \eqref{equation:isll} yield $\sup_{t\in[0,\delta]}\|\theta_{\delta}(\cdot, t)\|_{m+1} = \mathscr{O}(1) \mbox{ as } \delta \longrightarrow 0$,
		we have the limits
		\begin{gather*}
			\lim\limits_{\delta \to 0} \int^{\delta} \left(\|\Xi_{\delta}(\cdot, \sigma)\|_{m-2} + \|\Lambda_{\delta}(\cdot, \sigma)\|_{m-1}\right) \, d\sigma = 0, \\
			\lim\limits_{\delta \to 0} \int_0^{\delta} \|\overline{U}_{\delta}(\cdot, \sigma) + Z_{\delta}(\cdot, \sigma)\|_{m+1} \, d\sigma  \leq \sup_{s \in [0, 1]} \|\overline{U}(\cdot, s)\|_{m+1},\\
			\int_0^{\delta} \|\theta_{\delta}(\cdot, \sigma)\|_{m+1}^2 \, d\sigma \leq \delta^{-1}\sup_{s \in [0,1]} \|\vartheta_{\delta}(\cdot, s)\|_{m+1}^2 = \mathscr{O}(\delta) \mbox{ as } \delta \longrightarrow 0.
		\end{gather*}
		Therefore, an application of Gr\"onwall's lemma implies the existence of a constants $(C_{\delta})_{\delta\in(0,\min\{1,T\})}$ satisfying $\lim_{\delta \to 0} C_{\delta} = 0$ and
		\begin{equation*}
			\|r_{\delta}(\cdot, t)\|_{m-1}^2 + \|s_{\delta}(\cdot, t)\|_{m}^2 \leq C_{\delta} + C\int_0^t \left(\|r_{\delta}(\cdot, \sigma)\|_{m-1}^4 + \|s_{\delta}(\cdot, \sigma)\|_{m}^4\right) \, d\sigma.
		\end{equation*}
		Thus, the function
		\[
		\Psi_{\delta}(t) \coloneq C_{\delta} + C\int_0^t \left(\|r_{\delta}(\cdot, \sigma)\|_{m-1}^4 + \|s_{\delta}(\cdot, \sigma)\|_{m}^4\right) \, d\sigma
		\]
		satisfies $d\Psi_{\delta}/dt \leq C \Psi_{\delta}^2$ and \eqref{equation:gts} follows by integrating this inequality. 
	\end{proof}
	\begin{rmrk}
		To conclude \Cref{theorem:main} and \Cref{theorem:main2}, we will apply \Cref{theorem:ctl}, and \Cref{theorem:msltc} below, only with $b = 0$, $A = 0$, $\tau = 0$, and $\smallint_{\mathbb{T}^2} \theta_0(x) \, dx = \smallint_{\mathbb{T}^2} \theta_1(x) \, dx = 0$. Regarding the relevance of other choices for these parameters, see \Cref{remark:nza2}.
	\end{rmrk}
	
	Let $\widetilde{\mathscr{F}}_{\mathscr{t}}$ be obtained by adding to $\mathscr{F}_{\mathscr{t}}$ the space spanned by the smooth cutoff~$\mu$ from \Cref{subsection:convection}. 
	
	\begin{crllr}\label{theorem:msltc}
		Let $T> 0$, $m \geq 2$, $(w_0, \theta_0, \theta_1) \in H^{m} \times H^{m+1}(\mathbb{T}^2;\mathbb{R}) \times H^{m+1}(\mathbb{T}^2;\mathbb{R})$, $b \in C^{\infty}(\mathbb{T}^2\times[0,1];\mathbb{R}^2)$, $A \in \mathbb{R}^2$, $(h_1, h_2) \in L^2((0, T); H^{m-2} \times H^{m-1})$, and $\tau \in C^{\infty}_0((0,1);\mathbb{R})$ with 
		\[
		\int_0^1\tau(s) \,ds = \int_{\mathbb{T}^2} (\theta_1-\theta_0)(x) \, dx.
		\]
		There exist $(\zeta_{\delta})_{\delta \in (0,\min\{1,T\})} \subset L^2((0, 1); \widetilde{\mathscr{F}}_{\mathscr{t}})$, with $\int_{\mathbb{T}^2} \zeta_{\delta}(x,t) \, dx = \delta\tau(t)$
		for $\delta \in (0,\min\{1,T\})$ and almost all $t \in [0,1]$, so that in~$H^{m-1} \times H^m(\mathbb{T}^2;\mathbb{R}^2)$ one has the convergence
		\begin{multline}\label{equation:limit}
			\lim\limits_{\delta \to 0} S_{\delta}\Big(w_0, \theta_0, h_1 + H_{1,\delta} + \nabla \wedge b_{\delta}, \\ h_2 + \delta^{-2}\zeta_{\delta}(\cdot, \delta^{-1}\cdot), \aleph_{\delta}\Big)\!|_{t=\delta} = (v^b(\cdot, 1), \theta_1),
		\end{multline}
		where
		\begin{gather*}
			b_{\delta}(\cdot, t) = \delta^{-1} b(\cdot, \delta^{-1}t), \quad
			\aleph_{\delta}(t) \coloneq A_{\delta}(t) + \widetilde{B}(\delta^{-1}t)
		\end{gather*}
		for $t \in [0, \delta]$ and
		\[
		\widetilde{B}(t) \coloneq A + \int_0^{t} \int_{\mathbb{T}^2} b(x,s) \, dx ds + 
		e_2 \left(\int_0^{t}\int_0^s \tau(r) \, dr ds + t \int_{\mathbb{T}^2} \theta_0(x) \, dx \right)
		\]
		for $t \in [0, 1]$, while the function $v^b$ solves
		\begin{equation}\label{equation:vb}
			\begin{gathered}
				\partial_t v^b + (\overline{U} \cdot \nabla) v^b + (\Upsilon(v^b,\widetilde{B}) \cdot \nabla) \nabla \wedge \overline{U} = \nabla \wedge b, \\
				v^{b}(\cdot, 0) = w_0.
			\end{gathered}
		\end{equation}
		The limit in \eqref{equation:limit} is uniform with respect to $(h_1, h_2)$ from bounded subsets of $L^2((0,{T}); H^{m-2} \times H^{m-1})$ and $(w_0, \theta_0)$ from bounded subsets of $\in H^{m} \times H^{m+1}(\mathbb{T}^2;\mathbb{R})$. Furthermore, if $(\theta_0, \theta_1) \in H^{m+1} \times H^{m+1}$ and $\tau = 0$, then one can use controls $(\zeta_{\delta})_{\delta \in (0,\min\{1,T\})} \subset L^2((0, 1); \mathscr{F}_{\mathscr{t}})$.
	\end{crllr}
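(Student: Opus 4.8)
The plan is to feed a carefully chosen family $(\zeta_\delta)$ into \Cref{theorem:ctl} and then conclude by a diagonal extraction over approximation accuracies. The construction is governed by two points. First, the spatial averages must be arranged so that $B_\delta$ in \eqref{equation:isll} coincides with the reference average $\widetilde{B}$ of the statement. Second --- the delicate point --- one cannot let the controllability accuracy depend on $\delta$: since \Cref{theorem:locfinthm} is merely approximate, forcing the transported temperature ever closer to $\theta_1$ requires controls of growing size, which would make $\sup_t\|\vartheta_\delta(\cdot,t)\|_{m+1}$ grow faster than $\delta$ and thereby violate hypothesis \eqref{equation:od}. I would instead build, for each fixed accuracy $1/k$, a family of controls that is \emph{linear in} $\delta$ (so that \eqref{equation:od} holds automatically) and then diagonalize in $k$.

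\emph{Average bookkeeping.} Let $\theta^\flat$ solve $\partial_t\theta^\flat+(\overline{U}\cdot\nabla)\theta^\flat=\bigl(\int_{\mathbb{T}^2}\mu\bigr)^{-1}\tau\,\mu$ with $\theta^\flat(\cdot,0)=\theta_0$; by transport regularity $\theta^\flat\in C^0([0,1];H^{m+1})$, and integrating over $\mathbb{T}^2$ (using $\operatorname{div}\overline{U}=0$ and the hypothesis $\int_0^1\tau(s)\,ds=\int_{\mathbb{T}^2}(\theta_1-\theta_0)$) gives $\int_{\mathbb{T}^2}\theta^\flat(x,t)\,dx=\int_{\mathbb{T}^2}\theta_0+\int_0^t\tau(s)\,ds$, in particular $\int_{\mathbb{T}^2}\theta^\flat(x,1)\,dx=\int_{\mathbb{T}^2}\theta_1$, so $\theta_1-\theta^\flat(\cdot,1)$ has zero average; when $\tau=0$ this is pure transport and $\theta^\flat(\cdot,1)=\theta_0$ by \eqref{equation:clpropU}. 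I would take $\zeta_\delta=\delta\bigl(\bigl(\int_{\mathbb{T}^2}\mu\bigr)^{-1}\tau\,\mu+\widehat{G}\bigr)$ with $\widehat{G}\in L^2((0,1);\mathscr{F}_{\mathscr{t}})$ of zero spatial average, so that $\zeta_\delta\in L^2((0,1);\widetilde{\mathscr{F}}_{\mathscr{t}})$ (and in $L^2((0,1);\mathscr{F}_{\mathscr{t}})$ when $\tau=0$) with $\int_{\mathbb{T}^2}\zeta_\delta=\delta\tau$, and by linearity the induced $\vartheta_\delta$ equals $\delta$ times the solution $\theta$ of $\partial_t\theta+(\overline{U}\cdot\nabla)\theta=\bigl(\int_{\mathbb{T}^2}\mu\bigr)^{-1}\tau\,\mu+\widehat{G}$, $\theta(\cdot,0)=\theta_0$. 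The same averaging identity then yields $\int_{\mathbb{T}^2}\delta^{-1}\vartheta_\delta(x,s)\,dx=\int_{\mathbb{T}^2}\theta_0+\int_0^s\tau(r)\,dr$, whence $B_\delta\equiv\widetilde{B}$ and $\aleph_\delta$ is the one in the statement; consequently $v_\delta$ from \eqref{equation:isll} solves the $v^b$-equation \eqref{equation:vb} perturbed only by the source $\partial_1\vartheta_\delta=\delta\,\partial_1\theta$.

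\emph{Layers and passage through \Cref{theorem:ctl}.} For $k\in\mathbb{N}$, use \Cref{theorem:locfinthm} and \Cref{remark:bop2} with accuracy $1/k$ and zero-average target $\theta_1-\theta^\flat(\cdot,1)$ to get $\widehat{G}^{(k)}:=\mathcal{L}_{1/k}(\theta_1-\theta^\flat(\cdot,1))\in L^2((0,1);\mathscr{F}_{\mathscr{t}})$, with corresponding solution $\theta^{(k)}$ (issued from $\theta_0$) satisfying $\|\theta^{(k)}(\cdot,1)-\theta_1\|_m\le k^{-1}\|\theta_1-\theta^\flat(\cdot,1)\|_{m+1}$. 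With $\zeta^{(k)}_\delta:=\delta\bigl(\bigl(\int_{\mathbb{T}^2}\mu\bigr)^{-1}\tau\,\mu+\widehat{G}^{(k)}\bigr)$ the induced profile is $\vartheta_\delta=\delta\,\theta^{(k)}$ with $\theta^{(k)}\in C^0([0,1];H^{m+1})$ \emph{independent of} $\delta$, so $\sup_t\|\vartheta_\delta(\cdot,t)\|_{m+1}=\delta\,\|\theta^{(k)}\|_{C^0([0,1];H^{m+1})}=\mathscr{O}(\delta)$ and \eqref{equation:od} holds for $(\zeta^{(k)}_\delta)_\delta$. \Cref{theorem:ctl} then gives, in $H^{m-1}\times H^m$ and uniformly over $(h_1,h_2),(w_0,\theta_0)$ in bounded sets, that $S_\delta\bigl(w_0,\theta_0,h_1+H_{1,\delta}+\nabla\wedge b_\delta,h_2+\delta^{-2}\zeta^{(k)}_\delta(\cdot,\delta^{-1}\cdot),\aleph_\delta\bigr)\big|_{t=\delta}-(v^{(k)}_\delta,\theta^{(k)})(\cdot,1)\to0$ as $\delta\to0$; and since $\delta\,\partial_1\theta^{(k)}\to0$ in $L^2((0,1);H^{m-1})$ while $B_\delta=\widetilde{B}$, an energy estimate for the linear problem \eqref{equation:vb} yields $v^{(k)}_\delta\to v^b$ in $C^0([0,1];H^{m-1})$. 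Hence that $S_\delta$-expression converges, as $\delta\to0$, to $(v^b(\cdot,1),\theta^{(k)}(\cdot,1))$.

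\emph{Diagonal.} Choose $\min\{1,T\}>\delta_1>\delta_2>\cdots\downarrow0$ such that, for all $\delta\in(0,\delta_k)$, the above $S_\delta$-expression lies within $1/k$ of $(v^b(\cdot,1),\theta^{(k)}(\cdot,1))$ in $H^{m-1}\times H^m$ (possible uniformly over bounded data sets by the previous step), and set $\zeta_\delta:=\zeta^{(k)}_\delta$ for $\delta\in[\delta_{k+1},\delta_k)$. For such $\delta$ one gets $\|S_\delta(\dots)|_{t=\delta}-(v^b(\cdot,1),\theta_1)\|_{H^{m-1}\times H^m}<k^{-1}+k^{-1}\|\theta_1-\theta^\flat(\cdot,1)\|_{m+1}\to0$ as $\delta\to0$, which is \eqref{equation:limit}, with all estimates uniform over bounded subsets of the data; the $L^2((0,1);\mathscr{F}_{\mathscr{t}})$-valued version is the case $\tau=0$, in which the $\mu$-layer is absent. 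I expect the main obstacle to be precisely the tension already flagged: exact convergence of the temperature to $\theta_1$ forces accuracies $\to0$, while the forcing estimate \eqref{equation:od} underlying \Cref{theorem:ctl} forbids the ensuing growth of the controls --- keeping each layer linear in $\delta$ is what dissolves it.
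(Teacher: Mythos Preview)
Your proposal is correct and follows essentially the same route as the paper: both exploit the bounded linear operator from \Cref{remark:bop2} to produce, for each fixed accuracy, a control family that is \emph{linear in} $\delta$ (so \eqref{equation:od} is automatic), then invoke \Cref{theorem:ctl} and diagonalize over accuracies. The paper handles the nonzero-average case through \Cref{remark:tempav} rather than your explicit $\theta^\flat$, and leaves the diagonal extraction implicit (it stops at ``there exists $\delta_0=\delta_0(\varepsilon)$''), but these are presentational differences only.
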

	\begin{proof}
		Let $\varepsilon > 0$. If $\tau = 0$, we apply \Cref{theorem:locfinthm} for each $\rho \in (0,\min\{1,T\})$ with initial state $\rho\theta_0$ and target state $\rho\theta_1$. In view of \Cref{remark:bop2}, this provides controls $(\zeta_{\rho})_{\rho \in (0,\min\{1,T\})} \in L^2((0,1); \mathscr{F}_{\mathscr{t}})$ such that the solution $\vartheta_{\rho}$ to the transport equation
		\begin{gather*}
			\partial_t \vartheta_{\rho} + (\overline{U} \cdot \nabla) \vartheta_{\rho} = \zeta_{\rho}
		\end{gather*}
		with initial condition $\vartheta_{\rho}(\cdot, 0) = \rho \theta_0$ satisfies
		\begin{equation}\label{equation:acetd}
			\|\vartheta_{\rho}(\cdot, 1) - \rho \theta_1\|_m < \varepsilon \rho.
		\end{equation}
		If some values of $\tau$ are nonzero, we use the version of \Cref{theorem:locfinthm} described in \Cref{remark:tempav}, which leads instead to controls $(\zeta_{\rho})_{\rho \in (0,\min\{1,T\})} \in L^2((0,1); \widetilde{\mathscr{F}}_{\mathscr{t}})$.
		According to \Cref{remark:bop2}, the family $(\zeta_{\rho})_{\rho\in(0,\min\{1,T\})}$ can be fixed such that
		\[
		\|\zeta_{\rho}\|_{L^2((0,1); H^m)} = \mathscr{O}(\rho) \mbox{ as } \rho \longrightarrow 0.
		\]
		Now, we define for $\rho \in (0,\min\{1,T\})$ the function $v_{\rho}$ as the solution to
		\begin{gather*}
			\partial_t v_{\rho} + \overline{U} \cdot \nabla v_{\rho} + \left(\Upsilon(v_{\rho}, \widetilde{B})\cdot \nabla \right) (\nabla \wedge \overline{U}) = \partial_1 \vartheta_{\rho} + \nabla \wedge b
		\end{gather*}
		with initial condition $v_{\rho}(\cdot, 0) = w_0$. Basic estimates and \Cref{remark:bop2} imply
		\[
		\sup\limits_{t \in [0,1]}\|\vartheta_{\rho}(\cdot, t)\|_{m+1} + \|v_{\rho}(\cdot, 1) - v^b(\cdot, 1)\|_{m} = \mathscr{O}(\rho) \mbox{ as } \rho \longrightarrow 0.
		\]
		Thus, by combining \Cref{theorem:ctl} and \eqref{equation:acetd}, there exists $\delta_0 = \delta_0(\varepsilon) > 0$ such that one has for all $\delta \in (0, \delta_0)$ the estimate
		\[
		\|(w_{\delta},\theta_{\delta})(\cdot,\delta) - (v^b(\cdot, 1), \theta_1)\|_{H^{m-1}\times H^m(\mathbb{T}^2;\mathbb{R}^2)} < \varepsilon,
		\]
		where
		\[
		(w_{\delta},\theta_{\delta}) \coloneq S_{\delta}\Big(w_0, \theta_0, h_1 + H_{1,\delta} + \nabla \wedge b_{\delta}, h_2 + \delta^{-2}\zeta_{\delta}(\cdot, \delta^{-1}\cdot), \aleph_{\delta}\Big).
		\]
	\end{proof}

	\Cref{theorem:main} follows now from the choice of controls $(\zeta_{\delta})_{\delta \in (0,\min\{1,T\})} \subset L^2((0, 1); \mathscr{F}_{\mathscr{t}})$ made via \Cref{theorem:msltc} for $w_0 = \nabla \wedge u_0$, $h_1 = \nabla \wedge f$, $h_2 = g$, $b=0$, $A = 0$, $\tau = 0$, and $\smallint_{\mathbb{T}^2}\theta_0(x)\,dx = \smallint_{\mathbb{T}^2}\theta_1(x)\,dx = 0$. Indeed, \Cref{lemmea:timeperiodicht} implies in this case that $v^b(\cdot, 1) = w_0$ for the solution $v^b$ to \eqref{equation:vb}. Moreover,
	the solution to~\eqref{equation:Boussinesq} driven by the controls
	\begin{equation}\label{equation:ctrlfrm}
		\begin{gathered}
			\xi(\cdot, t) = \delta^{-2} \overline{H}(\cdot, \delta^{-1}t) - \delta^{-1}\Delta \overline{U}(\cdot, \delta^{-1}t), \\
			\eta(\cdot, t) = \delta^{-2} \zeta_{\delta}(\cdot, \delta^{-1}t)
		\end{gathered}
	\end{equation}
	is for $t \in [0,\delta]$ given by $(u_{\delta}, \theta_{\delta})$, where $u_{\delta} = \Upsilon(w_{\delta}, A_{\delta})$ is the velocity associated with $(w_{\delta}, \theta_{\delta}) = S_{\delta}(w_0, \theta_0, H_{1,\delta}, h_2 + \delta^{-2}\zeta_{\delta}(\cdot, \delta^{-1}\cdot), A_{\delta})$.
	Since $A_{\delta}(\delta) = 0$, the approximate controllability of $u_{\delta}$ follows from \Cref{theorem:msltc} by using the elliptic estimate
	\[
	\|u_{\delta}(\cdot, \delta) - u_0\|_m \lesssim \|w_{\delta}(\cdot, \delta) - w_0\|_{m-1}.
	\]
	
	In view of \eqref{equation:ctrlfrm}, we can now determine possible choices for $\mathscr{F}_{\mathscr{t}}$  and $\mathscr{F}_{\mathscr{v}}$.  The space $\mathscr{F}_{\mathscr{t}}$ in~\Cref{theorem:main} can be taken as the $(D_{\omegaup} + 8)$-dimensional one from \Cref{theorem:locfinthm}, recalling that $D_{\omegaup}$ is fixed in terms of the universal choice of~$\overline{y}$ via \Cref{lemma:retf}. 
	Regarding the space $\mathscr{F}_{\mathscr{v}}$, from \Cref{lemma:retf} and the representation $\overline{U}(x,t) = \sum_{i=1}^{D_{\omegaup}+4} u_i(t) U_i(x)$ in \eqref{equation:repUbarfd}, we know that $t \mapsto \overline{U}(\cdot, t)$ and $t\mapsto\Delta\overline{U}(\cdot, t)$ can be chosen as curves in possibly different but at most $(D_{\omegaup}+4)$-dimensional spaces. Moreover, for $q$ satisfying $\partial_t \overline{U} + (\overline{U}\cdot\nabla)\overline{U} = \nabla q$ in a neighborhood of $\mathbb{T}^2\setminus \omegaup$, we can take $\overline{P} = -(1-\chi) q$, with $\chi$ from \Cref{subsection:convection}, and fix $\overline{H} = \partial_t \overline{U} + (\overline{U}\cdot\nabla)\overline{U} + \nabla \overline{P}$. Therefore, the dimension of~$\mathscr{F}_{\mathscr{v}}$ can be kept below or equal to
	\[
		D_{\mathscr{v}} \coloneq 12 + 3D_{\omegaup} + \frac{(D_{\omegaup} + 4)^2-4-D_{\omegaup}}{2},
	\]
	where the contribution $4 + D_{\omegaup} + 2^{-1}((D_{\omegaup} + 4)^2-4-D_{\omegaup})$	to the above sum arises from grouping the terms arising when expanding the expression
	\[
		\left(\sum_{i=1}^{D_{\omegaup} + 4} u_i(t) U_i(x)\cdot\nabla\right)\sum_{i=1}^{D_{\omegaup} + 4} u_i(t) U_i(x)
	\]
	with respect to the common factors $u_i(t)u_i(t)$ and $u_j(t) u_l(t) = u_l(t)u_j(t)$ with $i,j,l \in \{1, \dots, D_{\omegaup} + 4\}$ and $l\neq j$. The pressure $\overline{P}$ in the above choice of $\overline{H}$ does not affect the dimension of $\mathscr{F}_{\mathscr{v}}$, as one may factor common control coefficients. Thus, by plugging \eqref{equation:repUbarfd} into \eqref{equation:ovp}, one can represent~$\overline{H}$ as
	\begin{equation*}\label{equation:etarep}
		\begin{gathered}
			\overline{H}(x,t) + \Delta \overline{U}(x,t) = \sum_{i=1}^{D_{\mathscr{v}}} w_i(t) W_i(x),
		\end{gathered}
	\end{equation*}
	for coefficients $w_1, \dots, w_{D_{\mathscr{v}}} \in L^2((0,1); \mathbb{R})$
	and universal profiles $W_1, \dots, W_{D_{\mathscr{v}}} \in C^{\infty}(\mathbb{T}^2; \mathbb{R}^2)$ supported in $\omegaup$. 
	
	\begin{rmrk}\label{remark:exampley2}
		If $\mathbb{T}^2\setminus \omegaup$ is simply-connected, instead of using the proof of \Cref{lemma:retf} one can define the profile $\overline{y}$ appearing in \eqref{equation:Ubar} as a function only of time ({\it c.f.}~\Cref{remark:exampley}). In that case, $D_{\omegaup} = 2$. As a result, the spaces $\mathscr{F}_{\mathscr{v}}$ and $\mathscr{F}_{\mathscr{t}}$ are independent of the choice of $\omegaup$ within the class of open sets $\omegaup$ for which $\mathbb{T}^2\setminus \omegaup$ is simply-connected; see \Cref{subsection:ce} for an explicit list of functions that span these spaces.
	\end{rmrk}

	\appendix
	
	\section{Proof of \texorpdfstring{\Cref{theorem:main2}}{the second main result}}\label{subsection:prfmain2}
	This appendix recalls a strategy from \cite{NersesyanRissel2024}, rendering \Cref{theorem:main2} as a consequence of \Cref{theorem:main}. Hereto, let $m \in \mathbb{N}$, which corresponds to $k \geq 2$ in \Cref{theorem:main2}. If $k \in \{0,1\}$, the statement follows from a density argument, as one can approximate the target states by smooth functions.
	
	\subsubsection*{Step 1. Stability} Due to the well-posedness of \eqref{equation:vfb} in the considered spaces, there exists a small time $\sigma > 0$ such that for each $\delta \in [0,\sigma]$ and $(a,b) \in H^{m}\times H^{m+1}$ with $\|(a,b)-(w_0, \theta_1)\|_{H^{m}\times H^{m+1}} < \varepsilon/2$,
	one has
	\[
	\|S_{\delta}(a,b,\nabla \wedge f(T-\delta+\cdot), g(T-\delta+\cdot), 0) |_{t = \delta} - (w_0, \theta_1)\|_{H^{m}\times H^{m+1}} < \varepsilon.
	\]
	
	\subsubsection*{Step 2. Regularization}  On a time interval $[0, T-\delta_0]$, where $\delta_0 \in (0,\sigma]$ will be selected below and~$\sigma$ is the number from the previous step, all controls are set to zero. Then, the smoothing effects of the considered viscous and thermally diffusive Boussinesq system imply that the solution $(u, \theta)$ to \eqref{equation:Boussinesq} in the Leray-Hopf class, with initial data $(u_0, \theta_0)$ and forces $(f, g)$, belongs to the space
	\[
	C^0((0,T-\delta_0];H^{m+1}(\mathbb{T}^2;\mathbb{R}^2)\times H^{m+1}) \cap L^2((0, T-\delta_0); H^{m+2}(\mathbb{T}^2;\mathbb{R}^2)\times H^{m+2}).
	\]
	As a result, one has
	\[
	(u,\theta)(\cdot, t) \in V^{m+2}\times H^{m+2}
	\]
	for almost all $t\in(0, T-\delta_0)$, which implies
	\[
	(\widetilde{w}_0, \widetilde{\theta}_0) \coloneq (\nabla\wedge u, \theta)(\cdot, T - \delta_0) \in H^{m+1} \times H^{m+2}
	\]
	for a well-chosen $\delta_0 \in (0,\sigma]$, which is fixed from now on. We refer also to \cite{Chaves-SilvaEtal2023,Temam1997,Temam2001,Temam1982} regarding Leray-Hopf type solutions and smoothing effects, presented for the more challenging case of domains with boundaries.  Moreover, since the goal is to prove approximate controllability, we can without loss of generality assume that the targets $u_1$ and $\theta_1$ are smooth. 
	
	\subsubsection*{Step 3. Control strategy} As the following mechanism is known from \cite{NersesyanRissel2024}, we simplify here the presentation by assuming $f = g = 0$ in \eqref{equation:Boussinesq}. Moreover, for the resolving operator $S_T$ of \eqref{equation:vfb} with time $T > 0$, we abbreviate $S_T(\cdot, \cdot) = S_T(\cdot, \cdot, 0, 0, 0)$. 
	The next theorem states two scaling limits from \cite{NersesyanRissel2024}. Similar results have been obtained in \cite{BoulvardGaoNersesyan2023} for the primitive equations.
	\begin{thrm}{\cite[Theorem 3.4]{NersesyanRissel2024}}\label{theorem:lcst}
		Let~$k \in \mathbb{N}$, $T$, zero average $q \in C^{\infty}(\mathbb{T}^2; \mathbb{R})$, and $(w_0, \theta_0) \in H^{k+1} \times H^{k+2}$. Further, denote by $\Pi_1 (w, \theta) \coloneq w$ the projection to the vorticity component of a solution $(w,\theta)$ to \eqref{equation:vfb}. Then, as $\delta \longrightarrow 0$, the limits
		\begin{gather}\label{equation:lsc1}
			\Pi_1 S_{\delta}(w_0 , \theta_0 - \delta^{-1} q) |_{t=\delta}  \longrightarrow w_0 - \partial_1q,\\
			S_{\delta}(w_0 + \delta^{-1/2}q, \theta_0) |_{t=\delta} - (\delta^{-1/2}q, 0) \longrightarrow (w_0 - (\Upsilon(q)  \cdot  \nabla) q, \theta_0)\label{equation:lsc2}
		\end{gather}
		hold in $H^{k}$ and $H^{k}\times H^{k+1}$, respectively.
	\end{thrm}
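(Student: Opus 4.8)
The plan is to prove both scaling limits by a regular-perturbation analysis performed after rescaling time and amplitude, in the same spirit as the return-method computation in the proof of \Cref{theorem:ctl}. In each case, after the rescaling the problem lives on the fixed time interval $[0,1]$, the explicit leading-order profile is read off from the formal $\delta\to0$ limit, and the deviation from it is estimated by a Gr\"onwall-type energy argument. Throughout, the well-posedness of \eqref{equation:vfb} recalled in \Cref{section:conclusion} provides the a priori bounds, and the regularity thresholds $(w_0,\theta_0)\in H^{k+1}\times H^{k+2}$ are precisely those under which, after an integration by parts, the (weak) dissipation in the rescaled equations can absorb the diffusive forcing; this is also why the limits are asserted one derivative below the data, in $H^{k}$ and $H^{k}\times H^{k+1}$.

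For \eqref{equation:lsc1}, put $(w_\delta,\theta_\delta)\coloneq S_\delta(w_0,\theta_0-\delta^{-1}q)$ and introduce, for $t\in[0,1]$, the rescaled functions $\widehat W_\delta(x,t)\coloneq w_\delta(x,\delta t)$ and $\widehat\Theta_\delta(x,t)\coloneq\delta\,\theta_\delta(x,\delta t)$; a direct computation shows they solve on $\mathbb{T}^2\times(0,1)$ the system $\partial_t\widehat W_\delta-\delta\nu\Delta\widehat W_\delta+\delta(\widehat U_\delta\cdot\nabla)\widehat W_\delta=\partial_1\widehat\Theta_\delta$, $\widehat U_\delta=\Upsilon(\widehat W_\delta)$, together with $\partial_t\widehat\Theta_\delta-\delta\tau\Delta\widehat\Theta_\delta+\delta(\widehat U_\delta\cdot\nabla)\widehat\Theta_\delta=0$, and the initial data $\widehat W_\delta(\cdot,0)=w_0$, $\widehat\Theta_\delta(\cdot,0)=\delta\theta_0-q$. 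As $\delta\to0$ this is a regular perturbation of $\partial_t\widehat W=\partial_1\widehat\Theta$, $\partial_t\widehat\Theta=0$, whose solution is $\widehat\Theta\equiv-q$ and $\widehat W(t)=w_0-t\,\partial_1q$; evaluated at $t=1$ this gives $w_\delta(\cdot,\delta)=\widehat W_\delta(\cdot,1)\to w_0-\partial_1q$. To make the argument rigorous I would write $\widehat W_\delta=w_0-t\,\partial_1q+r_\delta$, $\widehat\Theta_\delta=-q+p_\delta$ with $r_\delta(\cdot,0)=0$, $p_\delta(\cdot,0)=\delta\theta_0$, and run an energy estimate for $(r_\delta,p_\delta)$ on the fixed interval $[0,1]$: all inhomogeneities are $\mathscr{O}(\delta)$ and the dissipation — of size exactly $\delta\nu$, $\delta\tau$ — absorbs the top-order terms, so Gr\"onwall's lemma yields $\|r_\delta(\cdot,1)\|_{k}+\|p_\delta(\cdot,1)\|_{k+1}\to0$. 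Only the vorticity appears in \eqref{equation:lsc1} because $\theta_\delta(\cdot,\delta)=\delta^{-1}\widehat\Theta_\delta(\cdot,1)$, being of order $\delta^{-1}$, does not converge.

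For \eqref{equation:lsc2} the same scheme applies, but now with the critical square-root scaling: with $(w_\delta,\theta_\delta)\coloneq S_\delta(w_0+\delta^{-1/2}q,\theta_0)$ one sets $\widehat W_\delta(x,t)\coloneq\delta^{1/2}w_\delta(x,\delta t)$ and $\widehat\Theta_\delta(x,t)\coloneq\delta^{3/2}\theta_\delta(x,\delta t)$, which solve $\partial_t\widehat W_\delta-\delta\nu\Delta\widehat W_\delta+\delta^{1/2}(\widehat U_\delta\cdot\nabla)\widehat W_\delta=\partial_1\widehat\Theta_\delta$ and $\partial_t\widehat\Theta_\delta-\delta\tau\Delta\widehat\Theta_\delta+\delta^{1/2}(\widehat U_\delta\cdot\nabla)\widehat\Theta_\delta=0$ with data $\widehat W_\delta(\cdot,0)=\delta^{1/2}w_0+q$, $\widehat\Theta_\delta(\cdot,0)=\delta^{3/2}\theta_0$. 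At leading order $\widehat W_\delta\to q$, $\widehat\Theta_\delta\to0$; writing $\widehat W_\delta=q+\delta^{1/2}R_\delta$, $\widehat\Theta_\delta=\delta^{3/2}(\theta_0+\sigma_\delta)$ with $R_\delta(\cdot,0)=w_0$, $\sigma_\delta(\cdot,0)=0$, and collecting the $\mathscr{O}(\delta^{1/2})$ balance, one finds $\partial_tR_\delta\to-(\Upsilon(q)\cdot\nabla)q$, because the quadratic self-interaction of the large mode, $\delta^{1/2}(\widehat U_\delta\cdot\nabla)\widehat W_\delta=\delta^{1/2}(\Upsilon(q)\cdot\nabla)q+\mathscr{O}(\delta)$, contributes exactly at the order of the correction — a matching that is forced by the exponent $1/2$, since a smaller power of $\delta$ would make this contribution disappear and a larger one would make it diverge. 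Integrating gives $R_\delta(\cdot,1)\to w_0-(\Upsilon(q)\cdot\nabla)q$, hence $w_\delta(\cdot,\delta)-\delta^{-1/2}q=\delta^{-1/2}(\widehat W_\delta(\cdot,1)-q)=R_\delta(\cdot,1)\to w_0-(\Upsilon(q)\cdot\nabla)q$ in $H^{k}$, while $\sigma_\delta$ is driven by $\mathscr{O}(\delta^{1/2})$ terms so $\theta_\delta(\cdot,\delta)=\theta_0+\sigma_\delta(\cdot,1)\to\theta_0$ in $H^{k+1}$. I expect the main obstacle to be the energy estimate for $(R_\delta,\sigma_\delta)$: the dissipation coefficients $\delta\nu$, $\delta\tau$ degenerate, so they cannot be used for smoothing, only to soak up the equally small $\mathscr{O}(\delta)$ diffusive forcings, and the transport and commutator terms carrying the coefficient $\delta^{1/2}(\widehat U_\delta\cdot\nabla)$ must be controlled by Gr\"onwall on $[0,1]$ with constants depending on $\|q\|_{k+2}$ and the bounded data norms; one must verify that the accumulation of these $\delta^{1/2}$-factors does not spoil the bounds, which — as in the proof of \Cref{theorem:ctl} — works because each such term is linear in the remainder and carries a positive power of $\delta$. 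Tracking the constants through this estimate yields the stated uniformity over $(w_0,\theta_0)$ in bounded sets.
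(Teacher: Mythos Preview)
Your proposal is correct and follows essentially the same route as the argument sketched in the paper's Remark following \Cref{theorem:lcst} (which outlines the proof from \cite{NersesyanRissel2024}): isolate an explicit leading profile, write the solution as profile plus remainder, and close an energy/Gr\"onwall estimate for the remainder in which the degenerate dissipation $\delta\nu,\delta\tau$ is used only to absorb the equally small diffusive forcings coming from the data $w_0,\theta_0$.

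The only organizational differences are: (i) you rescale time to $[0,1]$ before expanding, whereas the paper works directly on $[0,\delta]$ with $\delta^{-1}t$-factors; these are equivalent reparametrizations. (ii) For \eqref{equation:lsc1} the paper's ansatz carries an additional explicit corrector $-\delta^{-1}t\tau\Delta q-\delta^{-1}t(Q_\delta\cdot\nabla)q$ with $Q_\delta=\Upsilon(w_0-\tfrac{1}{2}\delta^{-1}t\,\partial_1q)$ in the temperature, and for \eqref{equation:lsc2} an extra $\delta^{-1/2}t\nu\Delta q$ in the vorticity; after your rescaling these correctors are $\mathscr{O}(\delta)$ (resp.\ $\mathscr{O}(\delta^{1/2})$) and hence are absorbed into your $p_\delta$ (resp.\ $R_\delta$) without changing the conclusion---this works precisely because $q$ is smooth, so the associated source terms can be estimated directly in $H^k$ without appealing to the dissipation. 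The paper's more detailed ansatz buys a slightly cleaner remainder equation; your simpler ansatz buys a shorter setup. Either way, the decisive estimate is the same coupled $H^k\times H^{k+1}$ energy inequality with Gr\"onwall on the unit interval.
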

	\begin{rmrk}
		Lets us briefly describe the idea employed in \cite{NersesyanRissel2024} for proving \Cref{theorem:lcst}. Regarding \eqref{equation:lsc1}, the following ansatz is made as $\delta\longrightarrow0$:
		\begin{multline*}
			S_{\delta}(w_0, \theta_0 - \delta^{-1}q)(x,t) + (0, \delta^{-1}q(x)) = \\ \begin{bmatrix}
				w_0(x) \\ \theta_0(x) 
			\end{bmatrix} - \begin{bmatrix}
				\delta^{-1} t \partial_1 q(x)\\
				\delta^{-1}t \tau \Delta q(x) + \delta^{-1}t  (Q_{\delta}(x,t) \cdot \nabla) q(x)
			\end{bmatrix} + R_{\delta}(x,t),
		\end{multline*}
		where $R_{\delta}(x,t)$ denotes a supposedly small remainder term and
		\[
		Q_{\delta}(x,t) \coloneq \Upsilon\left(w_0 - \frac{\delta^{-1} t \partial_1 q}{2}\right)(x)
		\]
		for $(x,t) \in \mathbb{T}^2\times[0,\delta]$. The equation satisfied by $R_{\delta}$ is obtained by plugging the above ansatz into the Boussinesq system, and via energy estimates it is seen that $R_{\delta}(\cdot, \delta)$ vanishes in $H^k\times H^{k+1}$ as $\delta \longrightarrow 0$. Concerning \eqref{equation:lsc2}, the ansatz used in \cite{NersesyanRissel2024} is of the form
		\begin{multline*}
			S_{\delta}(w_0 + \delta^{-1/2}q, \theta_0)(x,t) - (\delta^{-1/2}q(x), 0) = \\
			\begin{bmatrix}
				w_0(x) \\ \theta_0(x)
			\end{bmatrix} - \begin{bmatrix}
				\delta^{-1} t \left(\Upsilon(q)(x)\cdot\nabla\right)q(x) - \delta^{-1/2} t \nu \Delta q(x) \\ 0
			\end{bmatrix} + R_{\delta}(x,t)
		\end{multline*}
		for $(x,t) \in \mathbb{T}^2\times[0,\delta]$. Also in this case, the remainder $R_{\delta}(\cdot, \delta)$ is seen to vanish in $H^k\times H^{k+1}$ as $\delta \longrightarrow 0$. Finding the right ansatz is the crucial step; the remainder estimates for both limits are then rather straight forward and similar to those in the proof of \Cref{theorem:ctl}.
	\end{rmrk}
	
	The convergence results of \Cref{theorem:lcst} will be combined with the fact that $\mathscr{E}$ from \eqref{equation:E} contains $\pm\sin(x\cdot n)$ and $\pm\cos(x\cdot n)$ for all $n\in \mathbb{Z}^2\setminus\{0\}$. Let us recall that
	\begin{equation*}
		\begin{gathered}
			\mathscr{E} = \left\{ q_0 + \left(\Upsilon(q_1) \cdot \nabla\right) q_1 + \left(\Upsilon(q_2) \cdot \nabla\right) q_2 \, \, | \, \, q_0,q_1,q_2\in \operatorname{span}_{\mathbb{R}}\mathscr{E}_0 \right\},
		\end{gathered}
	\end{equation*}
	where $\mathscr{E}_0$ collects all functions $\sin(x\cdot n)$ and $\cos(x \cdot n)$ for $n \in \mathbb{N}\times\mathbb{N}_0$. In particular, there exists an integer $N \geq 0$ and
	\[
	q_0, q_1, \dots, q_{2N} \in \operatorname{span}_{\mathbb{R}}\mathscr{E}_0, \quad  Q_0, Q_1, \dots, Q_{2N}\in C^{\infty}(\mathbb{T}^2; \mathbb{R})
	\]
	such that 
	\begin{equation}\label{equation:W1g}
		\|W_1 - w_1 \|_{m} < \frac{\varepsilon}{3},
	\end{equation}
	where
	\[
	W_1 \coloneq \widetilde{w}_0 - q_0 - \sum_{i=1}^{2N}(\Upsilon(q_i) \cdot \nabla) q_i
	\]
	and
	\[
	q_0 = \partial_1 Q_0, \quad q_1 = \partial_1 Q_1, \quad \dots, \quad q_{2N} = \partial_1 Q_{2N}.
	\]
	
	Starting a trajectory at time $T_0 = T - \delta_0$ from the state $(\widetilde{w}_0, \widetilde{\theta}_0)$, the following steps i)-iv) demonstrate how the vorticity in the Boussinesq system \eqref{equation:vfb} can be steered faster than $(T-T_0)/(10N+1)$, and up to any small error~$\overline{\varepsilon} > 0$ with respect to the $H^{m}$-norm, to the value
	\[
	\widetilde{w}_0 - q_0 - (\Upsilon(q_1) \cdot \nabla) q_1.
	\]
	Thanks to \eqref{equation:W1g}, by repeating the below argument $(2N-1)$-times and choosing~$\overline{\varepsilon}$, one can build a piece-wise (in time) defined trajectory so that the associated vorticity reaches $W_1$ in $H^m$ up to any prescribed error $\widetilde{\varepsilon}$. After a final application of \Cref{theorem:msltc} to steer the temperature in $H^{m+1}$ as close to $\theta_1$ as required, the proof is complete.

	i) For any $\varepsilon_1 > 0$, we take $0 < \delta_3 < (T-T_0)/(10N+1)$ so small that \eqref{equation:lsc2} implies
	\begin{equation*}
		\|S_{\delta_3}\left(\widetilde{w}_0 - \delta_3^{-1/2} q_1, \widetilde{\Theta}\right)\!|_{t=\delta_3} - (\widetilde{w}_0-\delta_3^{-1/2}q_1-(\Upsilon(q_1)\cdot\nabla)q_1, \widetilde{\Theta})\|_{H^{m}\times H^{m+1}} < \varepsilon_1,
	\end{equation*}
	where $\widetilde{\Theta}$ denotes a fixed element of $H^{m+1}$ ({\it e.g.}, choose $\widetilde{\Theta} = 0$). 
	
	ii) We apply \eqref{equation:lsc1} and \Cref{theorem:msltc}, the latter with target temperature $\widetilde{\Theta}$, in order to fix a small $0 < \delta_2 < (T-T_0)/(10N+1)$ of the form $\delta_2 = \delta_{2,1} + \delta_{2,2}$ and a control $\zeta^0 \in L^2((0, 1); \mathscr{F}_{\mathscr{t}})$ such that
	\begin{multline*}
		\|S_{\delta_3}\left(\widetilde{S}_{\delta_2}\left(\widetilde{w}_0, \widetilde{\theta}_0 - \delta_{2,1}^{-1}\delta_3^{-1/2}Q_1\right)\!|_{t=\delta_2}\right)\!|_{t=\delta_3}  \\ - (\widetilde{w}_0-\delta_3^{-1/2}q_1-(\Upsilon(q_1)\cdot\nabla)q_1, \widetilde{\Theta})\|_{H^{m}\times H^{m+1}} < \varepsilon_1,
	\end{multline*}
	denoting
	\begin{multline*}
		\widetilde{S}_{\delta_2}(A,B) \coloneq S_{\delta_2}\left(A,B,\mathbb{I}_{[\delta_{2,1},\delta_{2,2}]}H_{1,\delta_{2,2}}(\cdot,\cdot-\delta_{2,1}), \right. \\ \left. \delta_{2,2}^{-2}\mathbb{I}_{[\delta_{2,1},\delta_{2,2}]}\zeta^0(\cdot, \delta_{2,2}^{-1}(\cdot - \delta_{2,1})), \mathbb{I}_{[\delta_{2,1},\delta_{2,2}]}A_{\delta_{2,2}}(\cdot-\delta_{2,1})\right).
	\end{multline*}
	iii) By an application of \Cref{theorem:msltc} with target temperature $\widetilde{\theta}_0 - \delta_{2,1}^{-1}\delta_3^{-1/2}Q_1$, we fix $\zeta^1 \in L^2((0, 1); \mathscr{F}_{\mathscr{t}})$ and $\delta_1 < (T-T_0)/(10N+1)$ so small that
	\begin{multline*}
		\|S_{\delta_3}\left(\widetilde{S}_{\delta_2}\left(S_{\delta_1}\left(\widetilde{w}_0, \widetilde{\theta}_0, H_{1,\delta_1}, \delta_{1}^{-2}\zeta^1(\cdot, \delta_{1}^{-1}\cdot), A_{\delta_1}\right)\!|_{t=\delta_1}\right)\!|_{t=\delta_2}\right)\!|_{t=\delta_3} \\ - (\widetilde{w}_0-\delta_3^{-1/2}q_1-(\Upsilon(q_1)\cdot\nabla)q_1, \widetilde{\Theta})\|_{H^{m}\times H^{m+1}} < \varepsilon_1.
	\end{multline*}
	
	Now, for any given $\overline{\varepsilon} > 0$, we select the number $\varepsilon_1 > 0$ used in the steps above (hence, we fix a choice of $\delta_1 = \delta_1(\delta_2, \delta_3)$, $\delta_2 = \delta_2(\delta_3)$, and $\delta_3$) and determine $0 < \delta_5 < (T-T_0)/(10N+1)$ via \eqref{equation:lsc1} such that
	\begin{multline*}
		\|\Pi_1 S_{\delta_5}\left(\widetilde{w}_0-\delta_3^{-1/2}q_1-(\Upsilon(q_1)\cdot\nabla)q_1, \delta_5^{-1}(\delta_3^{-1/2} Q_1-Q_0)\right)\!|_{t=\delta_5} \\- (\widetilde{w}_0 - q_0 - (\Upsilon(q_1)\cdot\nabla)q_1)\|_{H^{m}\times H^{m+1}} < \overline{\varepsilon}.
	\end{multline*}

	iv) By applying again \Cref{theorem:msltc}, we select $0 < \delta_4 < (T-T_0)/(10N+1)$ and $\zeta^2 \in L^2((0, 1); \mathscr{F}_{\mathscr{t}})$ such that
	\begin{multline*}
		\|\Pi_1 S_{\delta_5}\left(S_{\delta_4}\left(S_3, H_{1,\delta_4}, + \delta_{4}^{-2}\zeta^2(\cdot, \delta_{4}^{-1}\cdot), A_{\delta_4}\right)\!|_{t=\delta_4}\right)\!|_{t=\delta_5} \\- (\widetilde{w}_0 - q_0 - (\Upsilon(q_1)\cdot\nabla)q_1, \widetilde{\Theta})\|_{H^{m}\times H^{m+1}} < \overline{\varepsilon},
	\end{multline*}
	where
	\[
	S_3 \coloneq S_{\delta_3}\left(\widetilde{S}_{\delta_2}\left(S_{\delta_1}\left(\widetilde{w}_0, \widetilde{\theta}_0, H_{1,\delta_1}, \delta_{1}^{-2}\zeta^1(\cdot, \delta_{1}^{-1}\cdot), A_{\delta_1}\right)\!|_{t=\delta_1} \right)\!|_{t=\delta_2}\right)\!|_{t=\delta_3}.
	\]
	The so-obtained controls for the velocity and temperature are zero on the union of time intervals
	\begin{equation*}\label{equation:ti1}
		\begin{gathered}
			\left[T_0+\delta_1, T_0+\delta_1+\delta_{2,1}\right],\\ \left[T_0+\delta_1+\delta_{2}, T_0 + \sum_{l=1}^3\delta_l\right], \quad \left[T_0 + \sum_{l=1}^4\delta_l, T_0 + \sum_{l=1}^5\delta_l\right],
		\end{gathered}
	\end{equation*}
	while assuming possibly nonzero values in the respective spaces $\mathscr{F}_{\mathscr{v}}$ and $\mathscr{F}_{\mathscr{t}}$ on the union of the time intervals
	\begin{equation*}\label{equation:ti2}
		\begin{gathered}
			\left[T_0, T_0+\delta_1\right], \quad [T_0+\delta_1+\delta_{2,1}, T_0+\delta_1+\delta_2], \quad \left[T_0 + \sum_{l=1}^3\delta_l, T_0 + \sum_{l=1}^4\delta_l\right].
		\end{gathered}
	\end{equation*}

	\begin{rmrk}\label{remark:nza2}
		The proof of \Cref{theorem:main2} extends to initial- and target states with nonzero average. Hereto, one has to add two short average control stages at the beginning and at the end of \enquote{Step 3. Control strategy}. To illustrate this, let us observe that the velocity and temperature averages of solutions to~\eqref{equation:Boussinesq} (with body forces of zero average) behave formally like
		\begin{gather*}
			\int_{\mathbb{T}^2} u(x,t) \, dx = \int_{\mathbb{T}^2} u_0(x) \, dx + \int_0^t \int_{\mathbb{T}^2} \theta(x,s) e_2 \, dx ds + \int_0^t \int_{\omegaup} \xi(x,s)\, dx ds,\\
			\int_{\mathbb{T}^2} \theta(x,t) \, dx = \int_{\mathbb{T}^2} \theta_0(x) \, dx + \int_0^t \int_{\omegaup} \eta(x,s) \, dx ds
		\end{gather*}
		for $t \in [0, T]$. 
		
		To begin with, suppose that $u_0$ and $u_1$ are of average $A_0 = (A_{0,1}, A_{0,2}) \in \mathbb{R}^2$ and $A_1 = (A_{1,1}, A_{1,2}) \in \mathbb{R}^2$, respectively. Further, assume that the averages of~$\theta_0$ and~$\theta_1$ are~$\tau_0 \in \mathbb{R}$ and~$\tau_1 \in \mathbb{R}$, respectively.
		Then, fix two vector fields $\mathscr{a}, \mathscr{b} \in C^{\infty}(\mathbb{T}^2; \mathbb{R}^2)$ with $\operatorname{supp}(\mathscr{a}) \cup \operatorname{supp}(\mathscr{b}) \subset \omegaup$ and
		\[
			\int_{\mathbb{T}^2} \mathscr{a}(x) \, dx = (1, 0), \quad
			\int_{\mathbb{T}^2} \mathscr{b}(x) \, dx = (0, 1).
		\]
		Moreover, choose profiles $\lambda_{0,1}, \lambda_{0,2}, \lambda_{1,1}, \lambda_{1,2}, r_0, r_1 \in C^{\infty}_0((0,1);\mathbb{R})$ such that
		\begin{gather*}
			\int_0^1 \lambda_{0,1}(s) \, ds = -A_{0,1}, \quad \int_0^1 \lambda_{1,1}(s) \, ds = A_{1,1}, \\
			\int_0^1 r_0(s) \, ds = -\tau_0, \quad \int_0^1 \lambda_{0,2}(s) \, ds + \int_0^1 \int_0^t r_0(s) \, ds dt = -A_{0,2} - \tau_0, \\ 
			\int_0^1 r_1(s) \, ds = \tau_1, \quad \int_0^1 \lambda_{1,2}(s) \, ds + \int_0^1 \int_0^t r_1(s) \, ds dt = A_{1,2}
		\end{gather*}
		
		Now, the above proof of \Cref{theorem:main2} is adapted as described in the following points. 
		
		1) Since the arguments in \enquote{Step 1. Stability} and \enquote{Step 2. Regularization} likewise work for initial data with nonzero average, no significant changes are made there: the velocity and temperature equations with zero average body forces and zero controls preserve the averages of the initial data until the time $T-\delta_0$. 
		
		2) Before starting with \enquote{Step 3. Control strategy}, we apply \Cref{theorem:msltc} with $b = \lambda_{0,1} \mathscr{a} + \lambda_{0,2} \mathscr{b}$, $A = A_0$, $\tau = r_0$, and zero target temperature. 
		Notably, the velocity and temperature averages are steered exactly to zero by this preliminary application of \Cref{theorem:msltc}; see also \Cref{remark:tempav}. 
		
		3) The original target vorticity $w_1$ in \enquote{Step 3. Control strategy} is replaced by $w_{A_1} = \widetilde{v}_b(\cdot, 0)$, where $\widetilde{v}_b$ solves backwards in time the problem \eqref{equation:vb} with prescribed endpoint $v^b(\cdot, 1) = w_1$ and $b = \lambda_{1,1} \mathscr{a} + \lambda_{1,2} \mathscr{b}$. 
		
		4) At the end of \enquote{Step 3}, we insert another application \Cref{theorem:msltc}, now with $b = \lambda_{1,1} \mathscr{a} + \lambda_{1,2} \mathscr{b}$, $A = 0$, $\tau = r_1$, zero initial temperature average, and desired target temperature.
	\end{rmrk}

	\subsubsection*{Acknowledgements}
	The author would like to thank Vahagn Nersesyan for insightful discussions and for suggesting an improved description of the control spaces.
	
	\subsubsection*{Data availability statement}
	Data sharing is not applicable to this article as no datasets were generated or analyzed during the current study.
	
	\subsubsection*{Declarations}
	The author declares that he has no conflicts of interest.

	\bibliographystyle{alpha}
	\bibliography{Bib}

\end{document}